\documentclass[a4paper,12pt]{article}
\usepackage[english]{babel}
\usepackage{tikz}
\usetikzlibrary{matrix,arrows,decorations.markings}
\usepackage{amsmath,amsfonts,amssymb,amsthm,url,textcomp}
\usepackage{csquotes}
\usepackage{a4wide}
\usepackage[numbers]{natbib}
\usepackage{fancyhdr}
\pagestyle{fancy}
\lhead{Alexander Bors}
\rhead{Large automorphism orbits}
\usepackage{anyfontsize}
\usepackage{hyperref}
\usepackage{hhline}
\usepackage{leftidx}

\allowdisplaybreaks

\newtheorem{theorem}{Theorem}\numberwithin{theorem}{section}

\newtheorem{proposition}[theorem]{Proposition}
\newtheorem{notation}[theorem]{Notation}

\newtheorem{question}[theorem]{Question}
\newtheorem{problem}[theorem]{Problem}
\newtheorem{theoremm}{Theorem}\numberwithin{theoremm}{subsection}
\newtheorem{deffinition}[theoremm]{Definition}
\newtheorem{lemmma}[theoremm]{Lemma}

\newtheorem{nottation}[theoremm]{Notation}
\newtheorem{propposition}[theoremm]{Proposition}

\numberwithin{theoremmm}{subsubsection}

\theoremstyle{remark}

\newtheorem{remmark}[theoremm]{Remark}

\newcommand{\Rad}{\operatorname{Rad}}
\newcommand{\Aut}{\operatorname{Aut}}
\newcommand{\Alt}{\mathcal{A}}
\newcommand{\PSL}{\operatorname{PSL}}

\newcommand{\lcm}{\operatorname{lcm}}

\newcommand{\ord}{\operatorname{ord}}
\newcommand{\Sym}{\mathcal{S}}

\newcommand{\C}{\operatorname{C}}

\newcommand{\Soc}{\operatorname{Soc}}

\newcommand{\id}{\operatorname{id}}
\newcommand{\N}{\operatorname{N}}

\newcommand{\Out}{\operatorname{Out}}
\newcommand{\e}{\mathrm{e}}

\newcommand{\PGL}{\operatorname{PGL}}

\newcommand{\PSU}{\operatorname{PSU}}
\newcommand{\PSp}{\operatorname{PSp}}

\newcommand{\D}{\operatorname{D}}

\renewcommand{\O}{\mathcal{O}}

\newcommand{\IN}{\mathbb{N}}

\newcommand{\Sp}{\operatorname{Sp}}

\newcommand{\IF}{\mathbb{F}}
\newcommand{\Stab}{\operatorname{Stab}}

\newcommand{\IZ}{\mathbb{Z}}
\renewcommand{\o}{\operatorname{o}}

\newcommand{\MAOL}{\operatorname{MAOL}}
\newcommand{\maol}{\operatorname{maol}}
\newcommand{\h}{\mathfrak{h}}
\newcommand{\bcpc}{\operatorname{bcpc}}
\newcommand{\type}{\operatorname{type}}
\newcommand{\IR}{\mathbb{R}}
\newcommand{\MCS}{\operatorname{MCS}}
\newcommand{\Inndiag}{\operatorname{Inndiag}}
\newcommand{\IP}{\mathbb{P}}
\newcommand{\PGU}{\operatorname{PGU}}
\newcommand{\POmega}{\operatorname{P}\Omega}
\newcommand{\GO}{\operatorname{GO}}
\newcommand{\PGO}{\operatorname{PGO}}
\newcommand{\Outdiag}{\operatorname{Outdiag}}
\newcommand{\CT}{\operatorname{CT}}
\newcommand{\supp}{\operatorname{supp}}
\newcommand{\pcal}{\mathfrak{p}}

\begin{document}

\title{Finite groups with a large automorphism orbit}

\author{Alexander Bors\thanks{School of Mathematics and Statistics, University of Western Australia, Crawley 6009, WA, Australia. \newline E-mail: \href{mailto:alexander.bors@uwa.edu.au}{alexander.bors@uwa.edu.au} \newline The author is supported by the Austrian Science Fund (FWF), project J4072-N32 \enquote{Affine maps on finite groups}. \newline 2010 \emph{Mathematics Subject Classification}: Primary: 20D45. Secondary: 20D05, 20D60, 60C05. \newline \emph{Key words and phrases:} Finite groups, Automorphism orbits, Composition factors.}}

\date{\today}

\maketitle

\abstract{We study the nonabelian composition factors of a finite group $G$ assumed to admit an $\Aut(G)$-orbit of length at least $\rho|G|$, for a given $\rho\in\left(0,1\right]$. Our main results are the following: The orders of the nonabelian composition factors of $G$ are then bounded in terms of $\rho$, and if $\rho>\frac{18}{19}$, then $G$ is solvable. On the other hand, for each nonabelian finite simple group $S$, there is a constant $c(S)\in\left(0,1\right]$ such that $S$ occurs with arbitrarily large multiplicity as a composition factor in some finite group $G$ having an $\Aut(G)$-orbit of length at least $c(S)|G|$.}

\section{Introduction}\label{sec1}

\subsection{Motivation and main results}\label{subsec1P1}

The notion of an automorphism as a formalization of a \enquote{symmetry} of an object is ubiquitous in mathematics, and quite a bit of research across mathematical disciplines is concerned with studying \enquote{highly symmetric} objects $X$, which usually means at least that the natural action of the automorphism group $\Aut(X)$ on $X$ is transitive, though often, even more than that is assumed. As examples, we mention vertex-transitive graphs \cite[Definition 4.2.2, p.~85]{BW79a} and Cayley graphs \cite[Definition 5.3.1 and Theorem 5.3.2, pp.~115f.]{BW79a} from graph theory, block-transitive \cite{CP93a,CP93b} and flag-transitive \cite{Hub09a} designs from combinatorics, and flag-transitive finite projective planes \cite{Tha03a} from geometry.

In trying to adapt these studies of \enquote{highly symmetric} objects $X$ to the case where $X$ is a finite group $G$, one encounters the fundamental problem that unless $G$ is trivial, the action of $\Aut(G)$ on $G$ is intransitive. It is therefore necessary to weaken the symmetry condition on $G$, and a natural way to do so is by only assuming that a certain positive proportion of the elements of $G$ (instead of \emph{all} of $G$) is contained in an $\Aut(G)$-orbit. This motivates the study of the following concepts:

\begin{deffinition}\label{maolDef}
Let $G$ be a finite group.

\begin{enumerate}
\item We set $\MAOL(G):=\max_{g\in G}{|g^{\Aut(G)}|}$, the \emph{(absolute) maximum automorphism orbit length of $G$}.
\item We set $\maol(G):=\frac{1}{|G|}\MAOL(G)\in\left(0,1\right]$, the \emph{relative maximum automorphism orbit length of $G$}, or the \emph{maximum automorphism orbit proportion of $G$}.
\end{enumerate}
\end{deffinition}

Our main results, listed in Theorem \ref{mainTheo} below, are concerned with the nonabelian composition factors of finite groups $G$ which satisfy $\maol(G)\geq\rho$ for some given $\rho\in\left(0,1\right]$.

\begin{theoremm}\label{mainTheo}
The following hold:

\begin{enumerate}
\item A finite group $G$ with $\maol(G)>\frac{18}{19}$ is solvable.
\item There is a function $f:\left(0,1\right]\rightarrow\left(0,\infty\right)$ such that for all $\rho\in\left(0,1\right]$ and each finite group $G$ with $\maol(G)\geq\rho$, the orders of the nonabelian composition factors of $G$ are at most $f(\rho)$.
\item For each nonabelian finite simple group $S$, there is a constant $c(S)\in\left(0,1\right]$ such that for each positive integer $N$, there is a finite group $G$ with $\maol(G)\geq\rho$ and such that $S$ occurs as a composition factor of $G$ with multiplicity at least $N$.
\end{enumerate}
\end{theoremm}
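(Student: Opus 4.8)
The plan is, given $N$, to fix a prime $p\geq N$ and take
$G=G_N:=S\wr C_p=S^p\rtimes C_p$, the wreath product in which a chosen generator $\sigma$ of $C_p$ acts on $S^p$ by cyclically permuting the $p$ copies of $S$. Identifying $S^i$ with the subgroup of $S^p$ supported on the first $i$ coordinates, the subnormal chain $1\trianglelefteq S\trianglelefteq S^2\trianglelefteq\cdots\trianglelefteq S^p\trianglelefteq G$ has all successive quotients simple ($S$ with multiplicity $p$, then $C_p$), so $S$ is a composition factor of $G$ with multiplicity $p\geq N$. It therefore remains to bound $\maol(G)$ below by a positive constant depending only on $S$; I will show that $c(S):=\tfrac{1}{2|S|}$ works (this is necessarily less than $1$, in accordance with part (1), since $G$ is nonsolvable).

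The element I would test is $g:=(1,\sigma)\in S^p\rtimes C_p$. The first step is the routine computation of the centralizer: the condition that $(w,\sigma^j)$ commute with $g$ forces $w$ to be fixed by the $p$-cycle $\sigma$, hence constant, so $\C_G(g)=\{((s,\dots,s),\sigma^j):s\in S,\ 0\leq j<p\}$ has order $|S|\,p$, whence $|g^G|=|S|^{p-1}$; note also that every element of $g^G$ has $C_p$-component exactly $\sigma$. Replacing $\sigma$ by any other generator $\sigma^a$ of $C_p$, the same computation gives $|(1,\sigma^a)^G|=|S|^{p-1}$, with all elements of that class having $C_p$-component $\sigma^a$.

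The crux is that $g^{\Aut(G)}$ is roughly $p$ times larger than $g^G$. For $a\in(\IZ/p\IZ)^{\times}$, index the $p$ copies of $S$ by $\IZ/p\IZ$ and let $\pi_a$ be the automorphism of $S^p$ permuting the coordinates by multiplication by $a$ on $\IZ/p\IZ$. Since multiplication by $a$ conjugates the cyclic shift $i\mapsto i+1$ to its $a$-th power $i\mapsto i+a$, one checks that $\theta_a\colon(w,\sigma^k)\mapsto(\pi_a(w),\sigma^{ak})$ is a homomorphism $G\to G$; it is bijective (inverse $\theta_{a^{-1}}$), hence an automorphism, and $\theta_a(g)=(1,\sigma^a)$. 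Consequently $g^{\Aut(G)}$ contains $\bigcup_{a\in(\IZ/p\IZ)^{\times}}(1,\sigma^a)^G$, which is a disjoint union (the $C_p$-components are pairwise distinct) of $p-1$ conjugacy classes each of size $|S|^{p-1}$. Hence $\MAOL(G)\geq(p-1)|S|^{p-1}$, and therefore
\[
\maol(G)\ \geq\ \frac{(p-1)|S|^{p-1}}{p\,|S|^p}\ =\ \frac{p-1}{p\,|S|}\ \geq\ \frac{1}{2|S|}.
\]

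Everything above is elementary, so the execution has no genuinely hard step; the obstacle is the construction itself. A naive attempt with $S\wr C_n$ for arbitrary $n$, or with $S\wr\Sym_n$, or with a direct power $S^n$, fails: the permutation part of $\C_G(g)$ — or, for $S^n$, the spreading of any element across many distinct $\Aut(G)$-orbit classes — costs a factor growing with the multiplicity that conjugation alone cannot compensate. Taking $n=p$ prime is exactly what makes $\Aut(C_p)$, of order $p-1$, large enough that the automorphisms $\theta_a$ enlarge the orbit by the factor $p-1$ needed to cancel the extra $p$ in the denominator, leaving a fixed proportion $\approx 1/|S|$.
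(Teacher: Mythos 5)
Your argument establishes part~(3), and does so correctly: the wreath product $G=S\wr C_p$ with $p\geq N$ prime, the element $g=(1,\sigma)$, the computation $|\C_G(g)|=p\,|S|$, and the explicit automorphisms $\theta_a$ (which indeed are bijective endomorphisms, as $\pi_a\circ\sigma^k=\sigma^{ak}\circ\pi_a$) together give $\maol(G)\geq\tfrac{p-1}{p|S|}\geq\tfrac{1}{2|S|}$, and $S$ occurs as a composition factor with multiplicity $p$. This is essentially the paper's construction for part~(3) with a minor variation: the paper takes $H_p=\Aut(S)\wr C_p$ and uses an element $(\alpha_1,1,\ldots,1)\sigma$ with $\alpha_1$ chosen in a maximal $\Aut(\Aut(S))$-orbit, obtaining the larger and asymptotically sharp bound $\maol(H_p)\geq(1-\tfrac1p)\maol(\Aut(S))$, whereas you take the base group to be $S$ itself and the tuple part to be trivial, which is simpler to analyze but yields only $c(S)=\tfrac{1}{2|S|}$. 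Both work; the paper additionally invokes $\Aut(H_p)=\Aut(S)\wr\N_{\Sym_p}(\langle\sigma\rangle)$ and self-centralization of $\langle\sigma\rangle$ in $\Sym_p$, while you sidestep this by exhibiting the $\theta_a$ directly, which is cleaner for a lower bound.

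However, the statement you were asked to prove has three parts, and you only address part~(3). Parts~(1) and~(2) --- solvability of $G$ when $\maol(G)>\tfrac{18}{19}$, and the boundedness of orders of nonabelian composition factors in terms of $\maol(G)$ --- are where nearly all of the paper's work is located, and your proposal contains no idea at all for them. They require: a reduction to finite semisimple groups $H$ with $\Soc(H)\cong S^n$ via characteristic quotients (Lemma~\ref{mainLem1}); a combinatorial description of conjugacy in wreath products yielding the bound $\maol(H)\leq\prod r(M_l^\tau)$ by a multinomial-probability count (Lemma~\ref{mainLem2}); an analytic bound $r\leq\max_i\rho_i$ on multinomial probability mass (Lemma~\ref{mainLem3}); and, crucially, a CFSG-dependent bound $\h(S)\leq\tfrac{18}{19}$ on the maximum density of an $\Aut(S)$-conjugacy class inside an $S$-coset (Lemma~\ref{mainLem4}), plus (for part~(2)) an extra argument controlling the field-twist parameter $f$ by producing many distinct power-orbits (Proposition~\ref{fProp} using $\CT(\vec\alpha)$). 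None of this is suggested by the construction in your proposal, so as a proof of the full theorem it is incomplete.
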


\begin{remmark}\label{mainRem}
Some remarks concerning Theorem \ref{mainTheo}:

\begin{enumerate}
\item The constant $\frac{18}{19}$ in Theorem \ref{mainTheo}(1) is probably not optimal; see the concluding remarks (Section \ref{sec4}, Proposition \ref{smallExProp} and Question \ref{ques1}) for the conjectured optimal value.
\item By Theorem \ref{mainTheo}(3), in general, the multiplicities of the nonabelian composition factors $S$ of a finite group $G$ satisfying $\maol(G)\geq\rho$ cannot be bounded in terms of $\rho$, even if the bound is allowed to depend on $S$ as well. This is in sharp contrast to \cite[Theorem 1.1.1(3)]{Bor17a}, which implies that if there is a cyclic subgroup of $\Aut(G)$ admitting an orbit on $G$ of length at least $\rho|G|$, then the index $[G:\Rad(G)]$, where $\Rad(G)$ is the \emph{solvable radical} of $G$ (the largest solvable normal subgroup of $G$), is bounded in terms of $\rho$ (which is equivalent to having a bound in terms of $\rho$ on both the orders and the multiplicities of the nonabelian composition factors of $G$).
\item We also note that in contrast to \cite[Theorem 1.1.1(1)]{Bor17a}, there is no $\rho\in\left(0,1\right)$ such that a finite group $G$ with $\maol(G)>\rho$ is necessarily abelian, since (as can be easily checked) for each prime $p>2$, the nonabelian group of order $p^3$ and exponent $p$ admits an automorphism orbit of proportion $1-\frac{1}{p}$.
\end{enumerate}
\end{remmark}

\subsection{Overview of the paper}\label{subsec1P2}

Each subsection of Section \ref{sec2} is dedicated to the proof of one of four lemmas that will be used for proving Theorem \ref{mainTheo}. Lemma \ref{mainLem1} from Subsection \ref{subsec2P1} serves to essentially reduce the problem to bounding automorphism orbit proportions in finite groups $H$ such that $S^n\leq H\leq\Aut(S^n)$ for some nonabelian finite simple group $S$ and some positive integer $n$. Such bounds are then obtained in Lemma \ref{mainLem2} from Subsection \ref{subsec2P2}. They are products of values of the probability mass functions of certain multinomial distributions, and Lemma \ref{mainLem3} from Subsection \ref{subsec2P3} provides an upper bound on these factors, which in turn reduces the proof of Theorem \ref{mainTheo}(1) and of most of Theorem \ref{mainTheo}(2) to bounding, for each nonabelian finite simple group $S$, the maximum proportion $\h(S)$ (see also Notation \ref{hNot}) of an $\Aut(S)$-conjugacy class in one of the cosets of $S$ in $\Aut(S)$. Using the classification of the finite simple groups, such bounds are obtained in Lemma \ref{mainLem4} from Subsection \ref{subsec2P4}.

Section \ref{sec3} then consists of the actual proof of Theorem \ref{mainTheo}, and in Section \ref{sec4}, we conclude the paper with some interesting open questions and problems for further research.

\subsection{Notation and terminology}\label{subsec1P3}

We denote by $\IN$ the set of natural numbers (including $0$) and by $\IN^+$ the set of positive integers. The element-wise image of a set $M$ under a function $f$ is denoted by $f[M]$, and the identity funtion on $M$ by $\id_M$. The symbol $\phi$ denotes Euler's totient function, $\e$ denotes Euler's constant, and for a ring $R$, $R^{\ast}$ denotes the group of units of $R$. For a prime power $q$, the finite field with $q$ elements is denoted by $\IF_q$, and its algebraic closure by $\overline{\IF_q}$. The greatest common divisor of $a,b\in\IN^+$ is denoted by $\gcd(a,b)$ or simply by $(a,b)$ if there is no risk of confusing it with the ordered pair of $a$ and $b$ (such as in the formulas for outer automorphism group orders of simple Lie type groups in the proof of Lemma \ref{mainLem4}).

The symmetric and alternating groups of degree $m$ will be denoted by $\Sym_m$ and $\Alt_m$ respectively. The term \emph{characteristic quotient (of a group)} means \enquote{quotient by a characteristic subgroup}. If $G$ is a group and $g$ is an element or subgroup of some group $H\geq G$, then we denote by $\C_G(g)$ resp.~$\N_G(g)$ the centralizer resp.~normalizer of $g$ in $G$. When $\sigma$ is a permutation on a finite set $M$ and $O\subseteq M$ is an orbit of the natural action of $\langle\sigma\rangle$ on $M$, we view $O$ as the support of the associated cycle $\zeta$ from the representation of $\sigma$ as a disjoint product of cycles, using the notation $\supp(\zeta):=O$. Note that this differs slightly from the usual notion of the support of a permutation, under which $1$-cycles would have empty support. For a finite group $G$, we denote by $\Soc(G)$ the \emph{socle of $G$} (the subgroup of $G$ generated by the minimal nontrivial normal subgroups of $G$), by $\zeta G$ the center of $G$, and by $\MCS(G)$ the minimum size of an element centralizer in $G$. An automorphism $\alpha$ of a nonabelian finite simple group $S$ is called \emph{out-central} if and only if its image under the canonical projection $\Aut(S)\rightarrow\Out(S)$ lies in $\zeta\Out(S)$. For an element $\vec{g}=(g_1,\ldots,g_n)\sigma$ of the wreath product $G\wr Sym_n=G^n\rtimes\Sym_n$, we call $(g_1,\ldots,g_n)\in G^n$ the \emph{tuple part} and $\sigma\in\Sym_n$ the \emph{permutation part of $\vec{g}$ (with respect to the fixed wreath product decomposition of $G\wr\Sym_n$)}.

As for our notation of the finite simple groups of Lie type, we follow the approach taken in \cite[Section 3, pp.~104f.]{Har92a}, so that $\leftidx{^t}{X_r(p^{f\cdot t})}$, where the pre-superscripted $t$ is usually omitted if it is $1$, denotes $O^{p'}(X_r(\overline{\IF_p})_{\sigma})$, the subgroup of the simple Chevalley group (i.e., simple linear algebraic group of adjoint type) $X_r(\overline{\IF_p})$ generated by the $p$-elements of the fixed point subgroup of the Lang-Steinberg map (\enquote{Frobenius map} in the terminology of \cite[p.~104]{Har92a}) $\sigma$ on $X_r(\overline{\IF_p})$, and the relationship between $\sigma$ and the parameters $t=t(\sigma)$ and $f=f(\sigma)$ is as follows: Let $B$ be any $\sigma$-invariant Borel subgroup of $X_r(\overline{\IF_p})$, and let $T$ be any $\sigma$-invariant maximal torus of $X_r(\overline{\IF_p})$ contained in $B$. Then $t$ is the unique smallest positive integer (independent of the choice of $B$ and $T$) such that the $t$-th power of the map $\sigma^{\ast}$ on the character group $X(T)$ induced by $\sigma$ is a positive integral multiple of $\id_{X(T)}$, and $f\in\IN^+/2=\{\frac{1}{2},1,\frac{3}{2},\ldots\}$ is such that $\sigma^{\ast}=p^f\sigma_0$ with $\sigma_0^t=\id_{X(T)}$; $f$ also does not depend on the choice of $B$ and $T$. So $p^f=q(\sigma)$ in the notation of \cite{Har92a}, which is also a notation we will be using, and $f\in\IN^+$ unless $\leftidx{^t}{X_r(p^{f\cdot t})}$ is one of the Suzuki or Ree groups, in which case $f$ is half of an odd positive integer. For us, a \emph{finite simple group of Lie type} is by definition any group of the form $\leftidx{^t}X_r(p^{ft})$, even if it is not a simple group (such as $A_1(2)$). We say that $\leftidx{^t}X_r(p^{ft})$ is of \emph{untwisted Lie rank} $r$; with a few small exceptions (such as $A_1(7)\cong A_2(2)$), each finite simple group of Lie type has precisely one untwisted Lie rank. In the context of finite simple groups of Lie type, the terms \enquote{graph automorphism}, \enquote{field automorphism} and \enquote{graph-field automorphism} (the last meaning \enquote{product of a field and a graph automorphism}) and the associated notations $\Phi_S$ and $\Gamma_S$ are used as explained in \cite[p.~105]{Har92a}. Moreover, as in \cite{GLS98a}, $\Inndiag(S)$ denotes the inner diagonal automorphism group of $S$ (so $\Inndiag(\leftidx{^t}X_r(p^{f\cdot t}))\cong X_r(\overline{\IF_p})_{\sigma}$ in the above notation), and $\Outdiag(S)$, the \emph{outer diagonal automorphism group of $S$}, is the image of $\Inndiag(S)$ under the canonical projection $\Aut(S)\rightarrow\Out(S)$. As in \cite[Theorem 2.5.12(b), p.~58]{GLS98a}, we also view $\Phi_S$ and $\Gamma_S$ as subsets of $\Out(S)$, depending on the context. When $\alpha\in\Aut(S)$ (resp.~$\alpha\in\Out(S)$), then as stated in \cite[p.~105]{Har92a}, $\alpha$ admits a unique factorization into an element of $\Inndiag(S)$ (resp.~$\Outdiag(S)$), an element of $\Phi_S$ and an element of $\Gamma_S$, and we call these the \emph{inner diagonal component} (resp.~\emph{outer diagonal component}), \emph{field component} and \emph{graph component of $\alpha$}, respectively. The product of the field and graph component of $\alpha$ is also called the \emph{graph-field component of $\alpha$}.

When $P$ is some statement, we denote by $\delta_{[P]}$ the \emph{Kronecker delta} value associated with $P$, which is $1$ if $P$ is true and $0$ otherwise. So, for example, in Case 4 in the proof of Lemma \ref{mainLem4}, where $S$ is the simple Lie type group $G_2(p^f)$, the formula $|\Out(S)|=(1+\delta_{[p=3]})f$ means that $|\Out(S)|=2f$ if $p=3$, and $|\Out(S)|=f$ otherwise.

Throughout the paper, we will use the adjective \enquote{semisimple} in two different contexts: On the one hand, by a \emph{semisimple group}, we mean a group with no nontrivial solvable normal subgroups, which for finite groups $G$ is equivalent to the triviality of their solvable radical $\Rad(G)$; for more information on these groups including facts that will be tacitly used throughout the paper, see \cite[pp.~89ff.]{Rob96a} and \cite[Lemma 1.1]{Ros75a} (the latter implies the important fact that for any finite semisimple group $H$, $\N_{\Aut(\Soc(H))}(H)$ is isomorphic to $\Aut(H)$ via its conjugation action on $H$). On the other hand, for a finite simple group of Lie type $S=\leftidx{^t}{X_r(p^{f\cdot t})}$, viewed as a subgroup of a simple Chevalley group $\overline{S}=X_r(\overline{\IF_p})$, a \emph{semisimple automorphism of $S$} is an inner diagonal automorphism of $S$ whose order is coprime to the defining characteristic $p$ of $S$ (and thus can be seen as the conjugation on $S$ by a suitable semisimple element of $\N_{\overline{S}}(S)\leq\overline{S}$.

Finally, we note that we will be using the following asymptotic notations: Let $M$ be a set, let $f,g,\pcal:M\rightarrow\left(0,\infty\right)$ and let $x$ be a variable ranging over $M$. Then we write

\begin{itemize}
\item \enquote{$f(x)\in\O(g(x))$ as $\pcal(x)\to\infty$.} for \enquote{There exist constants $C_1,C_2>0$ such that for all $x\in M$ with $\pcal(x)\geq C_1$, the inequality $f(x)\leq C_2\cdot g(x)$ holds.}.
\item \enquote{$f(x)\in\Omega(g(x))$ as $\pcal(x)\to\infty$.} for \enquote{$g(x)\in\O(f(x))$ as $\pcal(x)\to\infty$.}.
\item \enquote{$f(x)\in\o(g(x))$ as $\pcal(x)\to\infty$.} for \enquote{For every $\epsilon>0$, there is a constant $C=C(\epsilon)>0$ such that for all $x\in M$ with $\pcal(x)\geq C$, the inequality $f(x)\leq \epsilon g(x)$ holds.}
\item \enquote{$f(x)\in\omega(g(x))$ as $\pcal(x)\to\infty$.} for \enquote{$g(x)\in\o(f(x))$ as $\pcal(x)\to\infty$}.
\end{itemize}

\section{Four lemmas for proving Theorem \ref{mainTheo}}\label{sec2}

\subsection{Reduction to semisimple groups with characteristically simple socle}\label{subsec2P1}

We start with the following simple lemma, which allows us to infer that if there exists any finite group $G$ with $\maol(G)\geq\rho$ and having the nonabelian finite simple group $S$ as a composition factor, then actually $\maol(H)\geq\rho$ for some nontrivial finite semisimple group $H$ with socle a power of $S$:

\begin{lemmma}\label{mainLem1}
Let $G$ be a finite group.

\begin{enumerate}
\item If $N$ is a characteristic subgroup of $G$, then $\maol(G/N)\geq\maol(G)$.
\item If $S$ is a nonabelian composition factor of $G$, then $G$ has a characteristic quotient $H$ which is semisimple and satisfies $\Soc(H)\cong S^n$ for some $n\in\IN^+$.
\end{enumerate}
\end{lemmma}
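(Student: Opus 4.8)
For part (1), the plan is to exploit the natural surjection $\pi: G \twoheadrightarrow G/N$ and the fact that, since $N$ is characteristic, every $\alpha \in \Aut(G)$ induces an automorphism $\bar\alpha \in \Aut(G/N)$, giving a homomorphism $\Aut(G) \to \Aut(G/N)$. For any $g \in G$, applying $\pi$ to the orbit $g^{\Aut(G)}$ lands inside $(\pi(g))^{\Aut(G/N)}$; since $\pi$ is at most $|N|$-to-one, this yields $|g^{\Aut(G)}| \le |N| \cdot |(\pi(g))^{\Aut(G/N)}|$. Choosing $g$ to realize $\MAOL(G)$ gives $\MAOL(G) \le |N| \cdot \MAOL(G/N)$, and dividing by $|G| = |N|\cdot|G/N|$ gives $\maol(G) \le \maol(G/N)$. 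This is the routine, easy half.

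For part (2), I would argue as follows. Let $R = \Rad(G)$ be the solvable radical; since $S$ is a nonabelian composition factor of $G$ and solvable groups have no such factors, $S$ is a composition factor of the semisimple group $G/R$, and $R$ is characteristic in $G$, so I may replace $G$ by $G/R$ and assume $G$ is semisimple, i.e.\ $\Rad(G) = 1$. Then $\Soc(G)$ is a direct product $T_1 \times \cdots \times T_k$ of nonabelian simple groups, and the conjugation action of $G$ permutes the $T_i$; the $G$-orbits on $\{T_1,\ldots,T_k\}$ group the factors into "isotypic blocks", and the product of the $T_i$ in a single orbit is a characteristic subgroup of $\Soc(G)$, hence normal in $G$. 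The plan is to show that at least one of these blocks — call it $M \cong S^n$, the product over a $G$-orbit of simple factors all isomorphic to $S$ — actually exists, i.e.\ that $S$ appears among the composition factors of $\Soc(G)$. This follows because in a semisimple group every nonabelian composition factor already occurs in the socle: $\C_G(\Soc(G)) = 1$, so $G$ embeds into $\Aut(\Soc(G)) = \prod_i \Aut(T_i)$ permuted appropriately, and $\Aut(\Soc(G))/\Soc(G)$-overhead contributes no new nonabelian composition factors beyond those among the $T_i$ and the (solvable) outer automorphism pieces — more carefully, $\Out(\Soc(G))$ is built from $\Out(T_i)$ (solvable, by Schreier) and permutation groups $\Sym_m$ acting on isotypic blocks, whose only nonabelian composition factors are alternating groups $\Alt_m$ that themselves appear as $T_i$'s when $m \ge 5$...

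Actually the cleanest route: since $S$ is a composition factor of $G$ and $G \le \Aut(\Soc(G)) = \prod \N_{\Sym}(\ldots)$... let me instead take $M$ to be the product of \emph{all} simple factors $T_i \cong S$ of $\Soc(G)$; this $M$ is characteristic in $\Soc(G)$, hence normal in $G$, and I claim $M \ne 1$. If $M = 1$ then no composition factor of $\Soc(G)$ is isomorphic to $S$; but then in the normal series $1 \trianglelefteq \Soc(G) \trianglelefteq \C_G(\Soc(G))\Soc(G) = \Soc(G) \trianglelefteq G$ with top quotient $G/\Soc(G) \hookrightarrow \Out(\Soc(G))$, every nonabelian composition factor of $G$ is either a composition factor of $\Soc(G)$ (not $S$, by assumption) or of $\Out(\Soc(G))$, whose structure (extension of $\prod\Out(T_i)$ by a subgroup of $\prod \Sym_{m_j}$, the $m_j$ counting isomorphic factors) forces any nonabelian composition factor $\cong S$ to force $S \cong \Alt_{m_j}$ for some $m_j \ge 5$ — but then $\Soc(G)$ would have $m_j \ge 5$ factors isomorphic to $\Alt_{m_j} \cong S$, contradicting $M = 1$. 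Hence $M = S^n$ with $n \ge 1$. Finally, $H := G / \C_G(M)$ is a characteristic quotient (as $M$ is characteristic in $G$, so is its centralizer), it is semisimple (any solvable normal subgroup would centralize $M$, being... this needs the standard fact that a minimal normal subgroup of a semisimple group is nonabelian — cleanest is to note $\Soc(H) = \Soc(G/\C_G(M))$ and use that $\C_G(M) \cap \Soc(G) = \prod_{T_i \not\cong S} T_i$), and $\Soc(H) \cong M \cong S^n$.

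The main obstacle is this last bookkeeping in part (2): verifying that the characteristic quotient $H$ one extracts genuinely has socle exactly $S^n$ and is semisimple, which requires knowing that in the semisimple group $G$ one has $\C_G(\Soc(G)) = 1$ and that the socle decomposes as a direct product of the minimal normal subgroups (each a power of a simple group), together with the Schreier-type observation that no \emph{new} copy of the nonabelian simple $S$ can be manufactured purely from outer-automorphism data unless enough copies of $S$ were already present in the socle. All of these are standard facts about finite semisimple groups (see \cite[pp.~89ff.]{Rob96a}), so the proof should be short once the right characteristic subgroup — the product of all socle factors isomorphic to $S$ — is identified.
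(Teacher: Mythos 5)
Your proof of part (1) is correct and is essentially the paper's argument verbatim. The issue is in part (2).

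The central claim you rely on — that in a finite semisimple group $G$, every nonabelian composition factor of $G$ already appears as a composition factor of $\Soc(G)$ — is false, and the ``Schreier-type observation'' you invoke to justify it does not hold. The flaw is in the step where you say a nonabelian composition factor of $\Out(\Soc(G))$ isomorphic to $\Alt_{m_j}$ would force $\Soc(G)$ to contain $m_j$ copies of $\Alt_{m_j}$: the $m_j$ socle factors in the $j$-th isotypic block are mutually isomorphic to \emph{some} simple group $T_j$, but there is no reason for $T_j$ to be $\Alt_{m_j}$. A concrete counterexample: take $G = \Alt_5 \wr \Alt_{60} = \Alt_5^{60}\rtimes\Alt_{60}$. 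Since $\Alt_5$ has trivial centre and $\Alt_{60}$ acts transitively on the $60$ coordinates, every nontrivial normal subgroup of $G$ contains $\Alt_5^{60}$, so $\Rad(G)=1$ and $\Soc(G)=\Alt_5^{60}$. But $\Alt_{60}\cong G/\Soc(G)$ is a nonabelian composition factor of $G$ not appearing in $\Soc(G)$. With $S=\Alt_{60}$, your set $M$ of socle factors isomorphic to $S$ is empty, and your purported contradiction never materialises.

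The paper's proof sidesteps this exactly by \emph{iterating}: it recursively forms $G_0:=G/\Rad(G)$ and $G_{m+1}:=(G_m/\Soc(G_m))/\Rad(G_m/\Soc(G_m))$, obtaining a descending chain of semisimple characteristic quotients. The successive layers $\Rad(G)$, $\Soc(G_0)$, $\Rad(G_0/\Soc(G_0))$, $\Soc(G_1)$, $\dots$ give a normal series of $G$ whose factors are alternately solvable and of the form $\Soc(G_m)$; by Jordan--H\"older, $S$ must therefore be a direct factor of some $\Soc(G_m)$, and one then takes $H:=G_m/\C_{G_m}(S^n)$ where $S^n$ is the $S$-isotypic part of $\Soc(G_m)$. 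Your construction of $H$ from $M$ at the end is fine once such an $M\neq 1$ has been located inside a suitable characteristic quotient — the missing idea is precisely the recursion that guarantees one eventually finds the layer where $S$ lives.
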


\begin{proof}
For (1): Denote by $\pi$ the canonical projection $G\rightarrow G/N$. Let $O\subseteq G$ be an $\Aut(G)$-orbit such that $|O|=\MAOL(G)$. Since each automorphism of $G$ induces an automorphism of $G/N$, we have that $\pi[O]\subseteq G/N$ is contained in some $\Aut(G/N)$-orbit $\tilde{O}$. Hence

\[
\MAOL(G)=|O|\leq|\tilde{O}|\cdot|N|\leq\MAOL(G/N)\cdot|N|,
\]

and upon dividing both sides of that inequality by $|G|$, we get the desired bound $\maol(G)\leq\maol(G/N)$.

For (2): We recursively define a sequence $(G_m)_{m\in\IN}$ of characteristic quotients of $G$ as follows: $G_0:=G/\Rad(G)$, and $G_{m+1}:=(G_m/\Soc(G_m))/(\Rad(G_m/\Soc(G_m)))$. Then by construction, each of the groups $G_m$, $m\in\IN$, is semisimple, and for all but finitely many $m\in\IN$, $G_m$ is trivial. Furthermore, since solvable finite groups only have abelian composition factors, each nonabelian composition factor of $G$, in particular $S$, is a composition factor (hence a direct factor) of at least one of the groups $\Soc(G_m)$, $m\in\IN$. Say $\Soc(G_m)$ contains $S$ as a composition factor, and let us write $\Soc(G_m)=S^n\times S_1^{n_1}\times\cdots\times S_r^{n_r}$ with $r\in\IN$, $n_1,\ldots,n_r\in\IN^+$, and $S,S_1,\ldots,S_r$ pairwise nonisomorphic nonabelian finite simple groups. Set $H:=G_m/\C_{G_m}(S^n)$. Then the restriction of the canonical projection $G_m\rightarrow H$ to $S^n\leq\Soc(G_m)$ is injective, so we can view $S^n$ embedded as a normal subgroup in $H$. Moreover, by construction, the conjugation action of $H$ on $S^n$ is faithful. Therefore, $H$ is semisimple with socle $S^n$, and since $H$ is a characteristic quotient of $G_m$, $H$ is also a characteristic quotient of $G$.
\end{proof}

\subsection{Conjugacy in wreath products and automorphism orbits in finite semisimple groups}\label{subsec2P2}

Let $G$ be a finite group, let $n\in\IN^+$, and consider an element $\vec{g}=(g_1,\ldots,g_n)\sigma$, with $g_1,\ldots,g_n\in G$ and $\sigma\in\Sym_n$, of the wreath product $G\wr\Sym_n=G^n\rtimes\Sym_n$. In Lemma \ref{mainLem2}(1) below, we give a combinatorial characterization of when another element $\vec{h}=(h_1,\ldots,h_n)\upsilon\in G\wr\Sym_n$ is $(G\wr\Sym_n)$-conjugate to $w$. We note that conjugacy of elements in wreath products has already been studied in \cite{CH06a}, but the focus there was on efficient computation of a set of representatives for the conjugacy classes, and the situation was more complex because the wreath product could be taken with any permutation group (not just $\Sym_n$); in our more special situation, a particularly nice combinatorial characterization of conjugacy can be given.

Lemma \ref{mainLem2}(2) then is essentially an application of Lemma \ref{mainLem2}(1) to the special case $G:=\Aut(S)$, the automorphism group of a nonabelian finite simple group $S$. It gives an upper bound on $\maol(H)$ for $H$ a nontrivial finite semisimple group with $\Soc(H)$ isomorphic to a power of $S$.

Before formulating and proving Lemma \ref{mainLem2}, we need to introduce some notation.

\begin{deffinition}\label{typeDef}
Let $S$ be a nonabelian finite simple group, $\pi:\Aut(S)\rightarrow\Out(S)$ the canonical projection, $c$ an $\Aut(S)$-conjugacy class.

\begin{enumerate}
\item The term \emph{$S$-type} is a synonym for \enquote{$\Out(S)$-conjugacy class}.
\item We call $\pi[c]$, which is an $\Out(S)$-conjugacy class, the \emph{$S$-type of $c$}, denoted by $\type_S(c)$.
\item We set $\rho(c):=\frac{|c|}{|S|\cdot|\type_S(c)|}$. Equivalently, $\rho(c)$ is the proportion of elements of $c$ within any of the $S$-cosets in $\Aut(S)$ which $c$ intersects.
\end{enumerate}
\end{deffinition}

\begin{remmark}\label{typeRem}
Let $S$ be a nonabelian finite simple group. We give two remarks concerning Definition \ref{typeDef}:

\begin{enumerate}
\item For all $\Aut(S)$-conjugacy classes $c$, we have $\rho(c)\leq\h(S)$ (see Notation \ref{hNot}).
\item For all $S$-types $\tau$, the sum of the positive real numbers $\rho(c)$, where $c$ ranges over the $\Aut(S)$-conjugacy classes of $S$-type $\tau$, equals $1$.
\end{enumerate}
\end{remmark}

\begin{deffinition}\label{bcpcDef}
Let $G$ be a finite group, $n\in\IN^+$, and let $w=(g_1,\ldots,g_n)\sigma\in G\wr\Sym_n$.

\begin{enumerate}
\item For a length $l$ cycle $\zeta=(i_1,\ldots,i_l)$ of $\sigma$, we define $\bcpc_{\zeta}(w):=(g_{i_l}g_{i_{l-1}}\cdots g_{i_1})^G$, the \emph{backward cycle product class of $w$ with respect to $\zeta$}, a $G$-conjugacy class.
\item For $l\in\{1,\ldots,n\}$, we denote by $M_l(w)$ the (possibly empty) multiset of the $\bcpc_{\zeta}(w)$, where $\zeta$ runs through the length $l$ cycles of $\sigma$.
\item In the special case where $G=\Aut(S)$ for a nonabelian finite simple group $S$, we denote, for $l\in\{1,\ldots,n\}$ and $\tau$ an $S$-type, by $M_l^{\tau}(w)$ the (possibly empty) multiset of the $\bcpc_{\zeta}(w)$, where $\zeta$ runs through the length $l$ cycles of $\sigma$ such that $\type_S(\bcpc_{\zeta}(w))=\tau$.
\end{enumerate}
\end{deffinition}

\begin{deffinition}\label{rDef}
Let $S$ be a nonabelian finite simple group, and let $M$ be a nonempty multiset of $\Aut(S)$-conjugacy classes of a common $S$-type $\tau$.

\begin{enumerate}
\item We set $n(M):=|M|$, the multiset-cardinality of $M$.
\item We define $k(\tau)$ as the number of $\Aut(S)$-conjugacy classes of type $\tau$.
\item For an $\Aut(S)$-conjugacy class $c$, we define $l_c(M)$ as the multiplicity with which $c$ occurs as an element of $M$ (of course, $l_c(M)=0$ unless $c$ is of $S$-type $\tau$).
\item Let $c_1^{(\tau)},\ldots,c_{k(\tau)}^{(\tau)}$ be the distinct $\Aut(S)$-conjugacy classes of $S$-type $\tau$. We define

\[
r(M):={n(M) \choose l_{c_1^{(\tau)}}(M),\ldots,l_{c_{k(\tau)}^{(\tau)}}(M)}\cdot\rho(c_1^{(\tau)})^{l_{c_1^{(\tau)}}(M)}\cdots\rho(c_{k(\tau)}^{(\tau)})^{l_{c_k^{(\tau)}}(M)}.
\]
\end{enumerate}
\end{deffinition}

Note that in the situation of Definition \ref{rDef}(4), $r(M)$ is the evaluation at the argument $(l_{c_1^{(\tau)}}(M),\ldots,l_{c_{k(\tau)}^{(\tau)}}(M))$ of the probability mass function of the multinomial distribution with parameters $n(M)$, the number of experiment repetitions, and $(\rho(c_1^{(\tau)}),\ldots,\rho(c_{k(\tau)}^{(\tau)}))$, the ordered list of success probabilities of the outcomes of a single experiment.

\begin{lemmma}\label{mainLem2}
The following hold:

\begin{enumerate}
\item Let $G$ be a finite group, $n\in\IN^+$, $g_1,\ldots,g_n,h_1,\ldots,h_n\in G$ and $\sigma,\upsilon\in\Sym_n$. The following are equivalent:

\begin{enumerate}
\item The two elements $\vec{g}:=(g_1,\ldots,g_n)\sigma$ and $\vec{h}:=(h_1,\ldots,h_n)\upsilon$ of $G\wr\Sym_n$ are conjugate.
\item $\sigma$ and $\upsilon$ have the same cycle type (i.e., are conjugate in $\Sym_n$), and for all $l\in\{1,\ldots,n\}$, the equality of multisets $M_l(\vec{g})=M_l(\vec{h})$ holds.
\end{enumerate}

\item Let $S$ be a nonabelian finite simple group, $n\in\IN^+$, let $H$ be a finite group such that $S^n\leq H\leq\Aut(S^n)=\Aut(S)\wr\Sym_n$, and let $\vec{\alpha}:=(\alpha_1,\ldots,\alpha_n)\sigma\in H$. Then

\[
\frac{1}{|H|}\cdot|\vec{\alpha}^{\Aut(H)}|\leq\prod_l\prod_{\tau}{r(M_l^{\tau}(\vec{\alpha}))},
\]

where $l$ runs through the cycle lengths of $\sigma$ and $\tau$ runs through those $\Out(S)$-conjugacy classes that occur as the $S$-type of one of the $\bcpc_{\zeta}(\vec{\alpha})$ where $\zeta$ is a length $l$ cycle of $\sigma$.
\end{enumerate}
\end{lemmma}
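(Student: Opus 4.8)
The plan is to establish part (1) first by a direct computation with the wreath-product multiplication, and then to derive part (2) by transporting an $\Aut(H)$-orbit into $\Aut(S^n)=\Aut(S)\wr\Sym_n$ and counting conjugacy classes there via part (1).

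For part (1), I would argue as follows. The implication (b)$\Rightarrow$(a) is the substantive direction; (a)$\Rightarrow$(b) is the routine bookkeeping converse. Assume (b). First, conjugating $\vec g$ by an element of the base group $G^n$ has no effect on the permutation part $\sigma$ and replaces each backward cycle product $g_{i_l}\cdots g_{i_1}$ along a cycle $\zeta=(i_1,\dots,i_l)$ by a $G$-conjugate of itself (a short direct calculation: conjugation by $(x_1,\dots,x_n)$ sends $(g_1,\dots,g_n)\sigma$ to $(x_1 g_1 x_{\sigma^{-1}(1)}^{-1},\dots)\sigma$, and the telescoping product along $\zeta$ becomes $x_{i_l}(g_{i_l}\cdots g_{i_1})x_{i_l}^{-1}$). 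Moreover, by choosing the $x_j$ appropriately along each cycle we can make all but \emph{one} coordinate per cycle equal to the identity, i.e.\ every element of $G\wr\Sym_n$ is base-group-conjugate to one in a normal form where each cycle carries its backward product in a single designated slot and $1$'s elsewhere. Next, conjugating by a permutation $\pi\in\Sym_n\le G\wr\Sym_n$ conjugates $\sigma$ to $\pi\sigma\pi^{-1}$ and permutes the coordinates accordingly; since $\sigma$ and $\upsilon$ have the same cycle type, we can use such a $\pi$ to match up the cycle partitions. Finally, within a fixed cycle, cyclically rotating the chosen slot conjugates the backward product to a cyclic rotation of itself, which is the same $G$-conjugacy class. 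Combining these moves: the hypothesis $M_l(\vec g)=M_l(\vec h)$ for all $l$ lets us match the cycles of $\sigma$ with those of $\upsilon$ so that corresponding cycles carry backward products in the same $G$-conjugacy class; then base-group and slot-rotation conjugations finish the job. For (a)$\Rightarrow$(b): conjugation preserves the permutation part up to $\Sym_n$-conjugacy, and the argument above shows it can only replace each $M_l$ by itself.

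For part (2), let $\vec\alpha=(\alpha_1,\dots,\alpha_n)\sigma\in H$, and recall from the remark after Lemma~\ref{mainLem1} (the Rosenberg fact) that $\Aut(H)$ is realized by $\N_{\Aut(S^n)}(H)$ acting by conjugation, since $H$ is semisimple with $\Soc(H)=S^n$. Hence the $\Aut(H)$-orbit of $\vec\alpha$ is contained in the $\Aut(S^n)$-conjugacy class of $\vec\alpha$, so $|\vec\alpha^{\Aut(H)}|\le|\vec\alpha^{\Aut(S^n)}|$. It therefore suffices to bound $\frac{1}{|H|}|\vec\alpha^{\Aut(S^n)}|$, and since $|H|\ge|S|^n=|\Soc(\Aut(S^n))|$, it is enough to bound $\frac{1}{|S|^n}|\vec\alpha^{\Aut(S^n)}|$. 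Now $|\vec\alpha^{\Aut(S^n)}|$ counts the elements of $\Aut(S)\wr\Sym_n$ that are conjugate to $\vec\alpha$, and by part (1) this count is: (number of permutations $\upsilon$ of the same cycle type as $\sigma$) times, for each such $\upsilon$, the number of ways to assign to its cycles backward cycle product classes realizing the prescribed multisets $M_l(\vec\alpha)$, times, having fixed those classes, the number of base-group tuples producing them. I would compute the last factor cycle-by-cycle: for a fixed cycle $\zeta$ of length $l$ with backward product required to lie in a $G$-conjugacy class $c$ (here $G=\Aut(S)$), the number of $l$-tuples $(g_{i_1},\dots,g_{i_l})\in G^l$ with $g_{i_l}\cdots g_{i_1}\in c$ equals $|G|^{l-1}\cdot|c|$ (fix $l-1$ coordinates freely, the last is determined up to the $|c|$ choices of the product). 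Assembling these, dividing by $|S|^n$, and regrouping the product over cycles according to length $l$ and $S$-type $\tau$, the combinatorial factors reorganize exactly into the multinomial coefficients $\binom{n(M)}{l_{c_1}(M),\dots}$ and the $|G|$-powers cancel against $|S|^n$ and $|\Out(S)|$ to leave precisely the products $\rho(c_i)^{l_{c_i}(M)}$, yielding $\prod_l\prod_\tau r(M_l^\tau(\vec\alpha))$; any leftover factors ($[\N_{\Sym_n}(\sigma):1]$-type terms, which are $\le 1$ after normalization) only help the inequality.

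The main obstacle I anticipate is the bookkeeping in part (2): carefully accounting for the number of permutations of a given cycle type, the overcounting coming from automorphisms of the cycle structure (cyclic rotations within a cycle and permutations among equal-length cycles carrying the same class), and checking that all of these combine to give \emph{exactly} the multinomial probability mass function $r(\cdot)$ rather than something merely comparable — and that wherever the bound is not an equality, the discrepancy is in the direction that makes "$\le$" hold. The normal-form/telescoping computation in part (1) is conceptually easy but must be written with care about the order of the product (the "backward" convention is exactly what makes conjugation by the base group act cleanly), and one should double-check the edge case of fixed points ($1$-cycles), where $\bcpc_\zeta(\vec\alpha)=g_i^G$ and the support-includes-$1$-cycles convention from the notation section is what keeps the statement uniform.
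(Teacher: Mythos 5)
Your part (1) is essentially the same argument as the paper's (base-group conjugation telescopes along cycles to conjugate the backward product, permutation conjugation matches cycle structures, and the $1$-cycle convention makes this uniform), and it is correct.

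Part (2), however, has a genuine gap. You reduce correctly via Rose's theorem to $\frac{1}{|H|}|\vec\alpha^{\Aut(H)}|\leq\frac{1}{|H|}|\vec\alpha^{\Aut(S^n)}|$, but then you replace $|H|$ by the smaller $|S|^n$ and try to show $\frac{1}{|S|^n}|\vec\alpha^{\Aut(S^n)}|\leq\prod_l\prod_\tau r(M_l^\tau(\vec\alpha))$. This intermediate inequality is false in general: the right-hand side is a product of probability mass function values and hence is at most $1$, while the left-hand side is the size of the full conjugacy class in $\Aut(S)\wr\Sym_n$ divided by $|S|^n$, which can vastly exceed $1$ because the class spreads across many $S^n$-cosets. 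Already the case $n=1$ breaks: if $\alpha\in\Aut(S)$ is not out-central and $c:=\alpha^{\Aut(S)}$, then the left-hand side is $\frac{|c|}{|S|}$, whereas $r(M_1^\tau)=\rho(c)=\frac{|c|}{|S|\,|\type_S(c)|}$, which is smaller by the factor $|\type_S(c)|>1$. Your closing remark that the "leftover factors only help the inequality" points exactly at the place where the argument fails: those factors (the number of cosets hit) are $\geq 1$, not $\leq 1$, and they hurt rather than help.

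What is missing is a \emph{local} (per-coset) reduction rather than a global one. The paper first observes that $K:=\Aut(S)^n\cap H$ is characteristic in $H$, so $\vec\alpha^{\Aut(H)}$ distributes evenly over the $K$-cosets it meets; hence it suffices to bound the proportion of $\vec\alpha^{\Aut(H)}\cap K\vec\alpha$ inside the single coset $K\vec\alpha$. Each such $K$-coset is a union of $S^n$-cosets $S^n\vec\beta$, so it further suffices to bound $\frac{1}{|S|^n}\bigl|\vec\alpha^{\Aut(S^n)}\cap S^n\vec\beta\bigr|$ for a fixed $\vec\beta$. Only after fixing the permutation part and, coordinatewise, the $\Aut(S)/S$-coset of each entry does the $S$-type of each cycle become constant, and then the cycle-by-cycle count does yield exactly the multinomial value $r(M_l^\tau(\vec\alpha))$ per $(l,\tau)$. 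Without this restriction to a single $S^n$-coset there is no hope of matching the $\rho(c)$ normalization, because $\rho(c)$ is by definition a proportion of $c$ within one $S$-coset, not within all of $\Aut(S)$.
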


\begin{proof}
For (1): Fix $\vec{g}=(g_1,\ldots,g_n)\sigma\in G\wr\Sym_n$. It is clear that the possible permutation parts of the $(G\wr\Sym_n)$-conjugates of $\vec{g}$ are just the $\Sym_n$-conjugates of $\sigma$, and since for each cycle $\zeta$ of $\sigma$ and all $\psi\in\Sym_n$, $\bcpc_{\zeta}(\vec{g})=\bcpc_{\zeta^{\psi}}(\vec{g}^{\psi})$, we have $M_l(\vec{g})=M_l(\vec{g}^{\psi})=M_l((g_1,\ldots,g_n)^{\psi}\sigma^{\psi})$ for all $l\in\{1,\ldots,n\}$ and all $\psi\in\Sym_n$. Therefore, statement (1) follows from the following special case of it, which is a characterization of the conjugates of $\vec{g}$ that have the same permutation part as $\vec{g}$:

\enquote{An element $\vec{h}=(h_1,\ldots,h_n)\sigma\in G\wr\Sym_n$ is $(G\wr\Sym_n)$-conjugate to $\vec{g}$ if and only if for all $l\in\{1,\ldots,n\}$, $M_l(\vec{g})=M_l(\vec{h})$.}

We will show this assertion, so we need to investigate which $\vec{h}=(h_1,\ldots,h_n)\sigma\in G\wr\Sym_n$ are conjugate to $\vec{g}$. Consider a conjugator $\vec{k}=(k_1,\ldots,k_n)\psi\in G\wr\Sym_n$. The permutation part of $\vec{g}^{\vec{k}}$ is equal to $\sigma$ if and only if $\psi\in\C_{\Sym_n}(\sigma)$, so we assume this from now on. For $l=1,\ldots,n$, we denote by $u_l$ the number of $l$-cycles of $\sigma$, and by $I_l$ the subset of $\{1,\ldots,n\}$ consisting of those indices lying on an $l$-cycle of $\sigma$. Note that $|I_l|=l\cdot u_l$, and that $\C_{\Sym_n}(\sigma)$ is the permutational direct product of the subgroups $C_l\leq\Sym_{I_l}$, $l=1,\ldots,n$, where $C_l$ is the permutational wreath product of the cyclic degree $l$ permutation group generated by an $l$-cycle with $\Sym_{u_l}$, where $\Sym_{u_l}$ acts on the supports of the $u_l$ cycles of length $l$ of $\sigma$. Therefore, the possible values of $\vec{g}^{\psi}=(g_{\psi^{-1}(1)},\ldots,g_{\psi^{-1}(n)})\sigma=(g'_1,\ldots,g'_n)\sigma=:\vec{g'}$ are just those where the $n$-tuple $(g'_1,\ldots,g'_n)$ is obtained from $(g_1,\ldots,g_n)$ by permuting entry groups that are supports of $\sigma$-cycles of the same length and cyclically shifting (with respect to the order of indices on the corresponding $\sigma$-cycle) such entry groups. In particular, we have the following:

\begin{enumerate}
\item For each $\psi\in\C_{\Sym_n}(\sigma)$, we have that for all $l=1,\ldots,n$, $M_l(\vec{g})=M_l(\vec{g}^{\psi})$, or equivalently, that there is a multiset bijection $\chi_{\psi}$ from the set $\{\zeta_1^{(l)},\ldots,\zeta_{u_l}^{(l)}\}$ of length $l$ cycles of $\sigma$ to the multiset $M_l(\vec{g})$ such that for $i=1,\ldots,u_l$, $\bcpc_{\zeta_i^{(l)}}(\vec{g}^{\psi})=\chi_{\psi}(\zeta_i^{(l)})$.
\item For every multiset bijection $\chi:\{\zeta_1^{(l)},\ldots,\zeta_{u_l}^{(l)}\}\rightarrow M_l(\vec{g})$, there is a $\psi\in\C_{\Sym_n}(\sigma)$ such that $\chi_{\psi}=\chi$.
\end{enumerate}

Next, we study which elements $\vec{h}=(h_1,\ldots,h_n)\sigma$ of $G\wr\Sym_n$ one can reach from $\vec{g}=(g_1,\ldots,g_n)\sigma$ via conjugation by an $n$-tuple $(k_1,\ldots,k_n)\in G^n$. Similar considerations were already used by Cannon and Holt in \cite[proof of Proposition 2.1]{CH06a}, but for the reader's convenience, we give a more elaborate form of their argument here.

First, observe that for each cycle $\zeta$ of $\sigma$, those entries of the tuple part of $\vec{g}^{(k_1,\ldots,k_n)}$ that correspond to indices from the support of $\zeta$ only depend on such entries from $\vec{g}$ and from $(k_1,\ldots,k_n)$, so we can study this problem \enquote{cycle-wise}. Hence let $\zeta=(i_1,\ldots,i_l)$ be a cycle of $\sigma$, and assume w.l.o.g.~that $i_1<i_2<\cdots<i_l$. Then, using the symbol $\ast$ as a substitute for (possibly empty) entry groups outside the support of $\zeta$, we have that

\begin{align*}
\vec{g}^{(k_1,\ldots,k_n)} &=((\ast,g_{i_1},\ast,g_{i_2},\ast,\ldots,\ast,g_{i_l},\ast)\sigma)^{(\ast,k_{i_1},\ast,k_{i_2},\ast,\ldots,\ast,k_{i_l},\ast)} \\
&=(\ast,k_{i_1}g_{i_1}k_{\sigma^{-1}(i_1)}^{-1},\ast,k_{i_2}g_{i_2}k_{\sigma^{-1}(i_2)}^{-1},\ast,\ldots,\ast,k_{i_l}g_{i_l}k_{\sigma^{-1}(i_l)}^{-1},\ast)\sigma \\
&=(\ast,k_{i_1}g_{i_1}k_{i_m}^{-1},\ast,k_{i_2}g_{i_2}k_{i_1}^{-1},\ast,\ldots,\ast,k_{i_l}g_{i_l}k_{i_{l-1}}^{-1},\ast)\sigma.
\end{align*}

Now fix $(h_1,\ldots,h_n)\in G^n$ and set the last expression in the above chain of equalities equal to $\vec{h}=(h_1,\ldots,h_n)\sigma$. Focussing again only on the entries corresponding to indices lying on the support of $\zeta$, we get the following system of $l$ equations over $G$ in the variables $k_{i_1},\ldots,k_{i_l}$:

\begin{align*}
k_{i_1}g_{i_1}k_{i_m}^{-1} &= h_{i_1}, \\
k_{i_2}g_{i_2}k_{i_1}^{-1} &= h_{i_2}, \\
&\vdots \\
k_{i_l}g_{i_l}k_{i_{l-1}}^{-1} &= h_{i_l}.
\end{align*}

By isolating the variable $k_{i_j}$ in the $j$-th equation and making a chain of substitutions, we find that the solvability over $G$ of this system of equations is equivalent to the solvability over $G$ of the following single equation in the variable $k_{i_l}$:

\[
k_{i_l}=h_{i_l}h_{i_{l-1}}\cdots h_{i_1}k_{i_l}g_{i_1}^{-1}g_{i_2}^{-1}\cdots g_{i_l}^{-1}.
\]

Therefore, the system of equations associated with the $\sigma$-cycle $\zeta$ is solvable over $G$ if and only if $\bcpc_{\zeta}(\vec{h})=\bcpc_{\zeta}(\vec{g})$, and in summary, the following hold:

\begin{enumerate}
\item For each $(k_1,\ldots,k_n)\in G^n$, we have that for all cycles $\zeta$ of $\sigma$, $\bcpc_{\zeta}(\vec{g})=\bcpc_{\zeta}(\vec{g}^{(k_1,\ldots,k_n)})$.
\item Conversely, for every $\vec{h}=(h_1,\ldots,h_n)\sigma\in G\wr\Sym_n$ such that for every cycle $\zeta$ of $\sigma$, $\bcpc_{\zeta}(\vec{g})=\bcpc_{\zeta}(\vec{h})$, there is a $(k_1,\ldots,k_n)\in G^n$ such that $\vec{g}^{(k_1,\ldots,k_n)}=\vec{h}$.
\end{enumerate}

By combining these insights on the possible values of $\vec{g}^{\psi}$ and $\vec{g}^{(k_1,\ldots,k_n)}$, one gets the asserted result.

For (2): Set $K:=\Aut(S)^n\cap H$. Then $K$ is a characteristic subgroup of $H$, and the $K$-cosets in $H$ correspond to the various permutation parts of elements of $H$. Since $\vec{\alpha}^{\Aut(H)}$ distributes evenly among the $K$-cosets which it intersects, it suffices to show that the proportion of $\vec{\alpha}^{\Aut(H)}\cap K\vec{\alpha}$ within $K\vec{\alpha}$ is at most the indicated product of $r$-values. Now $K\vec{\alpha}$ is itself a disjoint union of $S^n$-cosets in $H$, each of the form $S^n\vec{\beta}=S^n(\beta_1,\ldots,\beta_n)\sigma=(S\beta_1,\ldots,S\beta_n)\sigma$ for some fixed $\beta_1,\ldots,\beta_n\in\Aut(S)$, and so it suffices to show that the proportion of $\vec{\alpha}^{\Aut(H)}\cap S^n\vec{\beta}$ in $S^n\vec{\beta}$ is at most the indicated product. We will actually show the stronger statement that the proportion of $\vec{\alpha}^{\Aut(S^n)}\cap S^n\vec{\beta}$ in $S^n\vec{\beta}$ is at most the indicated product.

For $(s_1,\ldots,s_n)\in S^n$, set $\vec{\beta}(s_1,\ldots,s_n):=(s_1\beta_1,\ldots,s_n\beta_n)\sigma$. We want to bound the proportion of $(s_1,\ldots,s_n)\in S^n$ such that $\vec{\beta}(s_1,\ldots,s_n)\in\vec{\alpha}^{\Aut(S^n)}$, which by statement (1) is equivalent to $M_l^{\tau}(\vec{\alpha})=M_l^{\tau}(\vec{\beta}(s_1,\ldots,s_n))$ for all $l\in\{1,\ldots,n\}$ and all $S$-types $\tau$, or equivalently, just for those $(l,\tau)$ such that $M_l^{\tau}(\vec{\alpha})$ is nonempty.

Observe that for each cycle $\zeta$ of $\sigma$, the $S$-type $\tau_{\zeta}$ of $\bcpc_{\zeta}(\vec{\beta}(s_1,\ldots,s_n))$ is independent of $(s_1,\ldots,s_n)$. In what follows, $l$ is always an element of $\{1,\ldots,n\}$, and $\tau$ is an $S$-type. We denote by $u_{l,\tau}$ the number of $l$-cycles $\zeta$ of $\sigma$ such that $\tau_{\zeta}=\tau$. Now if, for some $(l,\tau)$, we have $u_{l,\tau}\not=|M_l^{\tau}(\vec{\alpha})|=n(M_l^{\tau}(\vec{\alpha}))$, then the intersection $\vec{\alpha}^{\Aut(S^n)}\cap S^n\vec{\beta}(1_S,\ldots,1_S)$ is empty, and so we may henceforth assume that $u_{l,\tau}=n(M_l^{\tau}(\vec{\alpha}))$ for all $(l,\tau)$, and under this assumption, we will actually show that the proportion of $\vec{\alpha}^{\Aut(S^n)}\cap S^n\vec{\beta}$ in $S^n\vec{\beta}$ is equal to the indicated product of $r$-values.

For each $(l,\tau)$, the single condition $M_l^{\tau}(\vec{\beta}(s_1,\ldots,s_n))=M_l^{\tau}(\vec{\alpha})$ imposes restrictions only on those variables $s_i$ where $i$ is an element of the support of one of the length $l$ cycles $\zeta$ of $\sigma$ such that $\tau_{\zeta}=\tau$, and so for distinct $(l,\tau)$, the corresponding conditions concern disjoint variable sets. It therefore suffices to show that for fixed $(l,\tau)$, the proportion of $(s_1,\ldots,s_n)\in S^n$ such that $M_l^{\tau}(\vec{\beta}(s_1,\ldots,s_n))=M_l^{\tau}(\vec{\alpha})$ is exactly $r(M_l^{\tau}(\vec{\alpha}))$.

Let $\zeta_{l,\tau}^{(1)}=(i_{l,\tau}^{(1,1)},\ldots,i_{l,\tau}^{(1,l)}),\ldots,\zeta_{l,\tau}^{u_{l,\tau}}=(i_{l,\tau}^{(u_{l,\tau},1)},\ldots,i_{l,\tau}^{(u_{l,\tau},l)})$ be the $u_{l,\tau}$ distinct $l$-cycles $\zeta$ of $\sigma$ such that $\tau_{\zeta}=\tau$. We show that for all $(s_i)_{i\in\{1,\ldots,n\}\setminus\{i_{l,\tau}^{(1,l)},\ldots,i_{l,\tau}^{(u_{l,\tau},l)}\}}\in S^{n-u_{l,\tau}}$, the proportion of $(s_{i_{l,\tau}^{(1,l)}},\ldots,s_{i_{l,\tau}^{(u_{l,\tau,l)}}})\in S^{u_{l,\tau}}$ such that $M_l^{\tau}(\vec{\beta}(s_1,\ldots,s_n))=M_l^{\tau}(\vec{\alpha})$ is exactly $r(M_l^{\tau}(\vec{\alpha}))$.

As in Definition \ref{rDef}(4), let $c_1^{(\tau)},\ldots,c_{k(\tau)}^{(\tau)}$ be the $k(\tau)$ distinct $\Aut(S)$-conjugacy classes of $S$-type $\tau$. Now $M_l^{\tau}(\vec{\beta}(s_1,\ldots,s_n))=M_l^{\tau}(\vec{\alpha})$ just means that among the $u_{l,\tau}$ many elements $\xi_j(s_{i_{l,\tau}^{(j,l)}}):=s_{i_{l,\tau}^{(j,l)}}\beta_{i_{l,\tau}^{(j,l)}}s_{i_{l,\tau}^{(j,l-1)}}\beta_{i_{l,\tau}^{(j,l-1)}}\cdots s_{i_{l,\tau}^{(j,1)}}\beta_{i_{l,\tau}^{(j,1)}}\in\Aut(S)$ for $j=1,\ldots,u_{l,\tau}$, precisely $l_{c_t^{(\tau)}}(M_l^{\tau}(\vec{\alpha}))$ many lie in $c_t^{(\tau)}$ for each $t\in\{1,\ldots,k(\tau)\}$. Consider drawing the $s_{i_{l,\tau}^{(j,l)}}$, $j=1,\ldots,u_{l,\tau}$, from $S$ at random. Then for each $j\in\{1,\ldots,u_{l,\tau}\}$ and each $t\in\{1,\ldots,k(\tau)\}$, the probability that $\xi_j(s_{i_{l,\tau}^{(j,l)}})$ lands in $c_t^{(\tau)}$ is precisely $\rho(c_t^{(\tau)})$. Therefore, the probability that $M_l^{\tau}(\vec{\beta}(s_1,\ldots,s_n))=M_l^{\tau}(\vec{\alpha})$ is just the multinomial distribution probability mass function value $r(M_l^{\tau}(\vec{\alpha}))$, as required.
\end{proof}

\subsection{A lemma on the multinomial distribution}\label{subsec2P3}

The next ingredient in the proof of our main results is a method to bound the factors $r(M_l^{\tau}(\vec{\alpha}))$ in Lemma \ref{mainLem2}(2), which is provided by the following lemma, essentially reducing the problem to bounding the numbers $\rho(c)$ introduced in Definition \ref{typeDef}(3):

\begin{lemmma}\label{mainLem3}
Let $k\in\IN^+$, let $\rho_1,\ldots,\rho_k\in\left[0,1\right]$ such that $\rho_1+\cdots+\rho_k=1$, and let $n\in\IN^+$. Then the following hold:

\begin{enumerate}
\item Let $l_1,\ldots,l_k\in\IN^+$ with $l_1\geq l_2\geq\cdots\geq l_k$ and $n=l_1+\cdots+l_k$. Then unless $n=k\in\{2,3\}$, we have ${n \choose l_1,\ldots,l_k}\rho_1^{l_1}\cdots\rho_k^{l_k}\leq\rho_1$.
\item Let $l_1,\ldots,l_k\in\IN$ with $n=l_1+\cdots+l_k$. Then ${n \choose l_1,\ldots,l_k}\rho_1^{l_1}\cdots\rho_k^{l_k}\leq\max_{i=1,\ldots,k}{\rho_i}$.
\end{enumerate}
\end{lemmma}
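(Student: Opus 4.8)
The plan is to prove part (2) by a short induction on $n$, and part (1) by reducing it to a self-contained polynomial inequality which is then obtained from the multinomial theorem.

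\textbf{Part (2).} I would induct on $n$. The quantity $\binom{n}{l_1,\ldots,l_k}\rho_1^{l_1}\cdots\rho_k^{l_k}$ is the probability that $n$ independent draws from $\{1,\ldots,k\}$, the draw taking the value $i$ with probability $\rho_i$, produce exactly $l_i$ occurrences of $i$ for each $i$. Conditioning on the value of the first draw yields
\[
\binom{n}{l_1,\ldots,l_k}\prod_{i=1}^k\rho_i^{l_i}=\sum_{i:\,l_i\geq 1}\rho_i\cdot\Bigl(\binom{n-1}{l_1,\ldots,l_i-1,\ldots,l_k}\prod_{j=1}^k\rho_j^{l_j-\delta_{[i=j]}}\Bigr),
\]
and the bracketed factor of the $i$-th summand is again a multinomial probability mass value for $n-1$ trials with the same success probabilities, hence $\leq\max_j\rho_j$ by the inductive hypothesis. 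Therefore the right-hand side is at most $\max_j\rho_j\cdot\sum_{i:\,l_i\geq 1}\rho_i\leq\max_j\rho_j$, and the case $n=1$ is immediate.

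\textbf{Part (1).} Write $\vec l=(l_1,\ldots,l_k)$. If $\rho_1=0$ the claimed bound holds trivially since $l_1\geq 1$; if $\rho_1>0$ it is, after dividing by $\rho_1$, equivalent to $\binom{n}{\vec l}\rho_1^{l_1-1}\rho_2^{l_2}\cdots\rho_k^{l_k}\leq 1$. Its left-hand side is a monomial in $\rho_1,\ldots,\rho_k$ with nonnegative integer exponents summing to $n-1$, so (by weighted AM--GM, or Lagrange multipliers) its maximum over the probability simplex equals $\binom{n}{\vec l}(l_1-1)^{l_1-1}l_2^{l_2}\cdots l_k^{l_k}/(n-1)^{n-1}$, attained at $\rho_i\propto(l_1-1,l_2,\ldots,l_k)$ — and at $\rho_1=0$ in the degenerate case $l_1=1$, where all $l_i=1$ and the maximum is $\binom{n}{\vec l}/(n-1)^{n-1}=k!/(k-1)^{k-1}$. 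Thus part (1) is equivalent to the inequality
\[
\binom{n}{l_1,\ldots,l_k}\,(l_1-1)^{l_1-1}l_2^{l_2}\cdots l_k^{l_k}\ \leq\ (n-1)^{n-1}
\]
(convention $0^0=1$), which I claim holds for all $l_1\geq\cdots\geq l_k\geq 1$ with $n=\sum l_i$, except when $n=k\in\{2,3\}$. To prove this, put $v:=(l_1-1,l_2,\ldots,l_k)\in\IN^k$, so $v_1+\cdots+v_k=n-1$, and use $\binom{n}{\vec l}=\tfrac{n}{l_1}\binom{n-1}{l_1-1,l_2,\ldots,l_k}$, which rewrites the left-hand side as $\tfrac{n}{l_1}\,T$ with $T:=\binom{n-1}{v}\prod_i v_i^{v_i}$; this $T$ is precisely the $\vec j=v$ term in the multinomial expansion $(n-1)^{n-1}=(v_1+\cdots+v_k)^{n-1}=\sum_{\vec j}\binom{n-1}{\vec j}\prod_i v_i^{j_i}$. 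Besides $T$ I keep the ``unit-move'' terms indexed by $\vec j=v-e_a+e_b$ for ordered pairs of distinct indices $a,b$ with $v_a\geq 1$ (here $e_a$ is the $a$-th standard basis vector); a short computation shows such a term equals $\tfrac{v_b}{v_b+1}\,T$, and in particular it vanishes unless also $v_b\geq 1$. These multi-indices are pairwise distinct and distinct from $v$, so summing the corresponding nonnegative terms gives
\[
T\Bigl(1+\sum_{(a,b)}\tfrac{v_b}{v_b+1}\Bigr)\ \leq\ (n-1)^{n-1},
\]
the sum running over ordered pairs of distinct indices with $v_a,v_b\geq 1$. If $l_1\geq 2$ then every $v_i\geq 1$, so the inner sum equals $(k-1)\sum_{b=1}^k\tfrac{v_b}{v_b+1}\geq (k-1)\tfrac{k}{2}\geq k-1\geq\tfrac{1}{l_1}\sum_{i\geq 2}l_i=\tfrac{n-l_1}{l_1}$ (using $v_b\geq 1$ and $l_i\leq l_1$; the case $k=1$ is trivial), whence $T\cdot\tfrac{n}{l_1}\leq(n-1)^{n-1}$, the asserted inequality. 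If $l_1=1$ then $v_1=0$, all $l_i=1$, $n=k$, every unit move touching box $1$ is either invalid or has a vanishing term, so the admissible pairs run over $a,b\in\{2,\ldots,k\}$ only; the inner sum is then $\tfrac{(k-1)(k-2)}{2}$, and this is $\geq\tfrac{n-l_1}{l_1}=k-1$ precisely when $k\geq 4$ — exactly the exception $n=k\in\{2,3\}$.

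I expect the main obstacle to be the bookkeeping in that last step: one must verify that the unit-move multi-indices are genuinely distinct summands of the multinomial expansion, isolate which of them are invalid ($v_a=0$) or contribute zero ($v_b=0$), and count the survivors, and in particular handle the degenerate case $l_1=1$ separately, since that is where the crude estimate $v_b/(v_b+1)\geq\tfrac12$ becomes tight and the exceptional pairs $(n,k)\in\{(2,2),(3,3)\}$ surface. Conceptually, the unit-move trick is a quantitative form of the fact that the modal value of a multinomial distribution with many trials lies far below $\max_i\rho_i$, which is exactly why part (1) can sharpen part (2).
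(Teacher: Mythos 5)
Your proof is correct, and it takes a genuinely different route from the paper's for both parts. The paper establishes part (1) by the same Lagrange-multiplier reduction to checking $\binom{n}{\vec l}(l_1-1)^{l_1-1}l_2^{l_2}\cdots l_k^{l_k}\leq(n-1)^{n-1}$, but then verifies that inequality via Robbins's Stirling-type bounds on factorials, a large case analysis on $(n,k)$, and a substantial list of small cases checked by computer; it then deduces part (2) from part (1), with extra hand-treatment of the exceptional pairs $n=k\in\{2,3\}$ and of zero exponents by a renormalization trick. Your proof of part (2) is a short one-step conditioning induction on $n$ that is independent of part (1) and handles zero exponents for free, and your proof of part (1) replaces the Stirling/computer work entirely with the ``unit-move'' argument: after the same Lagrange reduction, you locate $T=\binom{n-1}{v}\prod_i v_i^{v_i}$ as the central term in the multinomial expansion of $(n-1)^{n-1}$ with $v=(l_1-1,l_2,\ldots,l_k)$, and harvest the nearby terms $\vec j=v-e_a+e_b$, each worth $\tfrac{v_b}{v_b+1}T$, to absorb the factor $\tfrac{n}{l_1}$. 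This is cleaner and entirely elementary, and the way the constraint on $(a,b)$ degenerates when $l_1=1$ (so $v_1=0$) transparently produces exactly the exceptions $n=k\in\{2,3\}$, rather than their appearing as residual cases a computer search happened to flag. I checked the ratio computation $\binom{n-1}{v-e_a+e_b}\prod_i v_i^{(v-e_a+e_b)_i}=\tfrac{v_b}{v_b+1}T$, the distinctness of the multi-indices $v-e_a+e_b$ from each other and from $v$, the positivity of $T$ (via the $0^0=1$ convention), the counting in both the $l_1\geq 2$ and $l_1=1$ subcases, and the small numerical instances $n=k=2,3,4$; all are in order. The one advantage the paper's approach retains is that it makes the Lagrange step load-bearing and then grinds through a single estimate, whereas yours requires the slightly delicate bookkeeping of which unit moves are admissible; but the net effect is a shorter and more conceptual argument that also explains \emph{why} the bound can exceed $\rho_1$ only when $l_1=1$ and $k$ is tiny.
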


Lemma \ref{mainLem3}(2) can be equivalently reformulated as follows: The values of the probability mass function of a multinomial distribution are all bounded from above by the maximum success probability of an outcome of the random experiment that is repeated.

\begin{proof}
For (1): Consider the function

\[
f_{l_1,\ldots,l_k}:\left[0,1\right]^k\rightarrow\IR, (x_1,\ldots,x_k)\mapsto{n \choose l_1,\ldots,l_k}x_1^{l_1-1}x_2^{l_2}\cdots x_k^{l_k},
\]

and note that we are done if we can show that unless $n=k\in\{2,3\}$, all values of $f_{l_1,\ldots,l_k}$ on arguments $(x_1,\ldots,x_k)$ with $x_1+\cdots+x_k=1$ are bounded from above by $1$. This is clear for $k=1$, so we may assume $k\geq 2$ throughout the rest of the proof of statement (1). The method of Lagrange multipliers yields that the maximum value of $f_{l_1,\ldots,l_k}$ on arguments whose entries sum up to $1$ is attained at $(\frac{l_1-1}{n-1},\frac{l_2}{n-1},\ldots,\frac{l_k}{n-1})$, and so to verify the inequality for a particular choice of $n$ and $l_1,\ldots,l_k$, it suffices to check that $f_{l_1,\ldots,l_k}(\frac{l_1-1}{n-1},\frac{l_2}{n-1},\ldots,\frac{l_k}{n-1})\leq 1$. Using a computer, one can thus verify the asserted inequality for

\begin{itemize}
\item $n\in\{1,\ldots,9\}$ and $k=4$,
\item $n\in\{1,\ldots,15\}\setminus\{3\}$ and $k=3$,
\item $n\in\{10,\ldots,96\}\setminus\{2\}$ and $k=2$.
\end{itemize}

We may thus henceforth assume that $(n,k)$ is none of the above listed pairs checked by computer. Using Robbins's explicit Stirling-type bounds for the factorial \cite{Rob55a}, we get that

\begin{align*}
{n \choose l_1,\ldots,l_k}\rho_1^{l_1}\cdots\rho_k^{l_k}&=\frac{n!}{l_1!\cdots l_k!}\rho_1^{l_1}\cdots\rho_k^{l_k}\leq\frac{\sqrt{2\pi n}(n/\e)^n\e^{1/(12n)}}{\prod_{i=1}^k{(\sqrt{2\pi l_i}(l_i/\e)^{l_i}\e^{1/(12l_i+1)})}}\rho_1^{l_1}\cdots\rho_k^{l_k} \\
&\leq\frac{1}{(2\pi)^{(k-1)/2}}\cdot\e^{1/(12n)}\cdot\sqrt{\frac{n}{l_1\cdots l_k}}\cdot(\frac{n\rho_1}{l_1})^{l_1}\cdots(\frac{n\rho_k}{l_k})^{l_k} \\
&\leq\frac{1}{(2\pi)^{(k-1)/2}}\cdot\e^{1/12}\cdot\sqrt{\frac{n}{l_1\cdots l_k}}\cdot\frac{n}{l_1}\rho_1\cdot(\frac{n}{n-1})^{n-1} \\
&\leq\frac{n/l_1}{(2\pi)^{(k-1)/2}}\cdot\e^{1/12}\cdot\sqrt{\frac{n}{l_1\cdots l_k}}\cdot\e\cdot\rho_1 \\
&=\frac{n/l_1}{(2\pi)^{(k-1)/2}}\cdot\e^{13/12}\cdot\sqrt{\frac{n}{l_1\cdots l_k}}\cdot\rho_1,
\end{align*}

where in passing from the second to the third line, the inequality of the arithmetic and geometric means was applied. Therefore, we are done if we can show that

\begin{equation}\label{eq1}
\frac{n/l_1}{(2\pi)^{(k-1)/2}}\cdot\e^{13/12}\cdot\sqrt{\frac{n}{l_1\cdots l_k}}\leq 1
\end{equation}

for all $n$ and $k$ (and associated choices of $l_1,\ldots,l_k$) other than those checked with a computer above. But since $l_1$ is largest among the $k$ numbers $l_i$ summing up to $n$, we have $n/l_1\leq k$, and so for each choice of $n$, $k$ and $l_1,\ldots,l_k$, Formula (\ref{eq1}) is implied by

\begin{equation}\label{eq2}
\frac{k}{(2\pi)^{(k-1)/2}}\cdot\e^{13/12}\cdot\sqrt{\frac{n}{l_1\cdots l_k}}\leq 1.
\end{equation}

We will first deal with the cases $k\in\{2,3,4\}$ separately before giving a uniform argument for $k\geq 5$.

For $k=2$, where we may assume $n\geq97$, let us first assume that $l_2=1$. We will then show directly that $n\rho_1^{n-1}\rho_2\leq\rho_1$, or equivalently, $n\rho_1^{n-2}\rho_2\leq 1$. By the Lagrange multiplier method, this holds if $n\cdot(\frac{n-2}{n-1})^{n-2}\cdot\frac{1}{n-1}\leq 1$, which is equivalent to $(1-\frac{1}{n-1})^{n-1}\leq 1-\frac{2}{n}$, which actually holds for all $n\geq 4$, as the left-hand side is bounded from above by $\e^{-1}=0.3678\ldots$. Let us next assume that $l_2\geq 12$, in which case we will be able to verify the validity of Formula (\ref{eq2}), as follows: For $k=2$, Formula (\ref{eq2}) is equivalent to $\frac{n}{l_1l_2}\leq\frac{\pi}{2}\e^{-13/6}$, and since $n/l_1\leq 2$, Formula (\ref{eq2}) is therefore implied by $l_2\geq\frac{4}{\pi}\e^{13/6}=11.1142\ldots$. So in our proof for $k=2$, we may henceforth assume that $l_2\in\{2,\ldots,11\}$, and under this assumption, we will verify the validity of Formula (\ref{eq1}). Since $l_1\geq n-11$ and $l_2\geq 2$, Formula (\ref{eq1}) is now implied by $\frac{1}{\sqrt{2\pi}}\frac{n}{n-11}\e^{13/12}\sqrt{\frac{n}{2(n-11)}}\leq1$, or equivalently, $1+\frac{11}{n-11}\leq(\sqrt{4\pi}\e^{-13/12})^{2/3}=1.1291\ldots$, which holds for $n\geq97$, as required.

For $k=3$, where we may assume $n\geq16$, let us first assume that $l_2l_3\geq6$. Under this assumption, we will verify the validity of Formula (\ref{eq2}), which here is equivalent to $\frac{n}{l_1l_2l_3}\leq\frac{4\pi^2}{9}\e^{-13/6}$, or $l_2l_3\geq\frac{n}{l_1}\cdot\frac{9}{4\pi^2}\e^{13/6}$. But $\frac{n}{l_1}\leq 3$, and $3\cdot\frac{9}{4\pi^2}\e^{13/6}=5.9700\ldots$. We may thus assume $l_2l_3\leq 5$ now, so that $l_2+l_3\leq6$. Then $l_1\geq n-6$, and hence Formula (\ref{eq1}) is implied by $\frac{1}{2\pi}\frac{n}{n-6}\e^{13/12}\sqrt{n/(n-6)}\leq 1$, or equivalently, $1+\frac{6}{n-6}\leq(2\pi\e^{-13/12})^{2/3}=1.6537\ldots$, which holds for $n\geq16$, as required.

For $k=4$, we may assume $n\geq10$. Just as for $k=3$, one checks that Formula (\ref{eq2}) is now implied by $l_2l_3l_4\geq\frac{8}{\pi^3}\e^{13/6}=2.2522\ldots$, so we may assume $l_2l_3l_4\leq2$, so that $l_2+l_3+l_4\leq 4$ and $l_1\geq n-4$. But Formula (\ref{eq2}) for $k=4$ is equivalent to $\frac{n}{l_1l_2l_3l_4}\leq\frac{\pi^3}{2}\e^{-13/6}=1.7760\ldots$, which is implied by $1+\frac{4}{n-4}\leq1.7760\ldots$, and this holds for $n\geq10$, as required.

Let us now assume $k\geq 5$. Using once more that $n/l_1\leq k$, we find that Formula (\ref{eq2}) is implied by $\frac{k}{l_2\ldots l_k}\leq\frac{(2\pi)^{k-1}}{k^2}\e^{-13/6}$, which in turn is a consequence of $\e^{13/6}k^3\leq(2\pi)^{k-1}$, which can be verified to hold for all $k\geq 5$ by an inductive argument.

For (2): Let us first argue why we may assume that all $l_i$ are positive. Indeed, assume we know statement (2) to be true if all $l_i$ are positive, and say w.l.o.g.~$l_1,\ldots,l_t\geq 1$ and $l_{t+1}=l_{t+2}=\cdots=l_k=0$ for some $t\in\{1,\ldots,k-1\}$. Set

\[
C:={n\choose l_1,\ldots,l_k}\rho_1^{l_1}\cdots\rho_k^{l_k}={n\choose l_1,\ldots,l_t}\rho_1^{l_1}\cdots\rho_t^{l_t}.
\]

Then

\begin{align*}
\frac{1}{(\rho_1+\cdots+\rho_t)^n}C &={n\choose l_1,\ldots,l_t}(\frac{1}{\rho_1+\cdots+\rho_t}\rho_1)^{l_1}\cdots(\frac{1}{\rho_1+\cdots+\rho_t}\rho_t)^{l_t} \\
&\leq\max_{i=1,\ldots,t}{\frac{1}{\rho_1+\cdots+\rho_t}\rho_i}\leq\frac{1}{\rho_1+\cdots+\rho_t}\max_{i=1,\ldots,k}{\rho_i},
\end{align*}

so that

\[
C\leq(\rho_1+\cdots+\rho_t)^{n-1}\max_{i=1,\ldots,k}{\rho_i}\leq\max_{i=1,\ldots,k}{\rho_i},
\]

as required.

Let us now prove statement (2) under the assumption that all $l_i$ are positive. Unless $n=k\in\{2,3\}$, this is a direct consequence of statement (1), so let us go through these final two cases:

For $n=k=2$, we need to check that for all $(\rho_1,\rho_2)\in\left[0,1\right]^2$ with $\rho_1+\rho_2=1$, we have $2\rho_1\rho_2\leq\max\{\rho_1,\rho_2\}$. This holds because at least one of $\rho_1,\rho_2$ is at most $\frac{1}{2}$.

For $n=k=3$, we need to check that for all $(\rho_1,\rho_2,\rho_3)\in\left[0,1\right]^3$ with $\rho_1+\rho_2+\rho_3=1$, we have $6\rho_1\rho_2\rho_3\leq\max\{\rho_1,\rho_2,\rho_3\}$. This is clear if at least two of the three numbers $\rho_1,\rho_2,\rho_3$ are at most $\frac{1}{\sqrt{6}}$, so assume w.l.o.g.~that $\rho_1,\rho_2>\frac{1}{\sqrt{6}}$. Then $\rho_3<1-\frac{2}{\sqrt{6}}$, and so, since at least one of $\rho_1,\rho_2$ is at most $\frac{1}{2}$, say w.l.o.g.~$\rho_2\leq\frac{1}{2}$, we get that $6\rho_1\rho_2\rho_3\leq6\cdot\frac{1}{2}\cdot(1-\frac{2}{\sqrt{6}})\cdot\rho_1=0.5505\ldots\rho_1\leq\rho_1\leq\max\{\rho_1,\rho_2,\rho_3\}$, as required.
\end{proof}

\subsection{Conjugacy classes in automorphism groups of nonabelian finite simple groups}\label{subsec2P4}

By Lemma \ref{mainLem3}(2), each of the factors $r(M_l^{\tau}(\vec{\alpha}))$ in Lemma \ref{mainLem2}(2) is bounded from above by the maximum value of $\rho(c)$, where $c$ ranges over the conjugacy classes in $\Aut(S)$. Let us introduce a notation for this maximum value:

\begin{nottation}\label{hNot}
Let $S$ be a nonabelian finite simple group. We set

\[
\h(S):=\frac{1}{|S|}\max_{\alpha,\beta\in\Aut(S)}{|\alpha^{\Aut(S)}\cap S\beta|}.
\]
\end{nottation}

A possible strategy to obtain a nontrivial constant upper bound on $\maol(G)$ for finite nonsolvable groups $G$ is to give such a bound on $\h(S)$ for nonabelian finite simple groups $S$, which we do in statement (1) of the following lemma:

\begin{lemmma}\label{mainLem4}
Let $S$ be a nonabelian finite simple group. Then the following hold:

\begin{enumerate}
\item $\h(S)\leq\frac{18}{19}$.
\item There is a function $f:\left(0,1\right]\rightarrow\left(0,\infty\right)$ such that for all $\rho\in\left(0,1\right]$: If $\h(S)\geq\rho$, then

\begin{enumerate}
\item if $S$ is alternating, then the degree of $S$ is at most $f(\rho)$,
\item if $S$ is of Lie type, then the untwisted Lie rank and the defining characteristic of $S$ are at most $f(\rho)$.
\end{enumerate}
\end{enumerate}
\end{lemmma}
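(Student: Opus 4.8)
The plan is to go through the classification of the finite simple groups and, for each family, produce an element $\alpha\in\Aut(S)$ together with a coset $S\beta$ such that the proportion $|\alpha^{\Aut(S)}\cap S\beta|/|S|$ is uniformly bounded, and conversely, to show that if this proportion can be pushed close to $1$, then $S$ must be "small" in the appropriate sense. The key observation is that $|\alpha^{\Aut(S)}\cap S\beta| \le |\alpha^{\Aut(S)}|$ and that $|\alpha^{\Aut(S)}| = |\Aut(S)|/|\C_{\Aut(S)}(\alpha)|$, so
\[
\frac{|\alpha^{\Aut(S)}\cap S\beta|}{|S|} \le \frac{|\Aut(S)|}{|S|\cdot|\C_{\Aut(S)}(\alpha)|} = \frac{|\Out(S)|}{|\C_{\Aut(S)}(\alpha)|}.
\]
So a bound on $\h(S)$ will follow once we bound $|\Out(S)|$ from above by a small multiple of $\MCS(\Aut(S))$ — more precisely once we show that every element of $\Aut(S)$ has a centralizer of size at least $\tfrac{19}{18}|\Out(S)|$. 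This reduces statement (1) to a family-by-family examination of $|\Out(S)|$ versus the minimal centralizer order in $\Aut(S)$, and statement (2) to showing that $\h(S)\to 0$ (equivalently, that $\MCS(\Aut(S))/|\Out(S)|\to\infty$, or the relevant refinement of this) as the degree of an alternating group or the rank/characteristic of a Lie type group grows.

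First I would dispose of the sporadic groups and the Tits group by a finite check (there $|\Out(S)|\le 2$ and one inspects the character tables or known centralizer data). Next, for $S=\Alt_m$ with $m\ge 5$, $m\ne 6$: here $\Aut(S)=\Sym_m$ and $|\Out(S)|=2$, and one must bound the proportion of a single $\Sym_m$-conjugacy class inside $\Alt_m$ or inside the odd coset; since the largest conjugacy-class proportion in $\Sym_m$ tends to $0$ (e.g.\ the number of elements of any fixed cycle type divided by $m!$ is small once $m$ is moderately large, with the $m$-cycles giving proportion $1/m$ and this being essentially extremal among useful choices), we get both $\h(\Alt_m)\le\frac{18}{19}$ for all $m$ and $\h(\Alt_m)\to 0$ as $m\to\infty$, which is part (2)(a). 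The value $m=6$ (where $|\Out(S)|=4$) and small $m$ are handled separately. The main work is the Lie type case: here $|\Out(S)|$ is $d\cdot f\cdot g$ where $d=|\Outdiag(S)|$, $f$ is the field-automorphism-group order, and $g\in\{1,2,3\}$ accounts for graph automorphisms. One uses the factorization of $\alpha\in\Aut(S)$ into inner-diagonal, field and graph components (as set up in the notation section), and the standard structure of centralizers of semisimple and of field/graph automorphisms: a semisimple inner-diagonal automorphism centralizes a reductive subgroup whose fixed-point order in $S$ grows with $q$, a field automorphism of order $k$ has centralizer containing a copy of the group over the subfield $\IF_{q^{1/k}}$, and graph automorphisms centralize subgroups of the type $\mathrm{Sp}$, $\mathrm{SO}$, $G_2$ etc. In every case the centralizer in $\Aut(S)$ has order growing at least polynomially in $q$ and exponentially in $r$, while $|\Out(S)|$ grows only like $d\cdot f\cdot g$ with $d\le r+1$, $g\le 3$, and $f=\log_p q$ — so $|\Out(S)|/\MCS(\Aut(S))\to 0$ as $r\to\infty$ or $p\to\infty$ (and for fixed small $r$ and $p$, as $f\to\infty$, though that last regime is exactly where $S$ stays "bounded" in the sense of (2)(b) and one only needs the crude $\tfrac{18}{19}$ bound). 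Checking that the bound is actually $\le\frac{18}{19}$ rather than merely $<1$ requires pinning down the few smallest cases in each family, e.g.\ $\PSL_2(q)$ for small $q$, where $|\Out|$ and the minimal centralizer are genuinely comparable; there the extremal ratio $\frac{18}{19}$ presumably comes from one specific small group.

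The main obstacle, as usual in CFSG-based arguments, is the bookkeeping for the classical and exceptional groups of Lie type over small fields: one must be careful that $\MCS(\Aut(S))$ — the minimum over \emph{all} automorphisms, including unipotent ones, products of diagonal and graph-field automorphisms, etc.\ — is still large enough. Unipotent elements have centralizers of order divisible by a large power of $p$, so they are never the bottleneck; the delicate elements are regular semisimple ones and field automorphisms of large order, and for these the relevant centralizer orders are read off from the structure of maximal tori and of subfield subgroups. I expect the argument to require invoking explicit formulas for $|\Out(S)|$ for each Lie type (the $\delta_{[\cdot]}$-notation set up in the introduction is clearly meant for this), together with lower bounds of the shape $|\C_{\Inndiag(S)}(x)|\ge q^{r}/d$ for semisimple $x$ and $|\C_{\Aut(S)}(\varphi)|\ge |S_0|$ for a field automorphism $\varphi$ with fixed subfield giving subgroup $S_0$; once these are in place, statement (2) is immediate and statement (1) reduces to a finite computation over the groups of bounded rank and characteristic, whose minimal centralizer orders can be extracted from the ATLAS or from \cite{GLS98a}.
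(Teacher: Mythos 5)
Your high-level strategy---invoking the classification and reducing to family-by-family lower bounds on element centralizer orders in $\Aut(S)$---is the right one and matches the paper's approach. But there is a genuine gap right at the heart of your reduction. You propose to deduce both parts of the lemma from $\MCS(\Aut(S))\geq\tfrac{19}{18}|\Out(S)|$, via the crude inequality $|\alpha^{\Aut(S)}\cap S\beta|\leq|\alpha^{\Aut(S)}|=|\Aut(S)|/|\C_{\Aut(S)}(\alpha)|$. This reduction fails for some simple groups: the paper singles out $S=\PSL_3(4)$, of order $20160$, for which $\Aut(S)$ has a conjugacy class of length $24192>|S|$. For such $S$ one has $\MCS(\Aut(S))<|\Out(S)|$, so the bound $|\Out(S)|/\MCS(\Aut(S))$ exceeds $1$ and gives nothing. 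What saves the lemma is that such an oversized class necessarily distributes over several $S$-cosets, so the proportion inside any single coset drops. Quantifying this requires tracking the length of the image of $\alpha^{\Aut(S)}$ in $\Out(S)$: since the class meets each coset it touches in equally many elements, one actually has
\[
\frac{|\alpha^{\Aut(S)}\cap S\beta|}{|S|}\;=\;\frac{|\Out(S)|}{|\alpha^{\Out(S)}|\cdot|\C_{\Aut(S)}(\alpha)|},
\]
and the extra factor $|\alpha^{\Out(S)}|$ is exactly what you need when $|\C_{\Aut(S)}(\alpha)|$ is small. The paper spends a large part of its case analysis (the $\PSL_3(2^f)$, $\PSU_3(2^f)$, $D_4$, $E_6$, $\leftidx{^2}E_6$ cases, among others) distinguishing out-central from non-out-central automorphisms precisely to exploit this factor; your plan as written does not produce the constant $\tfrac{18}{19}$ in those cases.

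A secondary issue: your parenthetical claim that $|\Out(S)|/\MCS(\Aut(S))\to 0$ as $f\to\infty$ with $r,p$ fixed is false, and the paper explicitly warns about this. A field automorphism of $S$ of order $f$ has centralizer in $\Inndiag(S)$ of size bounded by a constant (the group over the prime field), so $|\C_{\Aut(S)}(\alpha)|$ grows only linearly in $f$ while $|\Out(S)|$ does too; the ratio stays bounded away from $0$. That is precisely why part (2)(b) can only control the untwisted rank and the characteristic, not the field parameter $f$; you state the correct conclusion but the wrong asymptotic reason. Your treatment of the alternating and sporadic cases, and your identification of Hartley-style centralizer descriptions and Fulman--Guralnick bounds as the main Lie-type tools, are all in line with the paper. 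One small inaccuracy: the constant $\tfrac{18}{19}$ arises in the paper from the out-central cases in $E_6$ and $\leftidx{^2}E_6$, not from one specific small group over a small field.
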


Note that in Lemma \ref{mainLem4}(2) and in case $S=\leftidx{^{t}}X_r(p^{f\cdot t})$ is of Lie type, we are \emph{not} asserting that the \enquote{field extension parameter} $f$ is bounded in terms of $\rho$, which is actually not true. As a consequence, while Theorem \ref{mainTheo}(1) follows more or less directly from Lemma \ref{mainLem4}(1) and the previous three lemmas, Theorem \ref{mainTheo}(2) will still require some additional ideas to prove.

As mentioned in the Overview (Subsection \ref{subsec1P2}), the proof of Lemma \ref{mainLem4}(1) will use the classification of the finite simple groups, and particularly for the Lie type case, it is quite laborious. Before starting with the actual proof, we review the basic proof idea and some results from the literature that will be frequently used in the proof.

The basic proof strategy for Lemma \ref{mainLem4} is to derive, for a nonabelian finite simple group $S$, sufficiently good uniform lower bounds on the sizes of element centralizers in $\Aut(S)$, i.e., lower bounds on $\MCS(\Aut(S))$. For example, if we can show that $\MCS(\Aut(S))\geq\frac{19}{18}|\Out(S)|$, then actually all conjugacy classes in $\Aut(S)$ are of length at most $\frac{18}{19}|S|$, so that in particular, $\h(S)\leq\frac{18}{19}$. Unfortunately, this does not always hold (there actually are nonabelian finite simple groups $S$ such that some $\Aut(S)$-conjugacy classes are of length larger than $|S|$, for example $S=A_2(4)=\PSL_3(4)$, which is of order $20160$ and whose automorphism group has a conjugacy class of length $24192$), so sometimes, one needs more sophisticated arguments studying how many $S$-cosets in $\Aut(S)$ a given $\Aut(S)$-conjugacy class intersects (which then breaks down the number of elements per $S$-coset by a certain factor).

For sporadic and alternating groups, this approach works without difficulties and without the need to use results from the literature except, of course, the lexicographical information on sporadic groups (and on $\Aut(\Alt_6)$) from the ATLAS of Finite Group Representations \cite{ATLAS}. For Lie type groups, on the other hand, we will heavily rely on three types of known results:

\begin{itemize}
\item Hartley's descriptions of automorphism centralizers from \cite[Propositions 4.1 and 4.2]{Har92a},
\item Fulman-Guralnick's bounds on the orders of centralizers of inner diagonal automorphisms of classical and Chevalley groups from \cite[Section 6]{FG12a}, and
\item some general techniques for deriving lower bounds on the orders of centralizers of inner diagonal automorphisms of finite simple groups of Lie type developed by Hartley and Kuzucuo\u{g}lu in \cite[proof of Theorem A1, pp.~319f.]{HK91a}.
\end{itemize}

As for Hartley's results, we will require a more detailed version of \cite[Propositions 4.1 and 4.2]{Har92a}, which we give now and which follows by studying the proofs of these results in \cite{Har92a}:

\begin{propposition}\label{hartleyProp}
Let $S$ be a finite simple group of Lie type with defining characteristic $p$, so that $S=O^{p'}(\overline{S}_{\sigma})$ for some simple linear algebraic group $\overline{S}$ over $\overline{\IF_p}$ of adjoint type and a Lang-Steinberg map $\sigma$ on $\overline{S}$. Write $\sigma=\nu\gamma$, where $\nu$ is a Frobenius map (\enquote{Frobenius map in $\Phi$} in the terminology and notation of \cite[Section 4, p.~110]{Har92a}) on $\overline{S}$ and $\gamma$ is a (possibly trivial) graph automorphism of $\overline{S}$ (i.e., an element of $\Gamma$ in the notation of \cite[Section 4, p.~110]{Har92a}).

First, assume additionally that $\overline{S}$ is not of type $B_2$, $F_4$ or $G_2$. Then let $\alpha=s\phi\delta$ be an automorphism of $S$ with $s\in\overline{S}_{\sigma}$, $\phi$ a field automorphism of $S$ and $\delta$ a graph automorphism of $S$. Set $g:=\ord(\phi)$. Then the following hold:

\begin{enumerate}
\item If $\gamma=1$ (i.e., $t(\sigma)=1$), and either $\delta=1$, or $\delta\not=1$ and $\ord(\delta)\nmid g$, then there is another Lang-Steinberg map $\mu$ on $\overline{S}$ such that $\mu(\alpha^g)=\alpha^g$, $\C_{\overline{S}_{\sigma}}(\alpha)=\C_{\overline{S}_{\mu}}(\alpha^g)$, $q(\mu)^g=q(\sigma)$ and $t(\mu)=1$.
\item If $\gamma=1$ (i.e., $t(\sigma)=1$) and $\delta\not=1$ and $\ord(\delta)\mid g$, then there is another Lang-Steinberg map $\mu$ on $\overline{S}$ such that $\mu(\alpha^g)=\alpha^g$, $\C_{\overline{S}_{\sigma}}(\alpha)=\C_{\overline{S}_{\mu}}(\alpha^g)$, $q(\mu)^g=q(\sigma)$ and $t(\mu)=\ord(\delta)>1$.
\item If $\gamma\not=1$ (i.e., $t(\sigma)>1$) and $\ord(\gamma)\nmid g$, then there is another Lang-Steinberg map $\mu$ on $\overline{S}$ such that $\mu(\alpha^g)=\alpha^g$, $\C_{\overline{S}_{\sigma}}(\alpha)=\C_{\overline{S}_{\mu}}(\alpha^g)$, $q(\mu)^g=q(\sigma)$ and $t(\mu)=\ord(\gamma)>1$.
\item If $\gamma\not=1$ (i.e., $t(\sigma)>1$) and $\ord(\gamma)\mid g$, then writing $g=\ord(\gamma)s$ and $\alpha^s=g_2\phi^s$ with $g_2\in\overline{S}_{\mu}$ (note that necessarily $\delta=1$ here) and letting $\gamma'$ be the graph automorphism of $\overline{S}$ that agrees with $\phi^s$ on $S$, there is another Lang-Steinberg map $\mu$ on $\overline{S}$ such that $\mu(g_2\gamma')=g_2\gamma'$, $\C_{\overline{S}_{\sigma}}(\alpha)=\C_{\overline{S}_{\mu}}(g_2\gamma')$, $q(\mu)^s=q(\mu)^{g/\ord(\gamma)}=q(\sigma)$ and $t(\mu)=1$.
\end{enumerate}

Now assume additionally that $\overline{S}$ is of type $B_2$, $F_4$ or $G_2$. Let $\alpha=s\phi$ be an automorphism of $S$ with $s\in\overline{S}_{\sigma}$ and $\phi$ a graph-field automorphism of $S$. Set $g:=\ord(\phi)$. Then there is another Lang-Steinberg map $\mu$ on $\overline{S}$ such that $\mu(\alpha^g)=\alpha^g$, $\C_{\overline{S}_{\sigma}}(\alpha)=\C_{\overline{S}_{\mu}}(\alpha^g)$ and $q(\mu)^g=q(\sigma)$.\qed
\end{propposition}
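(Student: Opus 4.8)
The plan is to obtain the statement by re-running the arguments behind \cite[Propositions 4.1 and 4.2]{Har92a} while explicitly recording the auxiliary Lang--Steinberg map that those arguments implicitly produce. First I would set up the standard lifting of $\alpha=s\phi\delta$ to maps on $\overline{S}$: the inner diagonal component acts by the conjugation $\widehat{s}$ with $s\in\overline{S}_{\sigma}$; the field component $\phi$, of order $g$, is the restriction to $S$ of a genuine Frobenius endomorphism $\Phi$ of $\overline{S}$ whose $g$-th power is a Frobenius map with $q$-value $q(\sigma)$ (in the untwisted case, $\Phi^{g}=\nu$); and the graph component $\delta$ is the restriction of a genuine finite-order graph automorphism of $\overline{S}$. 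These maps do not literally commute, but satisfy the braiding relations $\Phi\widehat{s}=\widehat{\Phi(s)}\Phi$ and the analogues for $\delta$, $\nu$, $\gamma$, which is all one needs.

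Next I would compute $\alpha^{g}$ inside $\Aut(S)$ by a telescoping product, obtaining $\alpha^{g}=\widehat{N}\cdot\Phi^{g}\cdot\delta^{g}$, where $N$ is a norm-type product of $\Phi$- (and, in the twisted case, $\gamma$- and $\delta$-) conjugates of $s$; since $\Phi^{g}$ restricts to the identity on $\overline{S}_{\sigma}$ up to the twist already carried by $\sigma$, restricting back to $S$ shows that $\alpha^{g}$ has trivial field component and is inner diagonal composed with the surviving graph twist. This surviving twist is $\delta^{g}$ when $\gamma=1$ (so $\alpha^{g}$ is inner diagonal iff $\ord(\delta)\mid g$, which is the split between cases (1) and (2)), and it involves $\gamma$ when $\gamma\neq1$ (inner diagonal iff $\ord(\gamma)\mid g$, the split between cases (3) and (4), in the last of which one works with the smaller power $\alpha^{s}$ and reinterprets its surviving field component as the graph automorphism $\gamma'$). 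For types $B_{2}$, $F_{4}$, $G_{2}$ the picture degenerates: $\phi$ is a graph--field automorphism, $p\in\{2,3\}$, there is no independent graph component, $\phi^{g}$ is a field automorphism, and $\alpha^{g}$ is automatically inner diagonal --- hence no claim about $t(\mu)$ is needed.

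Then I would construct $\mu$: in each case it is built from the Frobenius part $\Phi$ of a suitable $\overline{S}$-conjugate of $\alpha$ together with exactly the graph twist surviving in $\alpha^{g}$. So $\mu$ is (the lift of) $\alpha$ itself when $\ord(\delta)\mid g$, giving $t(\mu)=\ord(\delta)$, and the untwisted map $\widehat{s}\Phi$ otherwise, giving $t(\mu)=1$, with the analogous dichotomy for $\gamma$ in the twisted cases. The identity $q(\mu)=q(\Phi)=q(\sigma)^{1/g}$ gives $q(\mu)^{g}=q(\sigma)$, and $\mu(\alpha^{g})=\alpha^{g}$ follows because $\alpha^{g}$ is the fixed-point datum attached to $\mu^{g}$ and $\mu$ commutes with $\mu^{g}$. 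The heart of the verification is the centralizer identity $\C_{\overline{S}_{\sigma}}(\alpha)=\C_{\overline{S}_{\mu}}(\alpha^{g})$: iterating the braiding relation $g$ times shows that any $x\in\overline{S}$ fixed by $\alpha$ satisfies $\sigma(x)=N^{-1}xN$, where $\widehat{N}=\alpha^{g}$; hence, among elements fixed by $\alpha$ (equivalently by $\mu$), being fixed by $\sigma$ is equivalent to commuting with the element $\alpha^{g}$, which is exactly the asserted equality.

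The step I expect to be the main obstacle is purely the bookkeeping, and specifically getting the twisted cases (3) and (4) right --- there $\sigma$ already carries the graph part $\gamma$, so $\gamma$, $\delta$ and $\Phi$ interact in the telescoping product --- together with correctly tracking the trade-off in the untwisted case between absorbing the graph twist into $\mu$ (case (2), $t(\mu)=\ord(\delta)$) and leaving it on $\alpha^{g}$ (case (1), $t(\mu)=1$), and checking that $q(\mu)^{g}=q(\sigma)$ is read off consistently even for the Suzuki and Ree groups, where $q(\sigma)$ is a half-integer power of $p$. None of this is conceptually deep, but it is exactly the information not displayed in \cite{Har92a}, which has to be recovered by inspecting the proofs there. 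Once it is in hand, the proposition does what it is designed to do: it replaces any automorphism-centralizer computation for a Lie type group by one for an inner diagonal automorphism, at the cost of passing to a subfield ($q(\mu)=q(\sigma)^{1/g}$) and possibly an extra graph twist ($t(\mu)>1$).
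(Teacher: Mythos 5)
The paper gives no proof of Proposition~\ref{hartleyProp}: it is stated with a \qed{} and the preceding sentence merely asserts that it ``follows by studying the proofs of [Hartley's Propositions~4.1 and~4.2]''. Your proposal is therefore a reconstruction rather than a rederivation of something in the paper, but your overall plan (telescope $\alpha^g$, use the Lang--Steinberg device to trade the field part of $\alpha$ for a smaller fixed-point subgroup $\overline{S}_\mu$, then verify the centralizer identity by iterating the braiding relation so that $\sigma(x)=N^{-1}xN$ with $\widehat{N}$ the inner part of $\alpha^g$) is indeed the content of Hartley's argument, and your verification of the centralizer identity is the right one.

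There is, however, a concrete soft spot in the way you specify $\mu$ in case~(1) when $\delta\neq1$ and $\ord(\delta)\nmid g$. Since $\gcd(\ord(\delta),g)=1$ there, $\delta^g\neq1$, so the lift of $\alpha^g$ to $\overline{S}$ is $\widehat{N}\,\nu\,\Delta^{g'}$ with a nontrivial graph factor $\Delta^{g'}$, not an element of $\overline{S}$. Your candidate $\mu=\widehat{s}\Phi$ differs from the lift $\widehat{s}\Phi\Delta$ of $\alpha$ by $\Delta^{-1}$, and for $\mu$ to commute with the lift of $\alpha^g$ (which is what $\mu(\alpha^g)=\alpha^g$ must mean) one would need $\Delta(N)=N$, i.e.\ $\Delta(s)=s$, which is not given. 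So ``$\mu$ is $\widehat{s}\Phi$ and therefore $t(\mu)=1$'' is not a verification but a restatement of the desired conclusion; the actual $\mu$ Hartley produces here involves a Lang--Steinberg conjugation that simultaneously normalizes the Frobenius part and arranges compatibility with the surviving graph twist on $\alpha^g$, and recovering $t(\mu)=1$ requires tracking that conjugation explicitly. The same concern does not arise in your treatment of cases~(2)--(4) (where $\mu$ really is a lift of a power of $\alpha$ and everything you wrote is fine) nor in the $B_2$, $F_4$, $G_2$ paragraph. Also note that your justification of $\mu(\alpha^g)=\alpha^g$ via ``$\mu$ commutes with $\mu^g$'' is correct in the clean cases precisely because $\sigma(s)=s$ forces $\sigma$ and $\mu$ to commute; it is worth spelling that out, since commuting with $\mu^g$ alone does not give the claim.
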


In its given form, Proposition \ref{hartleyProp} allows us to determine the isomorphism type of $\overline{S}_{\mu}$ from $S=O^{p'}(\overline{S}_{\sigma})$, the field resp.~graph-field component order $g$, the defining characteristic $p$ (which, except in a few small cases, is uniquely determined by the abstract group isomorphism type of $S$) and the knowledge of which of the cases that are distinguished applies.

Let us now discuss the general methods for obtaining lower bounds on centralizer orders of inner diagonal automorphisms. As mentioned above, these are essentially due to Hartley and Kuzucuo\u{g}lu from \cite{HK91a}, but the part of their paper where the methods were introduced was concentrated on inner automorphisms only. The arguments for inner diagonal automorphisms in general are mostly analogous, but we give them here for the reader's convenience. Let $S=\leftidx{^t}X_r(p^{f\cdot t})=O^{p'}(X_r(\overline{\IF_p})_{\sigma})$ be a finite simple group of Lie type, and let $\alpha\in X_r(\overline{\IF_p})_{\sigma}$ be an inner diagonal automorphism of $S$. Set $\overline{q}(\sigma):=\min(\{q(\sigma)^e\mid e\in\IN^+\}\cap\IZ)$, so that $\overline{q}(\sigma)=q(\sigma)$ unless $S$ is one of the Suzuki or Ree groups, in which case $\overline{q}(\sigma)=q(\sigma)^2$. We make a case distinction:

\begin{enumerate}
\item If $\alpha$ is semisimple, i.e., if $p\nmid\ord(\alpha)$, then $\C_{X_r(\overline{\IF_p})_{\sigma}}(\alpha)$ contains the subgroup $T_{\sigma}$ of $\sigma$-fixed points of some $\sigma$-invariant maximal torus $T$ of $X_r(\overline{\IF_p})$. Therefore and by \cite[Lemma 3.3]{Har92a}, we then have $|\C_{X_r(\overline{\IF_p})_{\sigma}}(\alpha)|\geq(q(\sigma)-1)^r$.
\item If $\alpha$ is not semisimple, i.e., if $p\mid\ord(\alpha)$, then write $\alpha=\beta\gamma$ with $\beta\in X_r(\overline{\IF_p})_{\sigma}$ semisimple, $1\not=\gamma\in X_r(\overline{\IF_p})_{\sigma}$ unipotent and $[\beta,\gamma]=1$ (Jordan decomposition). Since $\gamma\in\C_{X_r(\overline{\IF_p})_{\sigma}}(\beta)$, we have that the group $P:=O^{p'}(\C_{X_r(\overline{\IF_p})_{\sigma}}(\beta))$ is nontrivial. Hence by \cite[Lemma 3.3 and its proof, Theorem 4.2]{HK91a}, $P$ is a central product $U_1\cdots U_r$ with $r\geq 1$ and such that for each $j\in\{1,\ldots,r\}$, there exists a simple linear algebraic group $S_j$ over $\overline{\IF_p}$, not necessarily of adjoint type, and a Lang-Steinberg map $\sigma_j$ on $S_j$ such that $U_j$ is a quotient of $O^{p'}((S_j)_{\sigma_j})$ by a subgroup of its center and $q(\sigma_j)$ is a power of $q(\sigma)$ (note that this last property is not stated in \cite[Lemma 3.3]{HK91a}, but it follows from the last paragraph of its proof, which is also the last paragraph of \cite[Section 3]{HK91a}). Consequently, $\overline{q}(\sigma_j)$ is a power of $\overline{q}(\sigma)$. Now since $\gamma\in P$, we can write $\gamma=\gamma_1\cdots\gamma_r$ with $\gamma_j$ contained in some Sylow $p$-subgroup $Q_j$ of $U_j$, $j=1,\ldots,r$. By \cite[Lemma 3.2]{HK91a} and the fact that $\zeta{P}$ is a $p'$-group, we find that $Q_j$ is isomorphic to a Sylow $p$-subgroup of $O^{p'}((\widetilde{S_j})_{\widetilde{\sigma_j}})$, where $\widetilde{S_j}$ is the simple linear algebraic group of adjoint type over $\overline{\IF_p}$ of the same isogeny type as $S_j$, and $\widetilde{\sigma_j}$ is a Lang-Steinberg map of $\widetilde{S_j}$ such that $q(\widetilde{\sigma_j})=q(\sigma_j)$ and $t(\widetilde{\sigma_j})=t(\sigma_j)$. Therefore, as in \cite[proof of Theorem A1, p.~320]{HK91a}, for each $j\in\{1,\ldots,r\}$, we have the inclusions $\zeta{Q_j}\leq\C_{Q_j}(\gamma_j)\leq\C_{X_r(\overline{\IF_p})_{\sigma}}(\alpha)$ and $\zeta{Q_j}$ contains an isomorphic copy of the additive group of $\IF_{\overline{q}(\sigma_j)}$. In particular, $|\C_{X_r(\overline{\IF_p})_{\sigma}}(\alpha)|\geq\MCS(Q_1)\cdot\ord(\beta)\geq \overline{q}(\sigma)$.
\end{enumerate}

Let us condense this information into a convenient overview of lower bounds on centralizer orders that we will use throughout the proof of Lemma \ref{mainLem4}:

\begin{propposition}\label{boundsProp}
Let $S=O^{p'}(X_r(\overline{\IF_p})_{\sigma})$ be a finite simple group of Lie type.

\begin{enumerate}
\item We have $\MCS(X_r(\overline{\IF_p})_{\sigma})\geq q(\sigma)-1$.
\item If $r>1$, then $\MCS(X_r(\overline{\IF_p})_{\sigma})\geq q(\sigma)$.
\item If $r>2$, then $\MCS(X_r(\overline{\IF_p})_{\sigma})\geq 2\cdot q(\sigma)$
\end{enumerate}
\end{propposition}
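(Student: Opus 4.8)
\textbf{Proof plan for Proposition \ref{boundsProp}.}

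The plan is to extract all three bounds directly from the case analysis presented in the paragraphs immediately preceding the statement, which already contains essentially all the content; the proposition is a convenient repackaging. First I would recall the two cases for an inner diagonal automorphism $\alpha\in X_r(\overline{\IF_p})_{\sigma}$: if $\alpha$ is semisimple, then $\C_{X_r(\overline{\IF_p})_{\sigma}}(\alpha)$ contains the torus-of-fixed-points $T_{\sigma}$ for some $\sigma$-invariant maximal torus $T$, and by \cite[Lemma 3.3]{Har92a} we get $|\C_{X_r(\overline{\IF_p})_{\sigma}}(\alpha)|\geq(q(\sigma)-1)^r$; if $\alpha$ is not semisimple, Jordan decomposition plus the Hartley--Kuzucuo\u{g}lu structure theory gives $|\C_{X_r(\overline{\IF_p})_{\sigma}}(\alpha)|\geq\overline{q}(\sigma)\geq q(\sigma)$. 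Since $\MCS(X_r(\overline{\IF_p})_{\sigma})$ is the minimum of $|\C_{X_r(\overline{\IF_p})_{\sigma}}(\alpha)|$ over \emph{all} elements $\alpha$ (not just inner diagonal ones), I must first note that $X_r(\overline{\IF_p})_{\sigma}=\Inndiag(S)$, so every element of this group \emph{is} an inner diagonal automorphism of $S$; hence the two bounds above apply to every element centralizer in the group, and we may take the worst case.

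For part (1): combining the two cases, every centralizer has order at least $\min\{(q(\sigma)-1)^r,\ q(\sigma)\}$. Since $r\geq1$ always, $(q(\sigma)-1)^r\geq q(\sigma)-1$, and $q(\sigma)\geq q(\sigma)-1$, so the minimum is at least $q(\sigma)-1$, giving (1). For part (2), under the hypothesis $r>1$ we have $r\geq2$, so in the semisimple case $(q(\sigma)-1)^r\geq(q(\sigma)-1)^2$; one checks $(q(\sigma)-1)^2\geq q(\sigma)$ whenever $q(\sigma)\geq3$ (equivalently $q(\sigma)-1\geq\sqrt{q(\sigma)}$, which holds for $q(\sigma)\geq3$), while for $q(\sigma)=2$ one needs the separate observation that $(2-1)^2=1$ is too small, so the bound $(q(\sigma)-1)^r\geq q(\sigma)$ must instead be obtained by a direct check that $\Inndiag$ of a rank $\geq2$ group over $\IF_2$ has minimal centralizer order at least $2$ — which is immediate since the group is nontrivial, so no centralizer of a nonidentity element is all of the group, and in any case $q(\sigma)=2$ forces $|\C|\geq2=q(\sigma)$ by the nonsemisimple bound and by the fact that a semisimple element of order coprime to $2$ in a rank $\geq2$ group lies in a torus $T_\sigma$ of order divisible by a factor that is still at least $2$ when $r\geq 2$; a cleaner route is simply: any nontrivial finite group has $\MCS\geq 2$, and $q(\sigma)=2$ in case $r>1$. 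For part (3), with $r>2$ so $r\geq3$: in the semisimple case $(q(\sigma)-1)^r\geq(q(\sigma)-1)^3\geq2q(\sigma)$ for $q(\sigma)\geq3$ (since $(q-1)^3/q$ is increasing and equals $8/3>2$ at $q=3$), and for $q(\sigma)=2$ one has $\MCS\geq2\cdot2=4$ by the same kind of small-case check (the relevant groups of rank $\geq 3$ over $\IF_2$, e.g.\ $A_r(2)$, $B_r(2)$, $C_r(2)$, $D_r(2)$, can be verified to have minimal element-centralizer order at least $4$); in the nonsemisimple case one must upgrade the bound $|\C|\geq\overline{q}(\sigma)$ to $|\C|\geq2\overline{q}(\sigma)$ when $r\geq3$, using that the central product $U_1\cdots U_r$ arising there has, by \cite[Theorem 4.2]{HK91a}, enough factors/rank to force the centralizer of the unipotent part to be at least twice as large — concretely, either $\ord(\beta)\geq2$ (so $|\C|\geq\MCS(Q_1)\cdot\ord(\beta)\geq2\overline{q}(\sigma)$), or $\beta=1$ and $\alpha=\gamma$ is a nontrivial unipotent element of a rank $\geq3$ group, whose centralizer contains the full center of a Sylow $p$-subgroup which for $r\geq3$ has order at least $2\overline{q}(\sigma)$.

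The main obstacle I anticipate is the handling of the small field case $q(\sigma)=2$ (and to a lesser extent $q(\sigma)=3$ for part (3)), where the clean exponential estimate $(q-1)^r$ degenerates; there one cannot avoid invoking either a slightly more refined statement from the Hartley--Kuzucuo\u{g}lu machinery about the number and isogeny types of the factors $U_j$, or a short finite check across the relevant families of small-rank Lie type groups over $\IF_2$ and $\IF_3$. I would organize the write-up so that the generic estimate $(q(\sigma)-1)^r$ versus $\overline{q}(\sigma)$ does all the work for $q(\sigma)\geq3$ in parts (1) and (2) and $q(\sigma)\geq4$ in part (3), and then dispatch the finitely many residual $(q(\sigma),r)$ pairs by the elementary observation that a nontrivial finite group has $\MCS\geq2$ together with, where needed, the explicit centralizer-order data for the handful of small Lie type groups involved (this is the same ATLAS-style bookkeeping already used elsewhere in the proof of Lemma \ref{mainLem4}).
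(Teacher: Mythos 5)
Your plan is structurally the same as the paper's proof: both rely on the dichotomy semisimple versus non-semisimple established just before the proposition (giving the lower bounds $(q(\sigma)-1)^r$ and $\overline{q}(\sigma)$ respectively), and both dispatch the small-$q(\sigma)$ leftovers by direct checks. Two things would need to be fixed in your write-up. First, your case split for part (3) is \enquote{$q(\sigma)\geq 3$ versus $q(\sigma)=2$}, but $q(\sigma)$ need not be an integer: you must also handle $q(\sigma)=\sqrt{2}$, i.e.\ $S=\leftidx{^2}F_4(2)$ (untwisted rank $4$, so it falls under part (3)), which the paper treats with a separate Sylow-$2$ argument yielding $\MCS\geq 3>2\sqrt{2}$; and $q(\sigma)=\sqrt{8}$ sits strictly between $2$ and $3$, so $(q(\sigma)-1)^3\geq 2q(\sigma)$ needs to be verified there too (it does hold, and the paper simply observes $q(\sigma)>2\Rightarrow q(\sigma)\geq\sqrt{8}$ to make one uniform estimate). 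Second, and more substantively, your claim in the unipotent subcase of (3) that the center of a Sylow $p$-subgroup has order at least $2\overline{q}(\sigma)$ when $r\geq 3$ is false: for $\PSL_4(q)$ the center of a Sylow $p$-subgroup is a single root subgroup of order exactly $q$. The paper's argument is different and does work: since $Q$ is a proper supergroup of the copy of $(\IF_{\overline{q}(\sigma)},+)$ sitting inside $\zeta Q$, one deduces (distinguishing whether $\alpha\in\zeta Q$ or not) that $\C_Q(\alpha)$ is also a proper supergroup of that copy, and being a $p$-group its order is then a proper multiple of $\overline{q}(\sigma)$, hence at least $p\cdot\overline{q}(\sigma)\geq 2q(\sigma)$. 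Both repairs are local and leave your overall structure, which otherwise matches the paper's, intact.
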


\begin{proof}
For (1): This is clear by the above considerations.

For (2): This is clear by the above considerations and a simple case distinction of \enquote{$q(\sigma)\leq 2$} (where the inequality is trivial) versus \enquote{$q(\sigma)>2$} (in which case $q(\sigma)\geq\sqrt{8}$ and thus $(q(\sigma)-1)^r\geq q(\sigma)$).

For (3): We make a case distinction:

\begin{enumerate}
\item Case: $q(\sigma)<2$. Then $q(\sigma)=\sqrt{2}$, $S=\leftidx{^2}F_4(2)=\Inndiag(S)$, and and it is sufficient to show that $\MCS(S)\geq 3$. But each involution $s\in S$ lies in some Sylow $2$-subgroup $P$ of $S$. Moreover, $\C_S(s)$ contains $\C_P(s)$, which in turn contains $\zeta P$, which contains an isomorphic copy of $\IZ/2\IZ$. But $|P|=2^{12}>2^1$, and so $\C_P(s)$ certainly is a proper supergroup of that copy of $\IZ/2\IZ$ (which one sees in a simple case distinction \enquote{$s\in\zeta P$} versus \enquote{$s\notin\zeta P$}). Hence involution centralizers in $S$ have order divisible by $4$, and all other element centralizers clearly have order at least $3$ as well.

\item Case: $q(\sigma)=2$. Then it is sufficient to show that $\MCS(X_r(\overline{\IF_2})_{\sigma})\geq 4$. But by looking through the list of finite simple groups of Lie type of untwisted Lie rank at least $3$ and the formulas for their orders, one sees that $2^23^2\mid|S|\mid|\Inndiag(S)|$, and so by an argument as in the previous case, one sees that involutions resp.~elements of order $3$ in $\Inndiag(S)$ have centralizer orders divisible by $4$ resp.~$9$, and clearly, all other elements of $\Inndiag(S)$ also have centralizer order at least $4$.

\item Case: $q(\sigma)>2$. Then $q(\sigma)\geq\sqrt{8}$, and so it is easy to see that $(q(\sigma)-1)^r\geq 2q(\sigma)$. Therefore, we only need to consider non-semisimple automorphisms $\alpha=\beta\gamma$ as above. If $\beta\not=1$, then the centralizer is of order at least $2\overline{q}(\sigma)\geq 2q(\sigma)$ by the above considerations, so assume $\beta=1$, so that $\alpha=\gamma$ is unipotent. Then the centralizer in $X_r(\overline{\IF_p})_{\sigma}$ of $\alpha$ contains the centralizer $C$ of $\alpha$ in some Sylow $p$-subgroup $Q$ of $X_r(\overline{\IF_p})_{\sigma}$ such that $\alpha\in Q$, and $C$ contains $\zeta{Q}$, which contains an isomorphic copy of $(\IF_{\overline{q}(\sigma)},+)$ by \cite[proof of Theorem A1, p.~320]{HK91a}. But since $r>1$, $Q$ must be a proper supergroup of that additive field group copy, and so $C$ certainly is a proper supergroup of it as well. So $|C|$, and thus $|\C_{X_r(\overline{\IF_p})_{\sigma}}(\alpha)|$, is a proper integer multiple of $\overline{q}(\sigma)$, and the assertion follows.
\end{enumerate}
\end{proof}

As a final preparatory remark before starting with the proof of Lemma \ref{mainLem4}, we note that since we do not expect the constant $\frac{18}{19}$ to be optimal even as an upper bound for $\h(S)$, we indicate in each of the many cases that need to be distinguished an upper bound on $\h(S)$ that our arguments for that particular case give (and which is usually smaller than $\frac{18}{19}$), so readers who would like to try and prove a better upper constant upper bound on $\h(S)$ can easily identify the cases where more work needs to be done.

\begin{proof}[Proof of Lemma \ref{mainLem4}]
We go through the different types of nonabelian finite simple groups.

If $S$ is sporadic, we claim that $\max_{\alpha\in\Aut(S)}{\alpha^{\Aut(S)}}\leq\frac{2}{5}|S|$. Indeed, by $|\Out(S)|\leq2$, it suffices to show that for each $\alpha\in\Aut(S)$, $|\C_{\Aut(S)}(\alpha)|\geq5$. This can be checked case by case using the ATLAS of Finite Group Representations \cite{ATLAS}.

Now assume that $S$ is alternating, say $S=\Alt_m$ for some $m\geq5$. Let us first verify the asymptotic statement (2,i). We claim that the function

\[
f_1:\left(0,1\right]\rightarrow\left(0,\infty\right),\rho\mapsto\max\{6,\lfloor\frac{1}{\rho}\cdot\lfloor\frac{2}{\rho}\rfloor\cdot(\lfloor\frac{2}{\rho}\rfloor+1)\rfloor+1\}
\]

has the property that for all $\rho\in\left(0,1\right]$: If $\h(\Alt_m)\geq\rho$, then $m\leq f_1(\rho)$. Indeed, assume $m>f_1(\rho)$, so that in particular $m\geq7$ and thus $\Aut(\Alt_m)=\Sym_m$. We claim that $\h(\Alt_m)<\rho$, which by the commutativity of $\Out(\Alt_m)\cong\IZ/2\IZ$ is equivalent to $\max_{\sigma\in\Sym_m}{|\sigma^{\Sym_m}|}<\rho|\Alt_m|$. Let $\sigma\in\Sym_m$. If $\ord(\sigma)>\frac{2}{\rho}$, then $|\C_{\Sym_m}(\sigma)|>\frac{2}{\rho}$, and so $|\sigma^{\Sym_m}|<\frac{\rho}{2}|\Sym_m|=\rho|\Alt_m|$. We may thus assume that $\ord(\sigma)\leq\frac{2}{\rho}$. Then every cycle of $\sigma$ has length at most $\frac{2}{\rho}$, and so $\sigma$ has at least

\[
\frac{m}{\sum_{i=1}^{\lfloor 2/\rho\rfloor}{i}}=\frac{m}{\frac{1}{2}\lfloor\frac{2}{\rho}\rfloor\cdot(\lfloor\frac{2}{\rho}\rfloor+1)}
\]

cycles of a common length, whence

\[
|\C_{\Sym_m}(\sigma)|\geq\lceil\frac{m}{\frac{1}{2}\lfloor\frac{2}{\rho}\rfloor\cdot(\lfloor\frac{2}{\rho}\rfloor+1)}\rceil!\geq\frac{m}{\frac{1}{2}\lfloor\frac{2}{\rho}\rfloor\cdot(\lfloor\frac{2}{\rho}\rfloor+1)}>\frac{2}{\rho},
\]

so that again, $|\sigma^{\Sym_m}|<\rho|\Alt_m|$, as required.

Now we verify statement (1) for $S=\Alt_m$. First, one checks that $\MCS(\Aut(\Alt_5))=\MCS(\Sym_5)=4$ and $\MCS(\Aut(\Alt_6))=6$ (for the latter, one can refer to the ATLAS of Finite Group Representations \cite{ATLAS}), which entail $\h(\Alt_5)=\frac{1}{2}$ and $\h(\Alt_6)=\frac{3}{4}$. So we may assume $m\geq7$, and we assert that then $\h(\Alt_m)\leq\frac{2}{5}$. Since $|\Out(S)|=2$, this is equivalent to showing that $\MCS(\Sym_m)\geq5$. Let $\sigma\in\Sym_m$. If $\ord(\sigma)=1$ or $\ord(\sigma)\geq5$, it is clear that $|\C_{\Sym_m}(\sigma)|\geq 5$, so assume $\ord(\sigma)\in\{2,3,4\}$. If $\ord(\sigma)=2$, then $\sigma$ has at least three cycles of a common length, so $|\C_{\Sym_m}(\sigma)|\geq 3!=6>5$. And if $\ord(\sigma)\in\{3,4\}$, then $|\C_{\Sym_m}(\sigma)|\geq\ord(\sigma)\cdot\MCS(\Sym_{m-
\ord(\sigma)})\geq\ord(\sigma)\cdot 2>5$.

Now we turn to the finite simple groups $S=\leftidx{^t}X_r(p^{f\cdot t})$ of Lie type. We first verify the asymptotic statement (2,ii). We will show the following, which is actually stronger than statement (2,ii): As either $r\to\infty$ or $p\to\infty$, the maximum conjugacy class length in $\Aut(S)$ is in $\o(|S|)$, or equivalently, $\MCS(\Aut(S))\in\omega(|\Out(S)|)$. First, assume that $r\to\infty$. Then we can restrict our attention to groups of untwisted Lie rank at least $9$, which are all classical, and so, by Proposition \ref{hartleyProp}, we then have

\begin{align*}
&\MCS(\Aut(\leftidx{^t}X_r(p^{f\cdot t})))\geq \\
&\begin{cases}\min_{g\mid f}(g\cdot\MCS(\Inndiag(X_r(p^{f/g})))), & \text{if }X_r\in\{B_s,C_s\mid s\geq9\}, \\ \min_{g\mid f,u\in\{1,2\}}(g\cdot\MCS(\Inndiag(\leftidx{^u}X_r(p^{uf/g})))), & \text{if }X_r\in\{A_s,D_s\mid s\geq9\}\text{ and }t=1, \\ \min_{g\mid 2f}(g\cdot\MCS(\Inndiag(\leftidx{^{2-\delta_{[2\mid g]}}}X_r(p^{2f/g})))), & \text{if }X_r\in\{A_s,D_s\mid s\geq9\}\text{ and }t=2.\end{cases}
\end{align*}

By \cite[Theorem 6.15]{FG12a} and using that $1+\log_{p^{f/g}}(r)\leq r$ for $p\in\IP$, $f\in\IN^+$, $g\mid f$ and $r\geq9$, we therefore have, for some universal constant $A>0$ and all $r\geq9$, $\MCS(\Aut(S))\geq\min_{g\mid f}{g\cdot\frac{p^{f/g\cdot(r-1)}}{Ar}}=f\cdot\frac{p^{r-1}}{Ar}\in\omega(r\cdot f)$ as $\max\{p,r\}\to\infty$, and since $|\Out(S)|\in\O(r\cdot f)$ as $|S|\to\infty$, we are done in this case.

Let us now assume that $p\to\infty$. By the proof for $r\to\infty$, we may also assume that $r\leq 8$, and so $|\Out(S)|\in\O(f)$ as $|S|\to\infty$. But by Proposition \ref{boundsProp}, we have $\MCS(\Aut(S))\in\Omega(p^f)$ as $|S|\to\infty$, and so $\MCS(\Aut(S))\in\omega(|\Out(S)|)$ as $p\to\infty$.

Let us now turn to the proof of statement (1) for finite simple groups $S$ of Lie type, i.e., that $\h(S)\leq\frac{18}{19}$. We go through various cases, and in each of them, we show that $\h(S)\leq c$ for some case-dependent explicit constant $c\in\left(0,\frac{18}{19}\right]$. Throughout this, we always assume that $\alpha$ is an arbitrary, but fixed automorphism of $S$ and that $g$ is the order of the field component of $\alpha$ \emph{unless} $S$ is one of $B_2(p^f)$, $G_2(p^f)$ or $F_4(p^f)$, in which case $g$ is assumed to be the order of the graph-field component of $\alpha$ (like in Proposition \ref{hartleyProp}; note that as Suzuki and Ree groups do not have nontrivial graph automorphisms, the field and graph-field component of an automorphism of any of them coincide).

\begin{enumerate}
\item Case: $S=A_1(p^f)=\PSL_2(p^f)$, so $|\Out(S)|=(2,p^f-1)f$ and $g\mid f$. We claim that then $\h(S)\leq\frac{3}{4}$. First, assume $p=2$, so that $|\Out(S)|=f$ and it suffices to show $|\C_{\Aut(S)}(\alpha)|\geq\frac{4}{3}f$. By Propositions \ref{hartleyProp}(1) and \ref{boundsProp}(1), $|\C_{\Aut(S)}(\alpha)|\geq g\cdot\MCS(\PGL_2(2^{f/g}))\geq g\cdot(2^{f/g}-1)$, and so it is sufficient to have either $\MCS(\PGL_2(2^{f/g}))\geq\frac{4}{3}\frac{f}{g}$ or the stronger $2^{f/g}\geq\frac{4}{3}\frac{f}{g}+1$. The latter holds as long as $\frac{f}{g}\geq 2$, and for $\frac{f}{g}=1$, the former is clearly true. Similarly, for $p>2$, where $|\Out(S)|=2f$, it suffices to have either $\MCS(\PGL_2(p^{f/g}))\geq\frac{8}{3}\frac{f}{g}$ or $p^{f/g}\geq\frac{8}{3}\frac{f}{g}+1$, and the latter holds unless $p=3$ and $\frac{f}{g}=1$, in which case the former holds ($\MCS(\PGL_2(3))=3$ by GAP \cite{GAP4}).
\item Case: $S=A_2(p^f)=\PSL_3(p^f)$, so $|\Out(S)|=(3,p^f-1)2f$ and $g\mid f$. We claim that then $\h(S)\leq\frac{12}{13}$. First, assume $p=3$, so that $|\Out(S)|=2f$. By Propositions \ref{hartleyProp}(1,2) and \ref{boundsProp}(2), we have

\[
|\C_{\Aut(S)}(\alpha)|\geq g\cdot\min\{\MCS(\PGL_3(3^{f/g})),\MCS(\PGU_3(3^{f/g}))\}\geq g\cdot 3^{f/g},
\]

so it is sufficient to have $3^{f/g}\geq\frac{13}{6}\frac{f}{g}$, which holds. Similarly, for $p\geq5$, using that $|\Out(S)|\leq 6f$, we see that it is sufficient to have either

\[
\min\{\MCS(\PGL_3(p^{f/g})),\MCS(\PGU_3(p^{f/g}))\}\geq\frac{13}{2}\frac{f}{g}
\]

or the stronger $p^{f/g}\geq \frac{13}{2}\frac{f}{g}$. The latter is true unless $p=5$ and $\frac{f}{g}=1$, for which one checks with GAP \cite{GAP4} that $\min\{\MCS(\PGL_3(5)),\MCS(\PGU_3(5))\}=\min\{16,21\}=16\geq\frac{13}{2}$. We are thus left with the case $p=2$, which will require some more delicate handling. First, since still $|\Out(S)|\leq6f$, we are satisfied if either $\min\{\MCS(\PGL_3(2^{f/g})),\MCS(\PGU_3(2^{f/g}))\}\geq \frac{13}{2}\frac{f}{g}$ or the stronger $2^{f/g}\geq \frac{13}{2}\frac{f}{g}$ holds. The latter is valid for $\frac{f}{g}\geq 6$, and for settling the cases $\frac{f}{g}\in\{3,4,5\}$, one computes $\MCS(\PGL_3(2^{f/g}))$ and $\MCS(\PGU_3(2^{f/g}))$ with GAP \cite{GAP4} (except for $\MCS(\PGU_3(32))$, the computation of which required too much RAM for the author's computer to handle, but \cite[Theorem 6.7]{FG12a} gives the sufficient lower bound $\MCS(\PGU_3(32))\geq\frac{32^3}{33}\sqrt{\frac{1-1/32^2}{\e(2+\log_{32}(4))}}=388.5715\ldots$; this approach also works for $\MCS(\PGU_3(16))$, where the GAP computations on the author's computer did finish, but took quite long), and under the assumption $2\nmid f$, where $|\Out(S)|=2f$ only, one can also settle the cases $\frac{f}{g}\in\{1,2\}$ with GAP by verifying that $\MCS(\PGL_3(4))=12$, $\MCS(\PGU_3(4))=13$, $\MCS(\PGL_3(2))=3$ and $\MCS(\PGU_3(4))=4$. We may thus henceforth assume that $2\mid f$ and $\frac{f}{g}\in\{1,2\}$. By \cite[Theorem 2.5.12(c--e), p.~58]{GLS98a}, each of the subsets $\Outdiag(S),\Phi_S,\Gamma_S\subseteq\Out(S)$ is actually a cyclic subgroup of $\Out(S)$ here, of order $3$, $f$ and $2$ respectively. Let $d$, $\phi$ and $\tau$ be fixed generators of these groups in the given order. Then, identifying $d$ with a lift of it in $\Aut(S)$, the automorphisms $d^a\phi^b\tau^c$, where $a\in\{0,1,2\}$, $b\in\{0,1,\ldots,f-1\}$ and $c\in\{0,1\}$, form a set of representatives for the $S$-cosets in $\Aut(S)$. Assume that $a,b,c$ are such that $\alpha$ is in the corresponding $S$-coset. We distinguish two subcases:

\begin{enumerate}
\item Subcase: $a=0$, and either $b$ is even and $c=0$, or $b$ is odd and $c=1$. By \cite[Theorem 2.5.12(b--e,g,i), p.~58]{GLS98a}, this is just the case where $\alpha$ is out-central, i.e., where $\alpha^{\Aut(S)}$ is contained in a single $S$-coset. But $\phi$ and $\tau$ actually commute not only in $\Out(S)$, but in $\Aut(S)$, and so the coset representative $\beta:=\phi^b\tau^c\in S\alpha$ has order $g=\ord(\phi^b)$. By Proposition \ref{hartleyProp}(1,2), we therefore get that $|\C_{\Aut(S)}(\beta)|\geq 2f\cdot|\C_{\PGL_3(2^f)}(\alpha)|=2f\cdot|\C_{A_2(\overline{\IF_2})_{\mu}}(1)|=2f\cdot|A_2(\overline{\IF_2})_{\mu}|>6f$, so $\beta^{\Aut(S)}$ is a proper subset of $S\beta=S\alpha$, and therefore, $\alpha^{\Aut(S)}$ must also be a proper subset of it, whence $|\C_{\Aut(S)}(\alpha)|>6f$ necessarily. Noting that $|\C_{\Aut(S)}(\alpha)|$ is also an integer multiple of $g\cdot|\C_{\PGL_2(2^f)}(\alpha)|$, we can conclude this subcase in the following subsubcase distinction:

\begin{itemize}
\item Subsubcase: $|\C_{\Aut(S)}(\alpha)|\geq 3g\cdot|\C_{\PGL_2(2^f)}(\alpha)|$. Then for $\frac{f}{g}=2$,

\[
|\C_{\Aut(S)}(\alpha)|\geq \frac{3}{2}f\cdot\min\{\MCS(\PGL_3(4)),\MCS(\PGU_3(4))\}=18f,
\]

and for $\frac{f}{g}=1$,

\[
|\C_{\Aut(S)}(\alpha)|\geq 3f\cdot\min\{\MCS(\PGL_3(2)),\MCS(\PGU_3(2))\}=9f.
\]
\item Subsubcase: $|\C_{\Aut(S)}(\alpha)|=2g\cdot|\C_{\PGL_2(2^f)}(\alpha)|$. Then for $\frac{f}{g}=2$, we get that

\[
|\C_{\Aut(S)}(\alpha)|\geq f\cdot\min\{\MCS(\PGL_3(4)),\MCS(\PGU_3(4))\}=12f,
\]

and for $\frac{f}{g}=1$, we get that $|\C_{\Aut(S)}(\alpha)|\geq 2f\cdot m$, where $m>3$ is the order of some element centralizer in one of the groups $\PGL_3(2)$ or $\PGU_3(2)$, so in particular, $m$ is an integer and therefore $m\geq4$, which yields $|\C_{\Aut(S)}(\alpha)|\geq 8f$.
\item Subsubcase: $|\C_{\Aut(S)}(\alpha)|=g\cdot|\C_{\PGL_2(2^f)}(\alpha)|$. Then for $\frac{f}{g}=2$, we get that $|\C_{\Aut(S)}(\alpha)|\geq \frac{f}{2}\cdot m$ for some integer $m>12$, so that $|\C_{\Aut(S)}(\alpha)|\geq\frac{13}{2}f$, and for $\frac{f}{g}=1$, we have $|\C_{\Aut(S)}(\alpha)|\geq f\cdot m'$ for some integer $m'>6$, so $|\C_{\Aut(S)}(\alpha)|\geq 7f$.
\end{itemize}

\item Subcase: $\alpha$ is \emph{not} out-central, i.e., $a\not=0$, or $2\mid b$ and $c=1$, or $2\nmid b$ and $c=0$. Then $\alpha^{\Aut(S)}$ distributes evenly among at least two $S$-cosets. Hence for $\frac{f}{g}=2$, where we have $|\alpha^{\Aut(S)}|\leq|S|$ by

\[
\min\{\MCS(\PGL_3(4)),\MCS(\PGU_3(4))\}=12=2\cdot 6,
\]

we are done since then the proportion of elements of $\alpha^{\Aut(S)}$ per coset is at most $\frac{1}{2}\leq\frac{12}{13}$. So assume $\frac{f}{g}=1$. Then $|\alpha^{\Aut(S)}|\leq 2|S|$, and so we are done if $|\alpha^{\Out(S)}|\geq 3$, whence we may assume $|\alpha^{\Out(S)}|=2$. We make a subsubcase distinction:

\begin{itemize}
\item Subsubcase: $a=0$. Then as above, the representative $\beta:=\phi^b\tau^c$ of the $S$-coset of $\alpha$ has order exactly $g=f$, so its centralizer is certainly of order larger than $3f$, whence $\beta^{\Aut(S)}$, and thus $\alpha^{\Aut(S)}$, cannot be a union of two $S$-cosets, so $|\C_{\Aut(S)}(\alpha)|>3f$. Hence if $|\C_{\Aut(S)}(\alpha)|\geq 2f|\C_{\PGL_3(2^f)}(\alpha)|$, then $|\C_{\Aut(S)}(\alpha)|\geq 6f$, so the density of $\alpha^{\Aut(S)}$ per $S$-coset is at most $\frac{1}{2}$, and if $|\C_{\Aut(S)}(\alpha)|=f\cdot|\C_{\PGL_3(2^f)}(\alpha)|$, then $|\C_{\Aut(S)}(\alpha)|=f\cdot m$ for some integer $m>3$, so $|\C_{\Aut(S)}(\alpha)|\geq 4f$ and $\alpha^{\Aut(S)}$ has density per $S$-coset at most $\frac{3}{4}$.
\item Subsubcase: $a\not=0$. If $c=0$, then by \cite[Theorem 2.5.12(b--e,g,i), p.~58]{GLS98a}, we have that the image of $\alpha$ in $\Out(S)/\langle\phi^2\rangle\cong\D_{12}$ is of order $2$, and so $|\alpha^{\Out(S)/\langle\phi^2\rangle}|=3$, whence $|\alpha^{\Out(S)}|\geq3$, contradicting our assumption that it is $2$. Therefore, $c=1$ and by Proposition \ref{hartleyProp}(2), we conclude that $|\C_{\Aut(S)}(\alpha)|\geq f\cdot\MCS(\PGU_3(2))=4f$, so the density of $\alpha^{\Aut(S)}$ per $S$-coset is at most $\frac{3}{4}$.
\end{itemize}
\end{enumerate}

\item Case: $S=B_2(p^f)=\POmega_5(p^f)$, so $|\Out(S)|=2f$ and $g\mid(1+\delta_{[p=2]})f$. We claim that then $\h(S)\leq\frac{3}{5}$. First, assume that $g\mid f$. By the final paragraph in Proposition \ref{hartleyProp}, we have $|\C_{\Aut(S)}(\alpha)|\geq g\cdot|\C_{\overline{G}_{\mu}}(\alpha^g)|$, and since $q(\mu)=p^{f/g}\in\IZ$, the fixed point subgroup $\overline{G}_{\mu}$ must be isomorphic to the inner diagonal automorphism group $\PGO_5(p^{f/g})$ of $\POmega_5(p^{f/g})$. It follows that $|\C_{\Aut(S)}(\alpha)|\geq g\cdot\MCS(\PGO_5(p^{f/g}))$, and so we are satisfied if either $\MCS(\PGO_5(p^{f/g}))\geq \frac{10}{3}\frac{f}{g}$ or the (by Proposition \ref{boundsProp}(2)) stronger inequality $p^{f/g}\geq \frac{10}{3}\frac{f}{g}$ holds. The latter is true for $p\geq 5$, for $p=3$ and $\frac{f}{g}\geq 2$, and for $p=2$ and $\frac{f}{g}\geq 4$, and the former can be verified to hold in the other cases with GAP \cite{GAP4} (by computing $\MCS(\GO_5(p^{f/g})/\zeta\GO_5(p^{f/g}))$; for $p=2$, one can also exploit the isomorphisms $B_2(2^t)\cong C_2(2^t)\cong\Sp_4(2^t)$). Now assume that $g\nmid f$. Then $p=2$, and $g=2g'$, where $g'\mid f$ and $\frac{f}{g'}$ is odd. By the final pararaph in Proposition \ref{hartleyProp}, we have $|\C_{\Aut(S)}(\alpha)|\geq g\cdot|\C_{\overline{G}_{\mu}}(\alpha^g)|$, and since $q(\mu)=p^{f/g}\notin\IZ$, the fixed point subgroup $\overline{G}_{\mu}$ must be isomorphic to (the inner diagonal automorphism group of) $\leftidx{^2}B_2(2^{f/g'})$. We are thus satisfied if either $\MCS(\leftidx{^2}B_2(2^{f/g'}))\geq\frac{5}{3}\frac{f}{g'}$ or the (by Proposition \ref{boundsProp}(2)) stronger inequality $2^{f/(2g')}\geq\frac{5}{3}\frac{f}{g'}$ holds. The latter is true for $\frac{f}{g'}\geq 7$, and the validity of the former for $\frac{f}{g'}\in\{1,3,5\}$ can be checked with GAP \cite{GAP4}.

\item Case: $S=G_2(p^f)$, so $|\Out(S)|=(1+\delta_{[p=3]})f$ and $g\mid|\Out(S)|$. We claim that then $\h(S)\leq\frac{2}{3}$. First, assume that $g\mid f$. Then if $p\not=3$, we have $|\Out(S)|=f$, and by the final paragraph in Proposition \ref{hartleyProp} and by Proposition \ref{boundsProp}(2), we get that $|\C_{\Aut(S)}(\alpha)|\geq g\cdot\MCS(G_2(p^{f/g}))\geq g\cdot p^{f/g}\geq 2f\geq\frac{3}{2}f$. If, on the other hand, $p=3$, then $|\Out(S)|=2f$, and by the same argument, we get that $|\C_{\Aut(S)}(\alpha)|\geq g\cdot 3^{f/g}\geq 3f$. Now assume $g\nmid f$. Then $p=3$, and $g=2g'$, where $g'\mid f$ and $\frac{f}{g'}$ is odd. By the final paragraph in Proposition \ref{hartleyProp} and by Proposition \ref{boundsProp}(2), we get that $|\C_{\Aut(S)}(\alpha)|\geq g\cdot\MCS(\leftidx{^2}G_2(3^{f/g'}))\geq g\cdot 3^{f/(2g')}$, which is greater than or equal to $3f$, since $3^{f/(2g')}\geq\frac{3}{2}\frac{f}{g'}$.

\item Case: $S=\leftidx{^2}A_2(p^{2f})=\PSU_3(p^f)$, so $|\Out(S)|=(3,p^f+1)\cdot 2f$ and $g\mid 2f$. We claim that then $\h(S)\leq\frac{6}{7}$. By Proposition \ref{hartleyProp}(3,4), we have the following:

\begin{itemize}
\item If $2\mid g$, then $|\C_{\Aut(S)}(\alpha)|\geq g\cdot\MCS(\PGL_3(p^{2f/g}))$.
\item If $2\nmid g$, then $|\C_{\Aut(S)}(\alpha)|\geq g\cdot\MCS(\PGU_3(p^{f/g}))$.
\end{itemize}

Let us first assume $p\geq 3$. By Proposition \ref{boundsProp}(2), we are done if $p^{f/g}\geq 7\frac{f}{g}$, which is true for $p\geq 7$, and for $p=5$ and $\frac{f}{g}\geq 2$. For $p=5$ and $\frac{f}{g}=1$, note that if $2\mid g=f$, then by the above, we are actually satisfied if $5^{2f/g}=25$ is at least $7\cdot 1=7$, which is true; if, on the other hand, $2\nmid g=f$, we are satisfied if $\MCS(\PGU_3(5))\geq 7$, which can be checked with GAP \cite{GAP4}. Finally, for $p=3$, we have $|\Out(S)|=2f$ only, so that we are satisfied with $3^{f/g}\geq\frac{7}{3}\frac{f}{g}$, which is true.

Let us now assume $p=2$. If $2\mid f$, then $|\Out(S)|=2f$ only, and so we are satisfied if for $t\in\{1,2\}$ and all $e\mid f$, $\MCS(\Inndiag(\leftidx{^t}A_2(2^{2f/(te)})))\geq\frac{7}{3}\frac{f}{e}$, which holds by Proposition \ref{boundsProp}(2) and the computed $\MCS$-values for $\PGU_3(2^d)$, $d\in\{1,2\}$, from Case 2 (\enquote{$S=A_2(p^f)$}). So we may henceforth assume $2\nmid f$, in which case $|\Out(S)|=6f$. We make a subcase distinction:

\begin{enumerate}
\item Subcase: $2\mid g$. Then since $2\nmid f$, the field component of $\alpha$ is an odd exponent power of any fixed generator $\phi$ of $\Phi_S$, and so by \cite[Theorem 2.5.12(c,g), p.~58]{GLS98a}, $\alpha$ is not out-central. Therefore, we are actually satisfied with $|\C_{\Aut(S)}(\alpha)|\geq\frac{7}{2}f$, which follows from $\MCS(\PGL_3(2^{2f/g}))\geq\frac{7}{2}\frac{f}{g}$, which holds true by Proposition \ref{boundsProp}(2).
\item Subcase: $2\nmid g$. Then $g\mid f$. We make a subsubcase distinction:

\begin{itemize}
\item Subsubcase: The outer diagonal component of the image of $\alpha$ in $\Out(S)$ is nontrivial. Then $\alpha$ is again not out-central, so we are satisfied with $|\C_{\Aut(S)}(\alpha)|\geq\frac{7}{2}f$, which is implied by $\MCS(\PGU_3(2^{f/g}))\geq\frac{7}{2}\frac{f}{g}$, and this holds by Proposition \ref{boundsProp} and by computing the values of $\MCS(\PGU_3(2^d))$, $d\in\{1,2,3\}$, with GAP \cite{GAP4}.
\item Subsubcase: The outer diagonal component of the image of $\alpha$ in $\Out(S)$ is trivial. Then $\alpha$ is out-central, and by Proposition \ref{hartleyProp}(3), the unique element $\beta$ of ${S\alpha}\cap\Phi_S$ satisfies $|\C_{\Aut(S)}(\beta)|>6f$, so that $|\C_{\Aut(S)}(\alpha)|>6f$ as well. Using that $|\C_{\Aut(S)}(\alpha)|$ is an integer multiple of $g\cdot|\PGU_3(2^f)(\alpha)|$, we can conclude with the following subsubsubcase distinction:

\begin{itemize}
\item Subsubsubcase: $|\C_{\Aut(S)}(\alpha)|\geq 2g|\PGU_3(2^f)(\alpha)|$. Then we are done if $\MCS(\PGU_3(2^{f/g}))\geq\frac{7}{2}\frac{f}{g}$, which is true (see the previous subsubcase).
\item Subsubsubcase: $|\C_{\Aut(S)}(\alpha)|=g|\PGU_3(2^f)(\alpha)|$. Then we are done if $\MCS(\PGU_3(2^{f/g}))\geq 7\frac{f}{g}$, which is true for $\frac{f}{g}\geq 2$ by Proposition \ref{boundsProp}(2), by computing $\MCS(\PGU_3(2^d))$, $d\in\{1,2,3\}$, with GAP \cite{GAP4}, and by observing that by \cite[Theorem 6.7]{FG12a}, $\MCS(\PGU_3(16))\geq93$ and $\MCS(\PGU_3(32))\geq389$. So assume $\frac{f}{g}=1$. Then we have $|\C_{\Aut(S)}(\alpha)|\geq f\cdot m$ for some integer $m>6$, and so $|\C_{\Aut(S)}(\alpha)|\geq7f$, as required.
\end{itemize}
\end{itemize}
\end{enumerate}

\item Case: $S=\leftidx{^2}B_2(2^{2f'+1})$, so $|\Out(S)|=2f'+1=2f$ and $g\mid 2f'+1$. We claim that then $\h(S)\leq\frac{3}{5}$. By the last paragraph in Proposition \ref{hartleyProp}, we know that $\C_{\leftidx{^2}B_2(2^{2f'+1})}(\alpha)=\C_{B_2(\overline{\IF_2})_{\mu}}(\alpha^g)$, where $\mu$ is a Lang-Steinberg map on $B_2(\overline{\IF_2})$ such that $q(\mu)=2^{f/g}\notin\IZ$. Therefore, $|\C_{\Aut(S)}(\alpha)|\geq g\cdot\MCS(\leftidx{^2}B_2(2^{(2f'+1)/g}))$, and so it is sufficient to have $\MCS(\leftidx{^2}B_2(2^{(2f'+1)/g}))\geq\frac{5}{3}\frac{2f'+1}{g}$, which certainly holds for $\frac{2f'+1}{g}=1$, and for $\frac{g}{2f'+1}\in\{3,5\}$, one checks with GAP \cite{GAP4} that it holds. Finally, for $\frac{2f'+1}{g}\geq 7$, it holds since by Proposition \ref{boundsProp}(2), we have $\MCS(\leftidx{^2}B_2(2^{(2f'+1)/g}))\geq \lceil2^{(2f'+1)/(2g)}\rceil\geq\frac{5}{3}\frac{2f'+1}{g}$.

\item Case: $S=\leftidx{^2}G_2(3^{2f'+1})$, so $|\Out(S)|=2f'+1=2f$ and $g\mid 2f'+1$. We claim that then $\h(S)\leq\frac{2}{5}$. As in the previous case, it is sufficient to have the bound $\MCS(\leftidx{^2}G_2(3^{(2f'+1)/g}))\geq\frac{5}{2}\frac{2f'+1}{g}$. This can be verified for $\frac{2f'+1}{g}=1$ with GAP \cite{GAP4}, and for $\frac{2f'+1}{g}=3$, it can be checked with the ATLAS of Finite Group Representations \cite{ATLAS}. Finally, for $\frac{2f'+1}{g}\geq5$, it holds by Proposition \ref{boundsProp}(2).

\item Case: $S=A_3(p^f)=\PSL_4(p^f)$, so $|\Out(S)|=(4,p^f-1)\cdot 2f$ and $g\mid f$. We claim that then $\h(S)\leq\frac{1}{2}$. First, assume $p=2$, where $|\Out(S)|=2f$ only. Using Proposition \ref{hartleyProp}(1,2), we are done if

\[
\min\{\MCS(\PGL_4(2^{f/g})),\MCS(\PGU_4(2^{f/g}))\}\geq4\frac{f}{g},
\]

which holds by Proposition \ref{boundsProp}(3) and by computing $\MCS(\PGL_4(2))=6$ and $\MCS(\PGU_4(2))=5$ with GAP \cite{GAP4}. Now assume $p\geq 3$. As $|\Out(S)|\leq 8f$, we are satisfied if $\min\{\MCS(\PGL_4(p^{f/g})),\MCS(\PGU_4(p^{f/g}))\}\geq16\frac{f}{g}$, and so by Proposition \ref{boundsProp}(3) in particular if $p^{f/g}\geq 8\frac{f}{g}$, which holds for $p\geq 11$, for $p\in\{5,7\}$ and $\frac{f}{g}\geq 2$, and for $p=3$ and $\frac{f}{g}\geq 3$. Each of the remaining cases can be checked either with GAP \cite{GAP4} or using \cite[Theorems 6.4 and 6.7]{FG12a}.

\item Case: $S=B_r(p^f)=\POmega_{2r+1}(p^f)$, $r\geq 3$, so $|\Out(S)|=(2,p^f-1)\cdot f$ and $g\mid f$. We claim that then $\h(S)\leq\frac{2}{5}$. First, assume $p=2$, where $|\Out(S)|=f$ only. By Propositions \ref{hartleyProp}(1) and \ref{boundsProp}(3), we are done if $2\cdot 2^{f/g}\geq\frac{5}{2}\frac{f}{g}$, which is true. Now assume $p\geq 3$. Since $|\Out(S)|=2f$, we are done by Propositions \ref{hartleyProp}(1) and \ref{boundsProp}(3) if $2\cdot p^{f/g}\geq 5\frac{f}{g}$, which also holds.

\item Case: $S=C_r(p^f)=\PSp_{2r}(p^f)$, $r\geq 3$, so $|\Out(S)|=(2,p^f-1)\cdot f$ and $g\mid f$. We claim that then $\h(S)\leq\frac{2}{5}$. Since $C_r(2^f)\cong B_r(2^f)$, we may assume that $p\geq 3$, which can be treated analogously to the subcase \enquote{$p\geq3$} of Case 9.

\item Case: $S=\leftidx{^2}A_3(p^{2f})=\PSU_4(p^f)$, so $|\Out(S)|=(4,p^f+1)\cdot 2f$ and $g\mid 2f$. We claim that then $\h(S)\leq\frac{1}{2}$. First, assume $p=2$. Then $|\Out(S)|=2f$ only, and so we are done if we can show that $|\C_{\Aut(S)}(\alpha)|\geq 4f$. Now if $2\nmid g$, then this is implied by $\MCS(\PGU_4(2^{f/g}))\geq 4\frac{f}{g}$, which we already checked to be true in Case 8. And if $2\mid g$, then it is implied by $\MCS(\PGL_4(2^{2f/g}))\geq 4\frac{f}{g}=2\frac{2f}{g}$, which also follows by the argument in Case 8. Now assume $p\geq 3$. As $|\Out(S)|\leq 8f$, we are satisfied if we can show that $|\C_{\Aut(S)}(\alpha)|\geq 16f$, and this can be reduced to Case 8 in a similar fashion as for $p=2$.

\item Case: $S=A_r(p^f)=\PSL_{r+1}(p^f)$, $r\geq 4$, so $|\Out(S)|=(r+1,p^f-1)\cdot 2f$ and $g\mid f$. We claim that then $\h(S)\leq\frac{10}{11}$, which we prove by showing the stronger assertion that $\min\{\MCS(\PGL_{r+1}(p^{f/g})),\MCS(\PGU_{r+1}(p^{f/g}))\}\geq\frac{11}{5}(r+1)\frac{f}{g}$, in the following two steps:

\begin{itemize}
\item First, one checks that it holds for $r\in\{4,\ldots,9\}$ using a combination of applications of Propositions \ref{hartleyProp}(1,2) and \cite[Theorems 6.4 and 6.7]{FG12a}, which cover all cases except $p=2$ and $\frac{f}{g}=1$, and GAP \cite{GAP4} computations of the exact values of $\MCS(\PGL_{r+1}(2))$ and $\MCS(\PGU_{r+1}(2))$ (the latter $\MCS$-value only needs to be computed for $r\in\{4,5,6\}$, since for $r\geq 7$, Fulman-Guralnick's bounds cover it as well) to deal with that remaining case too.
\item For $r\geq 10$, Fulman-Guralnick's bounds become strong enough to cover all cases, even $p=2$ and $\frac{f}{g}=1$, and one can conclude as follows: Let us first show that $\MCS(\PGL_{r+1}(p^{f/g}))\geq\frac{11}{5}(r+1)\frac{f}{g}$. By \cite[Theorem 6.4]{FG12a}, this $\MCS$-value is bounded from below by

\[
\frac{p^{(r+1)f/g}}{p^{f/g}-1}\cdot\frac{1-1/p^{f/g}}{\e(1+\log_{p^{f/g}}(r+2))}\geq p^{rf/g}\cdot\frac{1}{2\e(1+\log_2(r+2))},
\]

and this is greater than or equal to $\frac{11}{5}(r+1)\frac{f}{g}$ if and only if

\begin{equation}\label{eq3}
p^{r\cdot f/g}\geq\frac{22}{5}(r+1)\e(1+\log_2(r+2))\cdot\frac{f}{g}.
\end{equation}

By an inductive argument, for each prime $p$, Formula (\ref{eq3}) holds for all values of $\frac{f}{g}$ if and only if it holds for $\frac{f}{g}=1$. Moreover, Formula (\ref{eq3}) with $\frac{f}{g}=1$ holds for all primes $p$ if and only if it holds for $p=2$. We are thus left with verifying that for $r\geq 10$,

\begin{equation}\label{eq4}
2^r\geq\frac{22}{5}(r+1)\e(1+\log_2(r+2)),
\end{equation}

which can be shown by induction on $r$: It is readily verified for $r=10$, and upon replacing $r$ by $r+1$, the left-hand side in Formula (\ref{eq4}) grows by a factor of $2$, whereas the right-hand side only grows by a factor of

\begin{align*}
\frac{r+2}{r+1}\cdot\frac{1+\log_2(r+3)}{1+\log_2(r+2)} &\leq\frac{12}{11}\cdot(1+\frac{1}{1+\log_2(r+2)}) \\ &\leq\frac{12}{11}\cdot(1+\frac{1}{1+\log_2(12)})=1.3288\ldots<2.
\end{align*}

The argument for showing that $\MCS(\PGU_{r+1}(p^{f/g}))\geq\frac{11}{5}(r+1)f$ is analogous.
\end{itemize}

\item Case: $S=D_4(p^f)=\POmega_8^+(p^f)$, so $|\Out(S)|=(2,p-1)^2\cdot 6f$ and $g\mid f$. We claim that then $\h(S)\leq\frac{12}{13}$. Let us first assume $p=2$, so that $|\Out(S)|=6f$ and we are done if we can show that $|\C_{\Aut(S)}(\alpha)|\geq\frac{13}{2}f$. By \cite[Theorem 2.5.12(b,c,e), p.~58]{GLS98a}, $\Out(S)=\langle\phi\rangle\times\Gamma_S\cong\IZ/f\IZ\times\Sym_3$, where $\phi$ is a generator of $\Phi_S$. We can thus write $\alpha=s\phi^b\tau$ with $s\in S$, $b\in\{0,\ldots,f-1\}$ and $\tau\in\Sym_3$ a graph automorphism of $S$. We make a subcase distinction:

\begin{enumerate}
\item Subcase: $\tau=1$. Then by Proposition \ref{hartleyProp}(1),

\[
|\C_{\Aut(S)}(\alpha)|\geq g\cdot\MCS(\POmega_8^+(2^{f/g})),
\]

and so we are done if we can show that $\MCS(\POmega_8^+(2^{f/g}))\geq\frac{13}{2}\frac{f}{g}$. But by \cite[Theorem 6.13(2)]{FG12a}, and using that $\GO_8^+(2^{f/g})=\PGO_8^+(2^{f/g})$ is an index $2$ extension of $S=\POmega_8^+(p^f)$, we get that

\begin{align*}
\MCS(\POmega_8^+(2^{f/g})) &\geq 2^{3f/g}\cdot\sqrt{\frac{1-1/2^{f/g}}{2\e(4+\log_{2^{f/g}}(16))}} \\
&\geq 8^{f/g}\cdot\sqrt{\frac{1}{4\e(4+\log_2(16))}}=8^{f/g}\cdot\sqrt{\frac{1}{32\e}},
\end{align*}

and this is greater than or equal to $\frac{13}{2}\frac{f}{g}$ if and only if

\[
8^{f/g}\geq\frac{13}{2}\sqrt{32\e}\frac{f}{g}=60.6227\ldots\frac{f}{g},
\]

which holds for $\frac{f}{g}\geq 3$. For $\frac{f}{g}=1$, one checks with GAP \cite{GAP4} that $\MCS(\POmega_8^+(2))=7$. Finally, for $\frac{f}{g}=2$, we argue as follows: $\alpha$ is out-central, but its $S$-coset contains an element $\beta$ (e.g., $\beta=\phi^b$) such that $|\C_{\Aut(S)}(\beta)|>6f$, and so $|\C_{\Aut(S)}(\alpha)|>6f$ as well. Since $|\C_{\Aut(S)}(\alpha)|$ is also an integer multiple of $g\cdot|\C_{\POmega_8^+(2^f)}(\alpha)|=\frac{f}{2}|\C_{\POmega_8^+(2^f)}(\alpha)|$, we conclude this first subcase with the following subsubcase distinction:

\begin{enumerate}
\item $|\C_{\Aut(S)}(\alpha)|\geq f|\C_{\POmega_8^+(2^f)}(\alpha)|$. Then $\MCS(\POmega_8^+(4))\geq\frac{13}{2}$, which follows from \cite[Theorem 6.13(2)]{FG12a}, is sufficient.
\item $|\C_{\Aut(S)}(\alpha)|=\frac{f}{2}|\C_{\POmega_8^+(2^f)}(\alpha)|$. Then $|\C_{\POmega_8^+(2^f)}(\alpha)|>12$, and so indeed, $|\C_{\Aut(S)}(\alpha)|\geq\frac{13}{2}f$, as required.
\end{enumerate}

\item Subcase: $\ord(\tau)=2$. Then $|\alpha^{\Out(S)}|=3$, so $|\C_{\Aut(S)}(\alpha)|\geq\frac{13}{6}f$ is sufficient here. By Proposition \ref{hartleyProp}(1), if $2\nmid g$, then this is implied by $\MCS(\POmega_8^+(2^{f/g}))\geq\frac{13}{6}\frac{f}{g}$, and this is clear by the computations in the previous subcase. So we may assume $2\mid g$ now, for which we are done by Proposition \ref{hartleyProp}(2) if $\MCS(\POmega_8^-(2^{f/g}))\geq\frac{13}{6}\frac{f}{g}$, which can also be checked by \cite[Theorem 6.13(2)]{FG12a} and computing (with GAP \cite{GAP4}) $\MCS(\POmega_8^-(2))=9$.

\item Subcase: $\ord(\tau)=3$. Then $|\alpha^{\Out(S)}|=2$, so $|\C_{\Aut(S)}(\alpha)|\geq\frac{13}{4}f$ is sufficient here. If $3\nmid g$, then by Proposition \ref{hartleyProp}(1), this is implied by $\MCS(\POmega_8^+(2^{f/g}))\geq\frac{13}{4}\frac{f}{g}$, which is clear by the argument in the first subcase. So we may assume $3\mid g$ now, for which we are done by Proposition \ref{hartleyProp}(2) if $\MCS(\leftidx{^3}D_4(2^{3f/g}))\geq\frac{13}{4}\frac{f}{g}$. For $\frac{f}{g}\geq 2$, this holds by Proposition \ref{boundsProp}(3), and so it remains to show that $\MCS(\leftidx{^3}D_4(8))\geq 4$. But the only elements of $\leftidx{^3}D_4(8)$ whose centralizer could possibly be of order smaller than $4$ are the involutions and the elements of order $3$, and the former lie in a Sylow $2$-subgroup of $\leftidx{^3}D_4(8)$, of order $2^{12}>2$, so their centralizer order is divisible by $2^2=4$, whereas the latter lie in a Sylow $3$-subgroup of $\leftidx{^3}D_4(8)$, of order $3^4>3$, so their centralizer order is divisible by $3^2=9$.
\end{enumerate}

This concludes the argument for $p=2$, so we may assume $p\geq 3$ henceforth, so that $|\Out(S)|=24f$, and more specifically, $\Out(S)=(\Outdiag(S)\times\Phi_S)\rtimes\Gamma_S\cong((\IZ/2\IZ)^2\times\IZ/f\IZ)\rtimes\Sym_3$, and the conjugation action of $\Gamma_S\cong\Sym_3$ on $\Phi_S\cong\IZ/f\IZ$ is trivial, whereas it is faithful on $\Outdiag(S)\cong(\IZ/2\IZ)^2$. In particular, $\Phi_S\unlhd\Out(S)$ and $\Out(S)/\Phi_S\cong\Sym_4$. We make a subcase distinction:

\begin{enumerate}
\item Subcase: The graph component of $\alpha$ has order at most $2$. Then by Proposition \ref{hartleyProp}(1,2), we are done if we can show that $\MCS(\PGO_8^{\pm}(p^{f/g}))\geq 26\frac{f}{g}$. Again, we use \cite[Theorem 6.13(2)]{FG12a} (and that $|\zeta\GO_8^{\pm}(p^{f/g})|=2$) to get that $\MCS(\PGO_8^{\pm}(p^{f/g}))\geq p^{3f/g}\cdot\sqrt{\frac{1-1/p^{f/g}}{2\e(4+\log_{p^{f/g}}(16)}}\geq p^{3f/g}\cdot\sqrt{\frac{1}{3\e(4+\log_3(16))}}$, which is at least $26\frac{f}{g}$ if and only if $p^{3f/g}\geq 26\sqrt{3\e(4+\log_3(16))}\frac{f}{g}=189.6395\ldots\frac{f}{g}$. This holds true unless $p\in\{3,5\}$ and $\frac{f}{g}=1$. Moreover, one can check with GAP \cite{GAP4} that $\MCS(\PGO_8^+(3))=\MCS(\PGO_8^-(3))=40\geq 26$; for this, the author defined $\PGO_8^{\pm}(3)$ in GAP as the quotient of $\GO_8^{\pm}(3)$ by its centre and computed the character table of this group, which settles $p=3$. For $p=5$, it seems the analogous computational approach to $p=3$ was too much for the author's computer to handle, but one can work around this as follows: Let $s\in\PGO_8^{\pm}(5)$. If $s$ is semisimple, then by the arguments leading to Proposition \ref{boundsProp}, $|\C_{\PGO_8^{\pm}(5)}(s)|\geq (5-1)^4=256\geq 26$, and if $5\mid\ord(s)$, then we can argue that $|\C_{\PGO_8^{\pm}(5)}(s)|\geq 30\geq 26$, as follows: We have that $5\mid|\C_{\PGO_8^{\pm}(5)}(s)|$, and Fulman-Guralnick's bounds \cite[Theorem 6.13(2)]{FG12a} give $|\C_{\PGO_8^{\pm}(5)}(s)|\geq 21$. It therefore suffices to argue that $|\C_{\PGO_8^{\pm}(5)}(s)|\not=25$. Assume otherwise. Then $s$ lies in some Sylow $5$-subgroup of $\PGO_8^{\pm}(5)$, which is isomorphic to a Sylow $5$-subgroup of $\POmega_8^{\pm}(5)$. But with GAP \cite{GAP4}, one can check that the minimum centralizer size in such a Sylow $5$-subgroup is $625$ (for this, the author computed the character table of the Sylow $5$-subgroup after transforming it into a pc group). Hence $|\C_{\PGO_8^{\pm}(5)}(s)|\geq625$, a contradiction.
\item Subcase: The graph component of $\alpha$ has order $3$. Then the image of $\alpha$ in $\Out(S)/\Phi_S\cong\Sym_4$ has order $3$, and so $|\alpha^{\Out(S)}|\geq8$. We are therefore satisfied if we can show that $|\C_{\Aut(S)}(\alpha)|\geq\frac{13}{4}f$. If $3\nmid g$, then by Proposition \ref{hartleyProp}(1), it suffices to have $\MCS(\PGO_8^+(p^{f/g}))\geq\frac{13}{4}\frac{f}{g}$, which holds true by the arguments in the previous subcase. So assume $3\mid g$. Then by Proposition \ref{hartleyProp}(2), it suffices to have $\MCS(\leftidx{^3}D_4(p^{3f/g}))\geq\frac{13}{4}\frac{f}{g}$, which holds true by Proposition \ref{boundsProp}(3).
\end{enumerate}

\item Case: $S=F_4(p^f)$, so $|\Out(S)|=(2,p)f$ and $g\mid|\Out(S)|$. We claim that then $\h(S)\leq\frac{1}{2}$. First, assume that $g\mid f$. The claim is clear if we can show that $|\C_{\Aut(S)}(\alpha)|\geq 4f$, which by the final paragraph in Proposition \ref{hartleyProp} holds if $\MCS(F_4(p^{f/g}))\geq4\frac{f}{g}$, which is clear by Proposition \ref{boundsProp}(3) unless $p=2$ and $\frac{f}{g}=1$, for which we need to show that $\MCS(F_4(2))\geq 4$. Let $s\in F_4(2)$. Unless $s$ is of order $2$ or $3$, it is clear that $|\C_{F_4(2)}(s)|\geq 4$, and for elements of one of these two orders, it follows from the fact that $\min\{\nu_2(|F_4(2)|),\nu_3(|F_4(2)|)\}>1$. Now assume that $g\nmid f$. Then $p=2$, and $g=2g'$, where $g'\mid f$ and $\frac{f}{g'}$ is odd. By the final paragraph in Proposition \ref{hartleyProp}, we are done if we can show that either $\MCS(\leftidx{^2}F_4(2^{f/g'}))\geq2\frac{f}{g'}$ or the by Proposition \ref{boundsProp}(3) stronger inequality $2^{f/(2g')}\geq\frac{f}{g'}$ holds. The former is clearly true for $\frac{f}{g'}=1$, and the latter is true for $\frac{f}{g'}\geq 5$, whereas for $\frac{f}{g'}=3$, Proposition \ref{boundsProp}(3) yields $\MCS(\leftidx{^2}F_4(8))\geq2\sqrt{8}=5.6568\ldots$, and thus $\MCS(\leftidx{^2}F_4(8))\geq6$, as required.

\item Case: $S=\leftidx{^2}A_r(p^{2f})=\PSU_{r+1}(p^f)$, $r\geq 4$, so $|\Out(S)|=(r+1,p^f+1)\cdot 2f$ and $g\mid 2f$. We claim that then $\h(S)\leq\frac{10}{11}$, which is clear if $|\C_{\Aut(S)}|\geq\frac{11}{5}(r+1)f$. First, assume that $2\nmid g$. Then by Proposition \ref{hartleyProp}(3), it is sufficient to have $\MCS(\PGU_{r+1}(p^{f/g}))\geq\frac{11}{5}(r+1)\frac{f}{g}$, which was already checked to be true in Case 12. Now assume $2\mid g$. Then by Proposition \ref{hartleyProp}(4), it is sufficient to have $\MCS(\PGL_{r+1}(p^{2f/g}))\geq\frac{11}{5}(r+1)\frac{f}{g}=\frac{11}{10}(r+1)\frac{2f}{g}$, which also holds by Case 12.

\item Case: $S=\leftidx{^2}D_4(p^{2f})=\POmega_8^-(p^f)$, so $|\Out(S)|=(4,p^{4f}+1)\cdot 2f$ and $g\mid 2f$. We claim that then $\h(S)\leq\frac{12}{13}$. For $p=2$, where $|\Out(S)|=2f$ only, this can be verified using Proposition \ref{hartleyProp}(3,4) and the fact, already verified in Case 13, that $\MCS(\POmega_8^{\pm}(2^{f/g}))\geq\frac{13}{6}\frac{f}{g}$. The argument for $p\geq3$ is analogous, using the already established bound $\MCS(\POmega_8^{\pm}(p^{f/g}))\geq 26\frac{f}{g}$.

\item Case: $S=\leftidx{^3}D_4(p^{3f})$, so $|\Out(S)|=3f$ and $g\mid 3f$. We claim that then $\h(S)\leq\frac{12}{13}$, which can be verified using Proposition \ref{hartleyProp}(3,4) and the bounds $\MCS(\POmega_8^+(p^{3f/g}))\geq\frac{13}{6}\frac{3f}{g}=\frac{13}{2}\frac{f}{g}$ and $\MCS(\leftidx{^3}D_4(p^{3f/g}))\geq\frac{13}{4}\frac{f}{g}$, which were already established in Case 13.

\item Case: $S=\leftidx{^2}F_4(2^{2f'+1})$, so $|\Out(S)|=2f'+1=2f$ and $g\mid 2f'+1$. We claim that then $\h(S)\leq\frac{2}{5}$. As in Case 6, this holds by the last paragraph of Proposition \ref{hartleyProp} if $\MCS(\leftidx{^2}F_4(2^{(2f'+1)/g}))\geq\frac{5}{2}\cdot\frac{2f'+1}{g}$. For $\frac{2f'+1}{g}=1$, this clearly holds (noting that $\nu_2(|\leftidx{^2}F_4(2)|)=12>1$), and for $\frac{2f'+1}{g}\geq 3$, it holds by Proposition \ref{boundsProp}(3).

\item Case: $S=D_r(p^f)=\POmega_{2r}^+(p^f)$, $r\geq 5$, so $|\Out(S)|=(2,p-1)^2\cdot 2f$ and $g\mid f$. We claim that then $\h(S)\leq\frac{3}{4}$. First, assume $p=2$, so $|\Out(S)|=2f$ only. By Proposition \ref{hartleyProp}(1,2), it suffices to have $\MCS(\POmega_{2r}^{\pm}(2^{f/g}))\geq\frac{8}{3}\frac{f}{g}$, which holds by Proposition \ref{boundsProp}(3). Now assume $p\geq 3$. Then $|\Out(S)|=8f$, and so by Proposition \ref{hartleyProp}(1,2), we are satisfied if $\MCS(\PGO_{2r}^{\pm}(p^{f/g}))\geq\frac{32}{3}\frac{f}{g}$. Now by \cite[Theorem 6.13(2)]{FG12a}, we have

\begin{align*}
\MCS(\PGO_{2r}^{\pm}(p^{f/g})) &\geq p^{(r-1)f/g}\cdot\sqrt{\frac{1-1/p^{f/g}}{2\e(4+\log_{p^{f/g}}(4r))}} \\
&\geq p^{(r-1)f/g}\cdot\sqrt{\frac{1}{3\e(4+\log_3(4r))}},
\end{align*}

and this is greater than or equal to $\frac{32}{3}\frac{f}{g}$ if and only if

\begin{equation}\label{eq5}
p^{(r-1)f/g}\geq\frac{32}{3}\sqrt{3\e(4+\log_3(4r))}\frac{f}{g}.
\end{equation}

By an inductive argument, for each prime $p\geq 3$, Formula (\ref{eq5}) holds for all values of $\frac{f}{g}\geq 1$ if and only if it holds for $\frac{f}{g}=1$. Moreover, Formula (\ref{eq5}) with $\frac{f}{g}=1$ holds for all primes $p\geq 3$ if and only if it holds for $p=3$, so we are left with checking that for $r\geq 5$,

\begin{equation}\label{eq6}
3^{r-1}\geq\frac{32}{3}\sqrt{3\e(4+\log_3(4r))}.
\end{equation}

We verify this by induction on $r$: One readily checks that Formula (\ref{eq6}) holds for $r=5$, and upon replacing $r$ by $r+1$, the left-hand side of Formula (\ref{eq6}) grows by a factor of $3$, whereas the right-hand side only grows by a factor of $\sqrt{\frac{4+\log_3(4r+4)}{4+\log_3(4r)}}\leq 1+\frac{1}{4+\log_3(4r)}\leq 1+\frac{1}{4+\log_3(20)}=1.1486\ldots<3$.

\item Case: $S=\leftidx{^2}D_r(p^{2f})=\POmega_{2r}^-(p^f)$, $r\geq5$, so $|\Out(S)|=(4,p^{rf}+1)\cdot 2f$ and $g\mid 2f$. We claim that then $\h(S)\leq\frac{3}{4}$, and this follows from Proposition \ref{hartleyProp}(3,4) and the bounds $\MCS(\POmega_{2r}^{\pm}(2^{f/g}))\geq\frac{8}{3}\frac{f}{g}$ and $\MCS(\PGO_{2r}^{\pm}(p^{f/g}))\geq\frac{32}{3}\frac{f}{g}$ for $p\geq 3$ established in Case 19.

\item Case: $S=E_6(p^f)$, so $|\Out(S)|=(3,p^f-1)\cdot 2f$ and $g\mid f$. We claim that then $\h(S)\leq\frac{18}{19}$. We make a subcase distinction:

\begin{enumerate}
\item Subcase: $\alpha$ is not out-central. Then we are satisfied if $|\C_{\Aut(S)}(\alpha)|\geq\frac{19}{6}f$. By Proposition \ref{hartleyProp}(1,2), this holds if

\[
\min\{\MCS(\Inndiag(E_6(p^{f/g}))),\MCS(\Inndiag(\leftidx{^2}E_6(p^{2f/g})))\}\geq\frac{19}{6}\frac{f}{g},
\]

which holds by Proposition \ref{boundsProp}(3).
\item Subcase: $\alpha$ is out-central. Note that if $\gcd(3,p^f-1)=1$, then $|\Out(S)|=2f$ only, so that it is sufficient to have $|\C_{\Aut(S)}(\alpha)|\geq\frac{19}{9}f$, which can be proved as in the previous subcase. We may thus henceforth assume $\gcd(3,p^f-1)=3$, so that $|\Out(S)|=6f$. Since $\alpha$ is out-central, by the structure of $\Out(S)$ \cite[Theorem 2.5.12(b,c,g), p.~58]{GLS98a}, the image of $\alpha$ in $\Out(S)$ has trivial outer diagonal component, and so the $S$-coset of $\alpha$ in $\Aut(S)$ contains an element $\beta\in\Phi_S\Gamma_S$. By \cite[Theorem 2.5.12(e)]{GLS98a}, $\beta$ commutes with all the $2f$ elements of $\Phi_S\Gamma_S$. These elements form a transversal for $\Inndiag(S)$ in $\Aut(S)$, and so by Proposition \ref{hartleyProp}(1,2), we have

\begin{align*}
&|\C_{\Aut(S)}(\beta)|\geq 2f\cdot|\C_{\Inndiag(S)}(\beta^g)| \\
&\geq 2f\cdot\min\{\MCS(\Inndiag(E_6(p^{f/g}))),\MCS(\Inndiag(\leftidx{^2}E_6(p^{2f/g})))\} \\
&\geq 2f\cdot\frac{19}{6}\frac{f}{g}>6f.
\end{align*}

Hence $|\C_{\Aut(S)}(\alpha)|>6f$ as well. Since $|\C_{\Aut(S)}(\alpha)|$ is also an integer multiple of $g\cdot|\C_{\Inndiag(S)}(\alpha)|$, we can conclude with the following subsubcase distinction:

\begin{itemize}
\item Subsubcase: $|\C_{\Aut(S)}(\alpha)|\geq 2g\cdot|\C_{\Inndiag(S)}(\alpha)|$. Then by the computations in the previous subcase, $|\C_{\Aut(S)}(\alpha)|\geq 2g\cdot\frac{19}{6}\frac{f}{g}=\frac{19}{3}f=\frac{19}{18}|\Out(S)|$.
\item Subsubcase: $|\C_{\Aut(S)}(\alpha)|=g\cdot|\C_{\Inndiag(S)}(\alpha)|$. First, assume $\frac{f}{g}\geq 4$. Then by Propositions \ref{hartleyProp}(1,2) and \ref{boundsProp}(3), we have $|\C_{\Aut(S)}(\alpha)|\geq g\cdot 2p^{f/g}\geq g\cdot 2\cdot 4\frac{f}{g}=8f\geq\frac{19}{3}f=\frac{19}{18}|\Out(S)|$. Now assume $\frac{f}{g}\leq3$, say $g=\frac{f}{t}$ with $t\in\{1,2,3\}$. Then $|\C_{\Aut(S)}(\alpha)|=\frac{f}{t}\cdot m$, where $m$ is an integer such that $\frac{m}{t}>6$. It follows that $|\C_{\Aut(S)}(\alpha)|\geq\frac{19}{3}f=\frac{19}{18}|\Out(S)|$.
\end{itemize}
\end{enumerate}

\item Case: $S=\leftidx{^2}E_6(p^{2f})$, so $|\Out(S)|=(3,p^f+1)\cdot 2f$ and $g\mid 2f$. We claim that then $\h(S)\leq\frac{18}{19}$. We make a subcase distinction:

\begin{enumerate}
\item Subcase: $\alpha$ is not out-central. Then we are satisfied if $|\C_{\Aut(S)}(\alpha)|\geq\frac{19}{6}f$. If $2\nmid g$, then by Proposition \ref{hartleyProp}(3), we are done if

\[
\MCS(\Inndiag(\leftidx{^2}E_6(p^{2f/g}))\geq\frac{19}{6}\frac{f}{g},
\]

which was already verified in Subcase (a) of Case 21. Similarly, if $2\mid g$, then by Proposition \ref{hartleyProp}(4), we are done if

\[
\MCS(\Inndiag(E_6(p^{2f/g})))\geq\frac{19}{6}\frac{f}{g}=\frac{19}{12}\frac{2f}{g},
\]

and this also holds by Subcase (a) of Case 21.
\item Subcase: $\alpha$ is out-central. As in Subcase (b) of Case 21, we may assume that $\gcd(3,p^f+1)=3$, so that $|\Out(S)|=6f$. Since $\alpha$ is out-central, the outer diagonal component of the image of $\alpha$ in $\Out(S)$ must be trivial, so that the coset $S\alpha$ contains a unique element $\beta\in\Phi_S$. Now $\beta$ commutes with all the $2f$ elements of $\Phi_S$, which form a transversal for $\Inndiag(S)$ in $\Aut(S)$. Hence by Proposition \ref{hartleyProp}(3,4) and the computations in Case 21, we get that $|\C_{\Aut(S)}(\beta)|\geq 2f\cdot|\C_{\Inndiag(S)}(\beta)|\geq 2f\cdot 4>6f=|\Out(S)|$. Hence $|\C_{\Aut(S)}(\alpha)|>6f$ as well, and we can conclude with a subsubcase distinction as at the end of Subcase (b) of Case 21.
\end{enumerate}

\item Case: $S=E_7(p^f)$, so $|\Out(S)|=(2,p^f-1)\cdot f$ and $g\mid f$. We claim that then $\h(S)\leq\frac{13}{32}$. Indeed, using Proposition \ref{hartleyProp}(1) and \cite[Lemma 6.14]{FG12a}, we get that $|\C_{\Aut(S)}(\alpha)|\geq g\cdot\MCS(E_7(p^{f/g}))\geq g\cdot\frac{p^{7f/g}}{26}\geq g\cdot\frac{128f/g}{26}=\frac{64}{13}f\geq\frac{32}{13}|\Out(S)|$.

\item Case: $S=E_8(p^f)$, so $|\Out(S)|=f$ and $g\mid f$. We claim that then $\h(S)\leq\frac{13}{128}$. Indeed, using Proposition \ref{hartleyProp}(1) and \cite[Lemma 6.14]{FG12a}, we get that $|\C_{\Aut(S)}(\alpha)|\geq g\cdot\MCS(E_8(p^{f/g}))\geq g\cdot\frac{p^{8f/g}}{26}\geq g\cdot\frac{256f/g}{26}=\frac{128}{13}f=\frac{128}{13}|\Out(S)|$.
\end{enumerate}
\end{proof}

\section{Proof of Theorem \ref{mainTheo}}\label{sec3}

\subsection{Proof of Theorem \ref{mainTheo}(1)}\label{subsec3P1}

We show the contraposition: If $G$ is a finite nonsolvable group, then $\maol(G)\leq\frac{18}{19}$. Indeed, let $S$ be a nonabelian composition factor of $G$. By Lemma \ref{mainLem1}(2), $G$ has a semisimple characteristic quotient $H$ such that $\Soc(H)\cong S^n$ for some $n\in\IN^+$. By Lemmas \ref{mainLem2}(2) and \ref{mainLem3}(2), $\maol(H)\leq\h(S)$, and so, by Lemmas \ref{mainLem4}(1) and \ref{mainLem1}(1), we conclude that $\maol(G)\leq\maol(H)\leq\h(S)\leq\frac{18}{19}$.\qed

\subsection{Proof of Theorem \ref{mainTheo}(2)}\label{subsec3P2}

Fix $\rho\in\left(0,1\right]$, let $G$ be a finite group with $\maol(G)\geq\rho$, and let $S$ be a nonabelian composition factor of $G$. We need to show that $|S|$ is bounded in terms of $\rho$. Just as in the proof of Theorem \ref{mainTheo}(1), we find that $G$ has a semisimple characteristic quotient $H$ such that $\Soc(H)\cong S^n$ for some $n\in\IN^+$, so that $\rho\leq\maol(G)\leq\maol(H)\leq\h(S)$. By Lemma \ref{mainLem4}(2), this excludes alternating groups of large degree as well as simple Lie type groups $\leftidx{^t}X_r(p^{ft})$ where either the untwisted Lie rank $r$ or the defining characteristic $p$ is large as possibilities for $S$. We are therefore done if we can show the following:

\begin{propposition}\label{fProp}
Let $\leftidx{^t}X_r$ resp.~$p$ be a fixed Lie symbol resp.~prime. Then as $f\to\infty$, $\sup_{H\text{ fin.~semisimple gp.~with }\Soc(H)\text{ a power of }\leftidx{^t}X_r(p^{ft})}{\maol(H)}\to0$.
\end{propposition}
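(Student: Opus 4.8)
Write $S=\leftidx{^t}X_r(p^{ft})$ and let $q=q(\sigma)$ be its defining field size; throughout, $c=c(\leftidx{^t}X_r)$ denotes a constant (possibly changing from line to line). We use that $|\Out(S)|\le cf$ and, from \cite[Theorem~2.5.12]{GLS98a}, that every $\Out(S)$-conjugacy class $\tau$ has a well-defined \emph{field order} $g(\tau)$ — the common order of the field (resp.\ graph-field) component of its members — with only $c$-many classes $\tau$ of each field order. By \cite{Ros75a}, $\Aut(H)$ acts on $H$ as $\N_{\Aut(S^n)}(H)$ by conjugation, so
\[
\maol(H)=\frac{[\N_{\Aut(S^n)}(H):H]}{\min_{\vec\alpha\in H}|\C_{\N_{\Aut(S^n)}(H)}(\vec\alpha)|},
\]
and the plan is to combine this identity with the bound $\maol(H)\le\prod_l\prod_\tau r(M_l^\tau(\vec\alpha))$ of Lemma~\ref{mainLem2}(2), applied in complementary parameter ranges.

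The first step is an estimate on the numbers $\rho(c)$ of Definition~\ref{typeDef}(3). If $c$ is an $\Aut(S)$-class whose field component has order $g$, then by Proposition~\ref{hartleyProp} the centralizer $\C_{\Aut(S)}(c)$ is identified with one of order at least $g\cdot\MCS(\Inndiag(X_r(p^{f/g})))\ge g(p^{f/g}-1)$ (Proposition~\ref{boundsProp}), while $|\type_S(c)|=[\Out(S):\C_{\Out(S)}(\bar c)]\le|\Out(S)|/g$ since $\C_{\Out(S)}(\bar c)\supseteq\langle\bar c\rangle$ has order divisible by $g$. Hence, since $\rho(c)=|c|\,/\,(|S|\,|\type_S(c)|)=|\C_{\Out(S)}(\bar c)|\,/\,|\C_{\Aut(S)}(c)|$,
\[
\rho(c)\le\frac{cf}{g(p^{f/g}-1)}=\frac{cv}{p^v-1},\qquad v:=f/g\ge1,
\]
so $\rho(c)\le\h(S)\le\frac{18}{19}$ always (Lemma~\ref{mainLem4}(1)), and $\rho(c)\to0$, uniformly over all $c$ of field order $\le f/K$, as $K\to\infty$. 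Thus the only $\rho(c)$ that fail to decay belong to automorphisms with a \emph{full-order} field component; for those, $\type_S(c)$ lies among boundedly many $\Out(S)$-classes, and, by Proposition~\ref{hartleyProp} again, the number $k(\tau)$ of $\Aut(S)$-classes of such a type $\tau$ is a number of conjugacy classes in a fixed group of Lie type over $\IF_p$, hence bounded independently of $f$.

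It is worth seeing $n=1$ first, as it already reveals the mechanism. Here $\bar\alpha\in\bar H\le\Out(S)$; let $g^*$ be the largest field order occurring among elements of $H$, so $|\bar H|\ge g^*$ and $[\N_{\Aut(S)}(H):H]\le|\Out(S)|/g^*\le cf/g^*$. For each $\alpha\in H$, the subgroup $\langle\alpha\rangle\C_S(\alpha)$ of $\C_{\N_{\Aut(S)}(H)}(\alpha)$ has order at least $\ord(\bar\alpha)\cdot|\C_S(\alpha)|\ge g(\alpha)(p^{f/g(\alpha)}-1)$ up to a bounded factor (the cases where $\alpha$ has no field component being more favourable still, with $|\C_S(\alpha)|\ge(q-1)/c$), and since $g\mapsto g(p^{f/g}-1)$ is decreasing and $g(\alpha)\le g^*$, this is $\ge g^*(p^{f/g^*}-1)/c$. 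Therefore
\[
\maol(H)\le\frac{cf}{(g^*)^2(p^{f/g^*}-1)}\le\frac{c}{f}\sup_{u\ge1}\frac{u^2}{p^u-1}\in\o(1),
\]
uniformly in $H$: a large field order forces the index down while simultaneously raising every centralizer.

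For general $n$ I would run, on the maximizing $\vec\alpha=(\alpha_1,\dots,\alpha_n)\sigma\in H$, a case analysis balancing Lemma~\ref{mainLem2}(2) against the $n=1$ argument. (i) If the type of some $\bcpc_\zeta(\vec\alpha)$ has field order $\le f/K$, then by the first step and Lemma~\ref{mainLem3}(2) the corresponding factor $r(M_l^\tau(\vec\alpha))$, hence the whole product, is $\le cK/(p^K-1)$; choosing $K=K(\epsilon)$ and then $f$ large settles this. (ii) Otherwise every $\bcpc_\zeta(\vec\alpha)$ has type of field order $>f/K(\epsilon)$, so the types $\tau$ that occur lie among boundedly many with $k(\tau)$ bounded independently of $f$; one then invokes a uniform concentration estimate for multinomial mass functions — provable from Robbins's Stirling bounds \cite{Rob55a}, in the spirit of the proof of Lemma~\ref{mainLem3} — to the effect that any factor $r(M_l^\tau(\vec\alpha))$ with $n(M_l^\tau(\vec\alpha))$ large is itself $<\epsilon$, together with a counting argument (boundedly many cycle-types, each of bounded multiplicity, forces some cycle of $\sigma$ to be long) that reduces the remaining configurations to ones where $\ord(\vec\alpha)$, and hence $\ord(\bar{\vec\alpha})$, is so large that the lower bound on $|\C_{\N_{\Aut(S^n)}(H)}(\vec\alpha)|$ from $\langle\vec\alpha\rangle\,\C_{S^n}(\vec\alpha)$ and the constraint $|\bar H|\ge\ord(\bar{\vec\alpha})$ on the index combine to drive $\maol(H)$ down, exactly as for $n=1$. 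I expect step (ii) — in particular the mutual calibration of the threshold on $n$ coming from the multinomial concentration with the one coming from the index/centralizer estimate, uniformly in $f$ — to be the main obstacle; the inputs themselves, namely Propositions~\ref{hartleyProp} and~\ref{boundsProp}, Lemmas~\ref{mainLem2} and~\ref{mainLem3}, and \cite{Ros75a}, are already in place.
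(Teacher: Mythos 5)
Your step (i) — the estimate $\rho(c)\le cf/(g(p^{f/g}-1))$ when the field component of $c$ has order $g$, and its use via Lemmas~\ref{mainLem2}(2) and \ref{mainLem3}(2) to make the whole product small as soon as some $\bcpc_\zeta(\vec\alpha)$ has field order $\le f/K$ — is exactly the paper's Subcase~(a), derived from Propositions~\ref{hartleyProp} and \ref{boundsProp} in the same way. Your $n=1$ argument (balancing $[\N_{\Aut(S)}(H):H]\le cf/g^*$ against $|\langle\alpha\rangle\C_S(\alpha)|\gtrsim g^*(p^{f/g^*}-1)$) is likewise sound, since there a single field order $g^*$ controls both sides.

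Step (ii), however, has a genuine gap, and it is precisely the point the paper resolves by a different mechanism. After you reduce to the case where every $\bcpc_\zeta(\vec\alpha)$ has field order $>f/K(\epsilon)$, neither of your two tools closes the argument. Multinomial concentration is powerless when every $n(M_l^\tau(\vec\alpha))$ is small: take $\sigma$ a single $n$-cycle and $\alpha_1=\dots=\alpha_n=\phi$ a field automorphism of full order $f$ with $\gcd(n,f)=1$; then there is a single factor $r(M_n^\tau(\vec\alpha))=\rho\bigl((\phi^n)^{\Aut(S)}\bigr)$, and one checks from Proposition~\ref{hartleyProp} that this $\rho$ is of order $1/|X_r(p)|$, a constant bounded away from $0$ independently of $f$ and $n$. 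And the $n=1$ index/centralizer balance does not transport to $n>1$: for $n>1$ the index $[\Aut(H):H]$ can be as large as roughly $|\Out(S)|^n n!/|\bar H|$, the constraint $|\bar H|\ge\ord(\bar{\vec\alpha})$ only gives $|\bar H|\gtrsim\ord(\sigma)$, and in the example above $\C_{S^n}(\vec\alpha)\cong\C_S(\phi^n)$ stays of bounded order as $f\to\infty$, so the $|\Out(S)|^{n-1}n!$ excess in the index is not compensated. You flag this calibration as ``the main obstacle''; it is not a matter of bookkeeping but of a missing idea. What the paper does instead is to introduce $\CT(\vec\alpha)\subseteq\Phi_S\Gamma_S$ (Definition~\ref{coarseSetDef}) and to prove (Lemma~\ref{coarseSetLem}) that it is an $\Aut(H)$-orbit invariant with $\CT(\vec\alpha^k)=\{t^k:t\in\CT(\vec\alpha)\}$ for $k$ coprime to $\ord(\sigma)$. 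In the hard case $\CT(\vec\alpha)$ has size $\le C_1(\epsilon)$ and all its elements have order $\ge f/C_2(\epsilon,r)$, so the stabilizer of $\CT(\vec\alpha)\cap\langle\tau_\zeta\rangle$ in $(\IZ/O\IZ)^{\ast}$ (with $\tau_\zeta$ of maximal order $O$) has order $\le C_1$, and one obtains $\ge\phi(f/C_2)/C_1$ coprime powers $\vec\alpha^{k_i}$ lying in pairwise distinct $\Aut(H)$-orbits of the same length; hence $|\vec\alpha^{\Aut(H)}|/|H|\le C_1/\phi(f/C_2)\to0$, with no appeal to multinomial concentration, no centralizer estimate, and no dependence on the cycle structure of $\sigma$. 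That orbit-counting device is what your step (ii) lacks; the rest of your framework (the case ``$|\CT(\vec\alpha)|$ large'' forcing many factors $\le\frac{18}{19}$, and Subcase~(a)) then assembles as in the paper.
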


For proving Proposition \ref{fProp}, we introduce one more concept and formulate a simple lemma concerning this concept.

\begin{deffinition}\label{coarseDef}
Let $S$ be a nonabelian finite simple group, $c$ a conjugacy class in $\Aut(S)$, $\pi:\Aut(S)\rightarrow\Aut(S)/\Inndiag(S)\cong\Phi_S\Gamma_S$ the canonical projection.

\begin{enumerate}
\item The term \emph{coarse $S$-type} is a synonym for \enquote{element of $\Aut(S)/\Inndiag(S)$}.
\item We call the unique element of the $\Aut(S)/\Inndiag(S)$-conjugacy class $\pi[c]$ the \emph{coarse $S$-type of $c$}.
\end{enumerate}
\end{deffinition}

In Definition \ref{coarseDef}(2), we are using that by \cite[Theorem 2.5.12(e), p.~58]{GLS98a}, the group $\Aut(S)/\Inndiag(S)$ is always abelian. Note that if two conjugacy classes in $\Aut(S)$ have the same $S$-type in the sense of Definition \ref{typeDef}(2), then they also have the same coarse $S$-type, but not vice versa.

\begin{deffinition}\label{coarseSetDef}
Let $S$ be a nonabelian finite simple group, $n\in\IN^+$, let $\vec{\alpha}=(\alpha_1,\ldots,\alpha_n)\sigma\in\Aut(S)\wr\Sym_n$, and let $\pi:\Aut(S)\rightarrow\Aut(S)/\Inndiag(S)$ be the canonical projection. We define $\CT(\vec{\alpha})$ as the following set (\emph{not} multiset):

\[
\CT(\vec{\alpha}):=\{\tau\in\Aut(S)/\Inndiag(S)\mid\tau\in\pi[\bcpc_{\zeta}(\vec{\alpha})]\text{ for some cycle }\zeta\text{ of }\sigma\}.
\]
\end{deffinition}

Let us now turn to the aforementioned lemma; in its second statement, we write the group $\Aut(S)/\Inndiag(S)$ multiplicatively, although it is abelian.

\begin{lemmma}\label{coarseSetLem}
Let $S$ be a nonabelian finite simple group, $n\in\IN^+$, $H$ a finite semisimple group with $\Soc(H)\cong S^n$ and $\vec{\alpha},\vec{\beta}\in H$.

\begin{enumerate}
\item If $\vec{\alpha}$ and $\vec{\beta}$ lie in the same $\Aut(H)$-orbit, then $\CT(\vec{\alpha})=\CT(\vec{\beta})$.
\item Let $\sigma$ be the permutation part of $\vec{\alpha}$, and let $k\in\IZ$ with $\gcd(k,\ord(\sigma))=1$. Then $\CT(\vec{\alpha}^k)=\{t^k\mid t\in\CT(\vec{\alpha})\}$.
\end{enumerate}
\end{lemmma}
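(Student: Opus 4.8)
The plan is to reduce both statements to the combinatorial characterization of conjugacy in wreath products from Lemma \ref{mainLem2}(1), after first clarifying how $\Aut(H)$ acts on $H$ and how $\CT$ interacts with that action. For part (1), recall that $H$ is semisimple with $\Soc(H)\cong S^n$, so $S^n\leq H\leq\Aut(S^n)=\Aut(S)\wr\Sym_n$, and by the Rosenberg-type fact cited in Subsection \ref{subsec1P3} (see \cite[Lemma 1.1]{Ros75a}), every automorphism of $H$ is realized by conjugation inside $\Aut(\Soc(H))=\Aut(S)\wr\Sym_n$ by an element of $\N_{\Aut(S)\wr\Sym_n}(H)$. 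Hence if $\vec\beta$ lies in the $\Aut(H)$-orbit of $\vec\alpha$, then $\vec\beta$ is $(\Aut(S)\wr\Sym_n)$-conjugate to $\vec\alpha$, and it suffices to show that $\CT$ is invariant under $(\Aut(S)\wr\Sym_n)$-conjugation. This in turn follows from two observations already present in the excerpt: first, $\bcpc_\zeta(\vec\alpha^{\vec k})$ ranges, as $\vec k$ varies, over a set of $\Aut(S)$-classes depending only on the original $\bcpc$-data (indeed, by the proof of Lemma \ref{mainLem2}(1), conjugation by a tuple part preserves each $\bcpc_\zeta$, and conjugation by a permutation part just permutes cycles of a common length among themselves while carrying $\bcpc_\zeta$ to $\bcpc_{\zeta^\psi}$); second, since $\Aut(S)/\Inndiag(S)$ is abelian, the image $\pi[\bcpc_\zeta(\vec\alpha)]$ is a single element of $\Aut(S)/\Inndiag(S)$, namely the coarse $S$-type of that class. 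Putting these together, the \emph{set} of coarse types occurring among the $\bcpc_\zeta(\vec\alpha)$ is unchanged, i.e.\ $\CT(\vec\alpha)=\CT(\vec\beta)$.

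For part (2), fix $\vec\alpha=(\alpha_1,\dots,\alpha_n)\sigma$ and $k$ with $\gcd(k,\ord(\sigma))=1$. I would compute $\bcpc_\zeta(\vec\alpha^k)$ directly in terms of the $\bcpc$-data of $\vec\alpha$. Write $\vec\alpha^k=(\beta_1,\dots,\beta_n)\sigma^k$; the tuple entries $\beta_i$ are explicit products of the $\alpha_j$ along $\sigma$-orbits. The key point is that $\sigma^k$ has the same cycle type as $\sigma$ (as $\gcd(k,\ord(\sigma))=1$), and moreover each cycle $\zeta$ of $\sigma$ of length $l$ gives rise, under the power map, to a single cycle $\zeta'$ of $\sigma^k$ of the same length $l$ (supported on the same points), with the property that the backward cycle product of $\vec\alpha^k$ along $\zeta'$ is conjugate to the $k$-th power (in $\Aut(S)$) of the backward cycle product of $\vec\alpha$ along $\zeta$. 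Concretely, if $\zeta=(i_1,\dots,i_l)$ and $g=\alpha_{i_l}\alpha_{i_{l-1}}\cdots\alpha_{i_1}$ is a representative of $\bcpc_\zeta(\vec\alpha)$, then a short index-chasing computation (using that $\vec\alpha^k$ restricted to the support of $\zeta$ is again a single cycle permutation with the appropriate tuple part, since $\gcd(k,l)=1$) shows that $\bcpc_{\zeta'}(\vec\alpha^k)=(g^k)^{\Aut(S)}$, up to relabeling the starting point of the cycle, which only conjugates $g$ anyway. Applying $\pi$ and using that $\pi$ is a homomorphism to the abelian group $\Aut(S)/\Inndiag(S)$, we get that the coarse type of $\bcpc_{\zeta'}(\vec\alpha^k)$ is $t^k$ where $t$ is the coarse type of $\bcpc_\zeta(\vec\alpha)$. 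Since $\zeta\mapsto\zeta'$ is a bijection between the cycles of $\sigma$ and those of $\sigma^k$, taking the set of coarse types over all cycles yields $\CT(\vec\alpha^k)=\{t^k\mid t\in\CT(\vec\alpha)\}$.

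I expect the main technical obstacle to be the index bookkeeping in part (2): verifying cleanly that a length-$l$ cycle $\zeta$ of $\sigma$ produces exactly one length-$l$ cycle of $\sigma^k$ (needing $\gcd(k,l)=1$, which follows from $\gcd(k,\ord(\sigma))=1$ since $l\mid\ord(\sigma)$) and that the associated backward cycle product is, up to $\Aut(S)$-conjugacy, the $k$-th power of the original one. The cleanest route is probably to first reduce to a single cycle (as in the proof of Lemma \ref{mainLem2}(1), conjugation and powering both respect the cycle decomposition), so that $\sigma$ is an $l$-cycle and one is computing $((\alpha_1,\dots,\alpha_l)\sigma)^k$ explicitly; then the backward cycle product identity becomes a finite product manipulation. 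Once that local computation is in hand, the global statement and the reduction of part (1) to conjugacy in the wreath product are routine.
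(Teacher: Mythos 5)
Your proof is correct and part (1) coincides with the paper's: both reduce to Lemma \ref{mainLem2}(1) via the identification of $\Aut(H)$ with $\N_{\Aut(S)\wr\Sym_n}(H)$. For part (2), however, you take a somewhat heavier route: you establish the stronger intermediate claim that $\bcpc_{\zeta'}(\vec\alpha^k)$ is the $\Aut(S)$-conjugacy class of $g^k$, where $g$ is a representative of $\bcpc_\zeta(\vec\alpha)$, and only then project to $\Aut(S)/\Inndiag(S)$. That claim is true (it follows from the identity $(w^k)^l=(w^l)^k$ in the single-cycle wreath product combined with the standard fact that the tuple entries of $w^l$ are $G$-conjugate to the backward cycle product representative), but it requires the careful index-chasing you flag as the main obstacle. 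The paper instead works entirely in the abelian quotient from the start: it simply observes that among the tuple entries of $\vec\alpha^k$ supported on $\supp(\zeta)$, each $\alpha_{i_j}$ with $i_j\in\supp(\zeta)$ formally occurs exactly $k$ times as a factor, so that after applying the projection $\pi$ and using commutativity of $\Aut(S)/\Inndiag(S)$ the resulting coarse type is $\bigl(\prod_j\pi(\alpha_{i_j})\bigr)^k$, i.e.\ the $k$-th power of the original coarse type, with no need to keep track of the order of the factors or to exhibit an actual conjugating element in $\Aut(S)$. So the paper's counting argument is lighter and avoids the bookkeeping, while your version proves more than what is needed; both are valid.
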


\begin{proof}
For (1): This is clear from Lemma \ref{mainLem2}(1).

For (2): Write $\vec{\alpha}=(\alpha_1,\ldots,\alpha_n)\sigma$. The permutation part $\sigma^k$ of $\vec{\alpha}^k$ has the same cycle support sets (in particular the same cycle type) as $\sigma$, and for each cycle $\zeta=(i_1,\ldots,i_l)$ of $\vec{\alpha}$, the entries of the tuple part of $\vec{\alpha}^k$ corresponding to indices from $\supp(\zeta)=\{i_1,\ldots,i_l\}=\supp(\zeta^k)$ are $k$-factor products of such entries of $(\alpha_1,\ldots,\alpha_n)$. Moreover, by symmetry, each entry $\alpha_{i_j}$, $j\in\{1,\ldots,l\}$, formally occurs $k$ times in total as a factor in one of the entries of the tuple part of $\vec{\alpha}^k$ corresponding to an index from $\supp(\zeta)=\supp(\zeta^k)$. The assertion now follows by the commutativity of the group $\Aut(S)/\Inndiag(S)$.
\end{proof}

We are now ready to prove Proposition \ref{fProp}.

\begin{proof}[Proof of Proposition \ref{fProp}]
Let $H$ be a finite semisimple group with $\Soc(H)$ a power of $\leftidx{^t}X_r(p^{ft})=:S$, say $\Soc(H)\cong S^n$. We show that for each $\epsilon>0$, if $f$ is large enough (where this notion of \enquote{large enough} may depend on $\epsilon$, $r$ and $p$, but not on $n$), then $\maol(H)<\epsilon$. Let $\vec{\alpha}=(\alpha_1,\ldots,\alpha_n)\sigma\in H$. We make a case distinction:

\begin{enumerate}
\item Case: $|\CT(\vec{\alpha})|>\log_{18/19}(\epsilon)$. Then, using that in the product upper bound on $\frac{1}{|H|}|\vec{\alpha}^{\Aut(H)}|$ given in Lemma \ref{mainLem2}(2), the number of factors is bounded from below by $|\CT(\vec{\alpha})|$ and that by Lemmas \ref{mainLem3}(2) and \ref{mainLem4}(1), each of these factors is at most $\frac{18}{19}$, we conclude that $\frac{1}{|H|}|\vec{\alpha}^{\Aut(H)}|\leq(\frac{18}{19})^{|\CT(\vec{\alpha})|}<\epsilon$.
\item Case: $|\CT(\vec{\alpha})|\leq\log_{18/19}(\epsilon)$. We make a subcase distinction:

\begin{enumerate}
\item Subcase: There is a cycle $\zeta$ of $\sigma$ such that the common field (resp.~graph-field, if $\leftidx{^t}X_r$ is one of $B_2$, $G_2$ or $F_4$) component order $g_{\zeta}$ of any of the elements of the $\Aut(S)$-conjugacy class $\bcpc_{\zeta}(\vec{\alpha})$ satisfies $\frac{f}{g_{\zeta}}>5\frac{\sqrt{2}}{\sqrt{2}-1}(r+1)\epsilon^{-1}$. Let $l$ be the length of $\zeta$, and let $\tau$ be the $S$-type of $\bcpc_{\zeta}(\vec{\alpha})$. Then by Lemma \ref{mainLem2}(2), $\frac{1}{|H|}|\vec{\alpha}^{\Aut(H)}|\leq r(M_l^{\tau}(\vec{\alpha}))$, which by Lemma \ref{mainLem3}(2) and using the notation from Definition \ref{rDef}(4) is bounded from above by the maximum of the $\rho(c_i^{(\tau)})$, $i=1,\ldots,k(\tau)$. Now fix $i\in\{1,\ldots,k(\tau)\}$, and let $\beta\in c_i^{(\tau)}$. Since $|\Out(S)|=|\Out(\leftidx{^t}X_r(p^{ft}))|\leq 5(r+1)f$, we are done if we can show that $|\C_{\Aut(S)}(\beta)|>5(r+1)f\epsilon^{-1}$. But by Propositions \ref{hartleyProp} and \ref{boundsProp}, we have

\begin{align*}
|\C_{\Aut(S))}(\beta)| &\geq g_{\zeta}\cdot(p^{f/g_{\zeta}}-1)\geq g_{\zeta}\cdot\frac{\sqrt{2}-1}{\sqrt{2}}p^{f/g_{\zeta}}\geq \frac{\sqrt{2}-1}{\sqrt{2}}g_{\zeta}(\frac{f}{g_{\zeta}})^2 \\
&=\frac{\sqrt{2}-1}{\sqrt{2}}f\cdot\frac{f}{g_{\zeta}}>5(r+1)f\epsilon^{-1},
\end{align*}

as required.

\item Subcase: For every cycle $\zeta$ of $\sigma$, the common field (resp.~graph-field) component order $g_{\zeta}$ of any of the elements of $\bcpc_{\zeta}(\vec{\alpha})$ satisfies $g_{\zeta}\geq\frac{f}{5\frac{\sqrt{2}}{\sqrt{2}-1}(r+1)\epsilon^{-1}}$. Observe that $g_{\zeta}\mid\ord(\tau_{\zeta})\mid o_{\zeta}\mid\ord(\vec{\alpha})$, where $\tau_{\zeta}$ is the coarse $S$-type of $\bcpc_{\zeta}(\vec{\alpha})$, and $o_{\zeta}$ is the common order of any of the elements of $\bcpc_{\zeta}(\vec{\alpha})$; for the first divisibility relation, use \cite[Theorem 2.5.12(e), p.~58]{GLS98a}, and for the last, note that if $l$ is the length of $\zeta$, then the tuple part of $\vec{\alpha}^l$ has elements from $\bcpc_{\zeta}(\vec{\alpha})$ at the entries corresponding to indices from $\supp(\zeta)$, while its permutation part fixes all such indices, whence $o_{\zeta}\mid\ord(\vec{\alpha}^l)\mid\ord(\vec{\alpha})$. In particular, by the case and subcase assumptions, $\CT(\vec{\alpha})$ is a subset of the field-graph automorphism group $\Phi_S\Gamma_S$ whose size is bounded from above by $\log_{18/19}(\epsilon)=:C_1(\epsilon)$ and whose elements all have order at least $\frac{f}{5\frac{\sqrt{2}}{\sqrt{2}-1}(r+1)\epsilon^{-1}}=:\frac{f}{C_2(\epsilon,r)}$. Define $O$ to be the maximum value of $\ord(\tau_{\zeta})$, where $\zeta$ ranges over the cycles of $\sigma$, and fix a cycle $\zeta$ of $\sigma$ such that $\ord(\tau_{\zeta})=O$. Set $M:=\CT(\vec{\alpha})\cap\langle\tau_{\zeta}\rangle$. Consider the natural action of the group of units $(\IZ/O\IZ)^{\ast}$ on the subsets of $\langle\tau_{\zeta}\rangle\cong\IZ/O\IZ$. Since $M$ contains the generator $\tau_{\zeta}$, the stabilizer of $M$ under this action has order at most $|M|\leq C_1(\epsilon)$. Fix a transversal $u_1,\ldots,u_m$ of $\Stab_{(\IZ/O\IZ)^{\ast}}(M)$ in $(\IZ/O\IZ)^{\ast}$, where $m\geq\frac{\phi(f/C_2(\epsilon,r))}{C_1(\epsilon)}$. For $i=1,\ldots,m$, let $k_i$ be a lift of $u_i$ to $(\IZ/\lcm(\ord(\vec{\alpha},\exp(\Phi_S\Gamma_S))))^{\ast}$. Then by Lemma \ref{coarseSetLem}(2), for each $i=1,\ldots,m$, $\CT(\vec{\alpha}^{k_i})\cap\langle\tau_{\zeta}\rangle=\{\tau^{k_i}\mid \tau\in M\}$, and so the sets $\CT(\vec{\alpha}^{k_i})$, $i=1,\ldots,m$, are pairwise distinct. By Lemma \ref{coarseSetLem}(1), the powers $\vec{\alpha}^{k_i}$, $i=1,\ldots,m$, therefore lie in pairwise distinct automorphism orbits in $H$, which are all of the same length as $\vec{\alpha}^{\Aut(H)}$. It follows that if $f$ is so large (in dependence of $\epsilon$ and $r$) that $\frac{\phi(f/C_2(\epsilon,r))}{C_1(\epsilon)}>\epsilon^{-1}$, then $\frac{1}{|H|}|\vec{\alpha}^{\Aut(H)}|\leq\frac{1}{m}<\epsilon$, as required.
\end{enumerate}
\end{enumerate}
\end{proof}

\subsection{Proof of Theorem \ref{mainTheo}(3)}\label{subsec3P3}

We claim that any number in $\left(0,1\right]$ that is strictly smaller than $\maol(\Aut(S))$ does the job as a choice for $c(S)$; so one could, for example, choose $c(S):=\frac{\maol(\Aut(S))}{2}$.

To see that this holds, consider, for each prime $p$, the finite semisimple group $H_p:=\Aut(S)^p\rtimes\langle\sigma\rangle=\Aut(S)\wr\langle\sigma\rangle$, where $\sigma\in\Sym_p$ is a $p$-cycle. We claim that $\maol(H_p)\geq(1-\frac{1}{p})\maol(\Aut(S))$, which converges to $\maol(\Aut(S))$ from below as $p\to\infty$ (so this is sufficient).

In order to verify this bound, observe that $\Aut(H_p)=\Aut(S)\wr\N_{\Sym_p}(\langle\sigma\rangle)$. Consider the element $\vec{\alpha}:=(\alpha_1,\id_S,\ldots,\id_S)\sigma$, where $\alpha_1\in\Aut(S)$ is chosen such that its automorphism orbit in $\Aut(S)$ is of the maximum possible proportion $\maol(\Aut(S))$. Since $\N_{\Sym_p}(\langle\sigma\rangle)$ acts transitively on the set of nontrivial elements of $\langle\sigma\rangle$, we find that $\vec{\alpha}^{\Aut(H_p)}$ contains elements with all nontrivial elements of $\langle\sigma\rangle$ as possible permutation parts. Therefore, $|\vec{\alpha}^{\Aut(H_p)}|=(p-1)\cdot|\vec{\alpha}^{\Aut(H_p)}\cap\Aut(S)^p\sigma|$, and since $\langle\sigma\rangle$ is a self-centralizing subgroup of $\Sym_p$, the stabilizer of the set $\vec{\alpha}^{\Aut(H_p)}\cap\Aut(S)^p\sigma$ in $\Aut(S^p)=\Aut(S)\wr\Sym_p$ is just $H_p$, which is contained in $\Aut(H_p)$. Hence $\vec{\alpha}^{\Aut(H_p)}\cap\Aut(S)^p\sigma=\vec{\alpha}^{\Aut(S^p)}\cap\Aut(S)^p$, which by Lemma \ref{mainLem2}(1) consists of just those elements $(\beta_1,\ldots,\beta_p)\sigma\in\Aut(S^p)$ such that $\beta_p\cdots\beta_1\in\bcpc_{\sigma}(\vec{\alpha})=\alpha_1^{\Aut(S)}$. Hence $|\vec{\alpha}^{\Aut(H_p)}|=(p-1)\cdot|\alpha_1^{\Aut(S)}||\Aut(S)|^{p-1}$, whence $\vec{\alpha}^{\Aut(H_p)}$ is of proportion $(1-\frac{1}{p})\maol(\Aut(S))$ in $H_p$, as required.\qed

\section{Concluding remarks}\label{sec4}

We conclude this paper with some related open questions and problems for further research. Probably the most natural thing to ask in light of our main results is whether the constant $\frac{18}{19}$ in Theorem \ref{mainTheo}(1) can be reduced and, more strongly, what the optimal value for it is. In this context, we note without proof the following result, which was checked by the author with a combination of GAP \cite{GAP4} computations, looking up information in the ATLAS of Finite Group Representations \cite{ATLAS} and using a few ad hoc arguments:

\begin{proposition}\label{smallExProp}
The following hold:

\begin{enumerate}
\item Let $H$ be a nontrivial finite semisimple group such that $\Soc(H)$ is characteristically simple and $|\Soc(H)|\leq10^5$. Then $\maol(H)\leq\frac{3}{7}$.
\item Let $G$ be a finite nonsolvable group with $|G|\leq10^5$. Then $\maol(G)\leq\frac{3}{7}=\maol(\PSL_2(8))$.
\end{enumerate}
\end{proposition}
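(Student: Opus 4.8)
\medskip

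\noindent\emph{Sketch of a proof.} The plan is to deduce (2) from (1) and to reduce (1), via Lemmas~\ref{mainLem1}--\ref{mainLem4}, to a finite verification plus one explicit computation; a brute-force search through all nonsolvable groups of order $\leq10^5$ would be infeasible, and the reduction is what makes the problem tractable. For the passage from (2) to (1): let $G$ be finite nonsolvable with $|G|\leq10^5$ and let $S$ be a nonabelian composition factor of $G$. By Lemma~\ref{mainLem1}(2), $G$ has a semisimple characteristic quotient $H$ with $\Soc(H)\cong S^n$, so $\Soc(H)$ is characteristically simple with $|\Soc(H)|\leq|H|\leq|G|\leq10^5$, and $\maol(G)\leq\maol(H)$ by Lemma~\ref{mainLem1}(1). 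Granting (1), and granting $\maol(\PSL_2(8))=\tfrac37$ (shown below) together with the fact that $\PSL_2(8)$ is nonsolvable of order $504\leq10^5$, this yields (2).

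\emph{Reduction of (1).} Let $H$ be as in (1), say $\Soc(H)\cong S^n$; then $S^n\leq H\leq\Aut(S^n)=\Aut(S)\wr\Sym_n$, every such $H$ has socle exactly $S^n$, and these $H$ are in bijection with the subgroups of $\Out(S^n)=\Out(S)\wr\Sym_n$. Combining Lemma~\ref{mainLem2}(2) with Lemma~\ref{mainLem3}(2) gives the crucial bound $\maol(H)\leq\h(S)$ (the product in Lemma~\ref{mainLem2}(2) has at least one factor, and each factor is at most $\max_c\rho(c)\leq\h(S)$), so there is nothing to prove as soon as $\h(S)\leq\tfrac37$. Since $|S|^n=|\Soc(H)|\leq10^5$ and $|S|\geq60$, one has $n\leq2$, with $n=2$ possible only for $S\in\{\Alt_5,\PSL_2(7)\}$; and $S$ ranges over the finitely many nonabelian simple groups of order $\leq10^5$. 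For these, the estimates in the proof of Lemma~\ref{mainLem4} immediately give $\h(S)\leq\tfrac25<\tfrac37$ for all alternating groups $\Alt_m$ with $m\geq7$ and for the two sporadic groups $M_{11},M_{12}$; for the remaining small groups --- $\Alt_5$, $\Alt_6$, the $\PSL_2(q)$ with $|\PSL_2(q)|\leq10^5$, and $\PSL_3(3),\PSL_3(4),\PSU_3(3),\PSU_3(4),\PSU_4(2),\leftidx{^2}B_2(8)$ --- I would compute $\h(S)$ directly (for these the clean Lemma~\ref{mainLem4} bounds, e.g.\ $\tfrac34$ for $\PSL_2(q)$, do not beat $\tfrac37$). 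This shows $\h(S)\leq\tfrac37$ for all of them except a very short list (certainly containing $\Alt_5$, with $\h=\tfrac12$, and $\Alt_6$, with $\h=\tfrac34$), and in particular $\h(\PSL_2(8))=\tfrac37$ exactly.

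\emph{The remaining cases and sharpness.} It remains to treat the finitely many pairs $(S,n)$ not yet disposed of: the few small $S$ with $\h(S)>\tfrac37$, and additionally $(S,n)=(\Alt_5,2)$ --- the pair $(\PSL_2(7),2)$ needs no separate treatment, since $\h(\PSL_2(7))=\tfrac13\leq\tfrac37$. For each such $(S,n)$ the groups $H$ with $S^n\leq H\leq\Aut(S^n)$ and $|\Soc(H)|\leq10^5$ are finitely many and of modest order, and since $\Aut(H)\cong\N_{\Aut(S^n)}(H)$ by \cite[Lemma~1.1]{Ros75a}, each $\maol(H)$ is a finite-group computation; I would run these in GAP~\cite{GAP4} (using the ATLAS~\cite{ATLAS} for $\Aut(\Alt_6)$ and for centralizer data), verifying $\maol(H)\leq\tfrac37$ in every instance. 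Finally, for the extremal group: $\Aut(\PSL_2(8))=\PSL_2(8)\rtimes\langle\phi\rangle$, where $\phi$ is the Frobenius $x\mapsto x^2$ of order $3$; $\phi$ cyclically permutes the three $\PSL_2(8)$-conjugacy classes of order-$7$ elements, each of size $72$, which therefore fuse into a single $\Aut(\PSL_2(8))$-orbit of size $216=\tfrac37\cdot504$, while every other orbit of $\Aut(\PSL_2(8))$ on $\PSL_2(8)$ is strictly smaller (the nontrivial ones have sizes $56$, $63$, $168$, $216$). Hence $\MAOL(\PSL_2(8))=216$ and $\maol(\PSL_2(8))=\tfrac37$.

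\emph{Main obstacle.} The delicate part is, first, turning the qualitative content of Lemma~\ref{mainLem4} into a provably finite, fully explicit list of the $S$ of order $\leq10^5$ with $\h(S)>\tfrac37$ --- which forces one to examine all the small Lie type groups by machine, as their generic bounds do not drop below $\tfrac37$ --- and, second, the $n=2$ computations for the groups $H$ between $\Alt_5^2$ and $\Sym_5\wr\Sym_2$: there $\h(\Alt_5)=\tfrac12$, so $\maol(H)\leq\h(S)$ is useless, and one instead inspects the backward cycle products $\bcpc_\zeta(\vec\alpha)$ (those taken along a $2$-cycle of the permutation part land in $\Alt_5$, so the corresponding $\rho$-values stay $\leq\tfrac25$) or simply computes $\maol(H)$ outright.
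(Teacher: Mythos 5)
The paper states Proposition~\ref{smallExProp} explicitly ``without proof,'' saying only that it ``was checked by the author with a combination of GAP computations, looking up information in the ATLAS of Finite Group Representations and using a few ad hoc arguments.'' There is therefore no paper proof to compare against. Your reduction is sound in outline: (2) follows from (1) via Lemma~\ref{mainLem1}(1,2); combining Lemma~\ref{mainLem2}(2) with Lemma~\ref{mainLem3}(2) does give $\maol(H)\leq\h(S)$ (the product has at least one factor, each at most $\h(S)\leq 1$); the constraint $n\leq 2$ with $n=2$ only for $S\in\{\Alt_5,\PSL_2(7)\}$ is correct; and the $\Aut(H)\cong\N_{\Aut(S^n)}(H)$ reduction via \cite[Lemma~1.1]{Ros75a} is exactly what makes the $n\geq 2$ cases a finite computation. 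Your verification that $\maol(\PSL_2(8))=\tfrac{3}{7}$ via fusion of the three order-$7$ classes is correct.

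However, there is a conceptual error in how you estimate $\h$. You claim ``$\h(\PSL_2(8))=\tfrac{3}{7}$ exactly,'' but in fact $\h(\PSL_2(8))=\tfrac{1}{2}$: taking $\alpha=s\phi$ with $\phi$ the field automorphism of order $3$ and choosing $s$ so that, in the notation of Proposition~\ref{hartleyProp}(1), $\alpha^3$ corresponds to an involution of $\PGL_2(2)\cong\Sym_3$, one gets $|\C_{\PSL_2(8)}(\alpha)|=2$, hence $|\C_{\Aut(S)}(\alpha)|=6$ and a class of size $1512/6=252=|S|/2$ contained in a single $S$-coset. (What equals $\tfrac{3}{7}$ is $\maol(\PSL_2(8))$, i.e.\ the maximum orbit proportion inside the trivial coset $S$.) The same phenomenon makes $\h(\PSL_2(q))\geq\tfrac12$ for many prime powers $q=p^f$ with $f>1$ in your range, so the ``very short list'' of exceptional $S$ with $\h(S)>\tfrac37$ is considerably longer than $\{\Alt_5,\Alt_6\}$, and the burden of direct computation in your step~4 is correspondingly heavier. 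Separately, the parenthetical in your closing paragraph is false: for $H=\Sym_5\wr\Sym_2$ (whose socle is $\Alt_5^2$), an element $\vec\alpha=(\alpha_1,\alpha_2)\sigma$ with $\sigma$ the transposition and $\alpha_1$ odd, $\alpha_2$ even has $\bcpc_\sigma(\vec\alpha)\notin\Alt_5$, and since the $4$-cycle class in $\Sym_5$ has $\rho=\tfrac12>\tfrac37$, the backward-cycle-product observation alone does not dispose of the $(\Alt_5,2)$ case. Your proposed fallback of computing $\maol(H)$ directly rescues both defects, so the plan is salvageable, but as written it both misstates $\h(\PSL_2(8))$ and underestimates the number of cases requiring the fallback.
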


This motivates the following question, a positive answer to which would settle the search for the optimal constant in Theorem \ref{mainTheo}(1):

\begin{question}\label{ques1}
Is it true that $\maol(G)\leq\frac{3}{7}$ for all finite nonsolvable groups $G$?
\end{question}

Concerning the second open question, we note that while the examples which we gave in Remark \ref{mainRem}(3) show that commutativity of $G$ cannot be forced under an assumption on $G$ of the form $\maol(G)\geq\rho$ with $\rho\in\left(0,1\right)$, they are all nilpotent, and the author has not yet found a sequence of nonnilpotent finite groups whose $\maol$-values converge to $1$. So the following may be worth looking into:

\begin{question}\label{ques2}
Does there exist a constant $c\in\left(0,1\right)$ such that $\maol(G)\leq c$ for all finite nonnilpotent groups $G$?
\end{question}

We would like to propose one more question, which is motivated by the connection, mentioned in Remark \ref{mainRem}(2), between the results of this paper and \cite[Theorem 1.1.1]{Bor17a}. Consider the following constants:

\begin{notation}\label{cNot}
For $k\in\IN^+$, we set

\[
c_k:=\sup_{G\text{ fin.~nonsolvable gp.}}{\frac{1}{|G|}\max_{\alpha_1,\ldots,\alpha_k\in\Aut(G)}\max_{g\in G}{|g^{\langle\alpha_1,\ldots,\alpha_k\rangle}|}}.
\]
\end{notation}

So $c_k$ is a constant upper bound on the orbit proportions on finite nonsolvable groups that one can achieve by $k$-generated subgroups of the automorphism group. This has some relevance for the study of finite groups for use in pseudorandom number generation, as outlined in \cite[Subsection 4.1]{Bor16a}. In the classical setting, a pseudorandom number generator consists of the iteration of one fixed function on a state space, but one can also consider \enquote{random walk generators}, where in each step, a transformation randomly chosen from a fixed set of candidates is applied. The basic requirement that the orbits on the state space obtained this way should be \enquote{large} remains unchanged though, and our results show that if one wants orbits of proportion larger than $\frac{18}{19}$, achieved by randomly applying one of $k$ given automorphisms of a finite group $G$ starting from some element $g\in G$ as the seed, one must use a solvable $G$.

It is natural to ask if the constant $\frac{18}{19}$ can be improved on in this context as well. Clearly, the sequence $(c_k)_{k\in\IN^+}$ is non-decreasing, and since, by \cite[Theorem 1]{DL95a}, $\Aut(\PSL_2(8))$ is $2$-generated, we have $c_2\geq\frac{3}{7}$, so if the answer to Question \ref{ques1} is \enquote{yes}, then $c_k=\frac{3}{7}$ for all $k\geq2$, but if the answer is \enquote{no}, the following problem may also be worth studying:

\begin{problem}\label{problem1}
Determine the precise value of (or at least give better bounds on) $c_k$ for some given $k\geq 2$. Also, determine if the sequence $(c_k)_{k\in\IN^+}$ is eventually constant.
\end{problem}

\section{Acknowledgements}\label{sec5}

The author would like to thank Michael Giudici and Cheryl Praeger for some helpful discussions while he was working on this paper.

\end{document}